\documentclass[letterpaper,12pt]{amsart}
\usepackage[margin=1.19in]{geometry}
\usepackage{graphicx, epigraph}
\graphicspath{ {./images/} }
\usepackage{amsmath,amsthm,amssymb,mathtools,amsfonts,mathrsfs}
\usepackage{xspace,xcolor}
\usepackage[breaklinks,colorlinks,citecolor=teal,linkcolor=teal,urlcolor=teal,pagebackref,hyperindex]{hyperref}
\usepackage[alphabetic]{amsrefs}
\usepackage{tikz-cd}
\usepackage[all]{xy}
\usepackage{url}
\usepackage[shortlabels]{enumitem}

\newtheorem{thm}{Theorem}[section]
\newtheorem{prop}[thm]{Proposition}

\newtheorem{lem}[thm]{Lemma}

\theoremstyle{definition}
\newtheorem{defn}[thm]{Definition}
\newtheorem{ex}[thm]{Example}
\newtheorem{rmk}[thm]{Remark}

\newtheorem*{nota*}{Notation}


\newcommand{\C}{\mathbb C}

\newcommand{\Z}{\mathbb Z}

\newcommand{\PP}{\mathbb P}
\newcommand\cC{\mathcal C}
\newcommand{\vir}{\mathrm{vir}}
\newcommand{\bM}{\overline{\mathcal M}}
\newcommand{\bA}{\boldsymbol{A}}

\title[operator formalism for GW-invariants of cap and tube]
{A new approach to the operator formalism for Gromov-Witten invariants of the cap and tube}

\author{Ajith Urundolil Kumaran}
\email{au270@cam.ac.uk}
\author{Longting Wu}
\email{wult@sustech.edu.cn}

\begin{document}

\maketitle

\begin{abstract}
Based on Johnson's operator formula for the equivariant Gromov-Witten theory of $\PP^1$-orbifolds, we give a new approach to the operator formalism by Okounkov and Pandharipande regarding the $\C^*$-equivariant Gromov-Witten theory of $\PP^1$ relative to one or two points. We also
extend their operator formalism in the non-equivariant specialization to the case where we allow negative contact orders.
\end{abstract}


\section{Introduction}
\subsection{Overview}

Let $X$ be a smooth projective variety and $D$ be a smooth divisor in $X$. The relative Gromov-Witten (GW-) theory of the pair $(X,D)$, roughly speaking, (virtually) counts maps from domain curves to the target $X$ with certain tangency conditions along the divisor $D$. The theory was founded in \cites{LR, IP2, EGH} on the symplectic side and in \cites{Jun1,Jun2} on the algebraic side which plays an important role in the development of GW-theory.

The recent works \cites{ACW,JPPZ,JPPZ2,TY,FWY,FWY2} provide a new perspective on relative GW-theory. The new point of view can be summarized in one sentence: \emph{Relative GW-theory of the pair $(X,D)$ can be treated as a certain large $r$ limit of the orbifold GW-theory of the root stack $X_{D,r}$.} Under the guidance of this new perspective, the relative GW-theory has made great progress. For example, in \cites{FWY,FWY2}, relative GW-theory was enlarged to include negative contact and the enlarged theory was shown to form a partial CohFT (cohomological field theory). In addition, relative quantum cohomology rings and Givental formalism
were also constructed using the genus-zero invariants of the enlarged theory.

But in general, it is quite hard to explicitly calculate the large $r$ limit of the orbifold GW-theory of the root stack $X_{D,r}$, as it involves rather complicated combinatorics. This limits the application of the new perspective. In this paper, we want to make a first step towards explicit computations by considering the simplest pairings $(X,D)$, i.e., cap and tube. Here cap stands for $\PP^1$ relative to one point and tube stands for $\PP^1$ relative to two points. Even in these simple cases, the
explicit calculations of the orbifold GW-theory of $X_{D,r}$ in the large $r$ limit already give us some nontrivial results. For example, it provides a new approach to the operator formalism\footnote{The operator formalism is expressed as a vacuum expectation on the infinite wedge space.} for the $\C^*$-equivariant GW-theory of the cap in \cite{OP3}, which plays an important role in the proof of Virasoro constraints for target curves. The original approach by Okounkov and Pandharipande used a complicated induction while the new approach here is more straightforward. The new approach can also
be used to extend the operator formula for the non-equivariant tube theory in \cite{OP1} to include negative contact which can be further used to deduce an explicit formula for the one-point function of the tube with negative contact.

Our new approach to the cap and tube theories relies on some knowledge of the $\C^*$-equivariant GW-theory of the root stack $\mathcal{C}_{r,s}$. Here $\mathcal{C}_{r,s}$ is obtained by successively applying the $r$-th root and $s$-th root constructions along the divisors $0$ and $\infty$ respectively. Fortunately, the equivariant GW-theory of $\mathcal{C}_{r,s}$ has been studied in \cites{J1,J,MT1,MT2}. In particular, Paul Johnson in his PhD thesis \cite{J1} (see also \cite{J} for its publication version) gave an explicit operator formula for it. 
We then analyse the large $r,s$ behavior of Johnson's operator formula. After some efforts, it turns out that the operator formalism for the equivariant cap and tube theories naturally appear when we take suitable coefficients\footnote{In \cite{TY}, it was shown that orbifold GW-invariants of $X_{D,r}$ (without large ages) are polynomials of $r$ when $r$ become sufficiently large and the constant terms give the relative GW-invariants of $(X,D)$.} of Johnson's operator formula in the large $r,s$ limit. We also analyse Johnson's operator formula in the non-equivariant specialization. By taking the large $r,s$ limit,
we can extend the operator formalism for the non-equivariant tube theory in \cite{OP1} to include negative contact. 


The explicit calculations in this paper also provide a new perspective on some known results in the literature.
For example, in \cite{TY}, it was shown that for sufficiently large $r$ and $X$ a curve, stationary orbifold invariants of $X_{D,r}$ are independent of $r$. This result can be verified in the case of $\mathcal{C}_{r,s=1}$ using our explicit computation (see Remark \ref{rmk:indepofr}). In \cite{TY2}, it was shown that genus $g$ orbifold GW-invariants of $X_{D,r}$, as a polynomial of $r$ for sufficiently large $r$, has $r$-degree bounded by $\max\{0,2g-1\}$. This result can also be verified for $\mathcal{C}_{r,s=1}$ using our explicit computation (see Remark \ref{rmk:r-degbd}). Actually, in our special case, the computation in Section \ref{sec:opeqv} shows that the coefficients of such $r$-polynomials can be determined from the Hodge integrals in the moduli space of curves, which might be of independent interest.

In order to give a more detailed account of our results, we need to introduce some notations.

\subsection{Basic setup}
\subsubsection{The Chen-Ruan cohomology of $\cC_{r,s}$} First, we have an obvious $\mathbb{C}^*$-action on $\cC_{r,s}$ such that $0$ and $\infty$ are fixed points. It naturally induces a $\C^*$-action on the inertia stack $I\cC_{r,s}$ of $\cC_{r,s}$. The fixed locus of the corresponding coarse moduli $\underline{I}\cC_{r,s}$ consist of $r$ copies of $0$ and $s$ copies of $\infty$.

The $\C^*$-equivariant Chen-Ruan orbifold cohomology $H^*_{CR,\C^*}(\cC_{r,s},\C)$ plays an important role in the definition of the equivariant orbifold invariants of $\cC_{r,s}$. As a vector space, it is equal to the $\C^*$-equivariant cohomology $H^*_{\C^*}(\underline{I}\cC_{r,s},\C)$. But $H^*_{CR,\C^*}(\cC_{r,s},\C)$ has a shifted grading and deformed cup product which are not relevant in this paper. We recommend the readers to see \cite{AJR} for an introduction to Chen-Ruan cohomology.

The $\C^*$-equivariant cohomology of a point $H^*_{\C^*}(pt,\C)$ is equal to $\C[t]$ where $t$ is the first Chern class of the standard representation of $\C^*$. So $H^*_{CR,\C^*}(\cC_{r,s},\C)$ is canonically a $\C[t]$-module. The Atiyah-Bott localization formula tells us that after localizing the appropriate element of $\C[t]$, $H^*_{CR,\C^*}(\cC_{r,s})$ has a basis given by the classes $\boldsymbol{0}_{i/r}$, $\boldsymbol{\infty}_{j/s}$ ($0\leq i\leq r-1$, $0\leq j\leq s-1$) where $\boldsymbol{0}_{i/r}$ is the Poincar\'e dual of the $\mathbb{C}^*$-fixed point $0$ in the component of $\underline{I}\cC_{r,s}$ with age $\frac{i}{r}$, and  $\boldsymbol{\infty}_{j/s}$ is the Poincar\'e dual of the $\mathbb{C}^*$-fixed point $\infty$ in the component of $\underline{I}\cC_{r,s}$ with age $\frac{j}{s}$.

\subsubsection{Orbifold GW-invariants of $\cC_{r,s}$}
Let $\bM_{g,m}(\cC_{r,s},d)$ be the moduli space of stable maps to $\cC_{r,s}$ with genus $g,m$ markings and degree $d$. The natural $\C^*$-action on $\cC_{r,s}$  induces a $\C^*$-action on $\bM_{g,m}(\cC_{r,s},d)$. The $\C^*$-equivariant orbifold GW-invariant can be defined via an integration over the equivariant virtual cycle of the moduli space:
\begin{equation}\label{eqn:orbilineeq}
\left\langle\tau_{k_1}(\gamma_1)\dots\tau_{k_m}(\gamma_m)\right\rangle_{g,d}^{\cC_{r,s},\,\C^*}\coloneqq \int_{[\bM_{g,m}(\cC_{r,s},d)]^{\vir}_{\C^*}}\prod_{i=1}^m\bar{\psi}_i^{k_i}ev_i^*(\gamma_i)
\end{equation}
where $ev_i$ denotes the evaluation map of the $i$-th marking which allows us to pull back the equivariant Chen-Ruan cohomology classes $\gamma_i\in H^*_{CR,\C^*}(\cC_{r,s})$,  $\bar{\psi}_i$ are psi-classes pulled back from the corresponding psi-classes of $\bM_{g, m}(\mathbb{P}^1,d)$ via the morphism $f: \bM_{g,m} (\cC_{r,s},d)\to \bM_{g,m}(\mathbb{P}^1,d)$ which forgets the orbifold structure. We add a bullet to denote the corresponding disconnected invariant, that is, $\langle\dots\rangle^{\bullet}$.

\begin{rmk}
We notice that the above notation \eqref{eqn:orbilineeq} does not specify the age information of each marking. Such age information can be specified via the choosing of $\gamma_i$. For example, if we choose $\gamma_i$ to be $\boldsymbol{0}_{\mu_i/r}$ ($\mu_i> 0$), then we know that the $i$-th marking is mapped to $0$ with age $\frac{\mu_i}{r}$.
\end{rmk}

\subsubsection{$\tau$ function}
The $\tau$ function encodes all the disconnected GW-invariants of $\cC_{r,s}$. It is defined as 
\begin{equation}\label{eqn:potential}
\tau=\sum_{g\in \Z}\sum_{d\geq 0}u^{2g-2}q^{d}\left\langle\exp\left(\sum_{k,i} x_k(i)\tau_k(\boldsymbol{0}_{i/r})+\sum_{k,j}x_k^*(j)\tau_k(\boldsymbol{\infty}_{j/s}) \right)\right\rangle_{g,d}^{\bullet\,\cC_{r,s},\,\C^*}.
\end{equation}
Here the formal variables $\{x_{k}(i)\}$, $\{x_{k}^*(j)\}$ are introduced so as to keep track of insertions $\tau_k(\boldsymbol{0}_{i/r})$, $\tau_k(\boldsymbol{\infty}_{j/s})$ respectively.

\subsubsection{Johnson's operator formula}
Using the virtual localization and orbifold ELSV formula, Johnson expressed the $\tau$ function as a vacuum expectation on the infinite wedge space:
\begin{thm}[\cites{J1,J}]
\begin{equation}\label{eqn:orbiP1_intro}
    \tau=\left \langle e^{\sum_{k,i} x_k(i) \boldsymbol{A}_{i/r}[k]} e^{\frac{t \alpha_r}{ur}} \left(\frac{q}{t^{1/r}(-t)^{1/s}}\right)^{H} e^{\frac{-t\alpha_{-s}}{us}} e^{\sum_{k,j} x^*_k(j)\boldsymbol{A}^*_{j/s}[k]}\right\rangle
\end{equation}
An explanation of the infinite wedge space and operators appearing in the above formula will be given in Section \ref{sec:infwedg}.
\end{thm}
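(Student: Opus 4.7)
The plan is to follow the strategy indicated in the paper: combine virtual localization with respect to the $\C^*$-action on $\cC_{r,s}$ with the orbifold ELSV formula, and then reorganize the resulting graph sum into a vacuum expectation on the infinite wedge.

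First I would apply Graber--Pandharipande virtual localization to the equivariant virtual class $[\bM_{g,m}(\cC_{r,s},d)]^{\vir}_{\C^*}$. Since the $\C^*$-fixed locus of $\cC_{r,s}$ consists of the two stacky points $B\mu_r$ at $0$ and $B\mu_s$ at $\infty$, the $\C^*$-fixed loci of the moduli space are indexed by decorated bipartite graphs: vertices over $0$ carry a genus, a set of markings of prescribed age $i/r$, and ``edge half-legs'' of orbifold ramification profile; vertices over $\infty$ are decorated symmetrically with $j/s$; and edges correspond to rational orbifold covers of $\PP^1$ totally ramified over $0$ and $\infty$, with profile determined by the adjacent half-legs. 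Each connected vertex contribution over $0$ factors, after integrating out the Hodge class in the inverse Euler class of the virtual normal bundle, into an integral of $\bar\psi$ and $\lambda$ classes over $\bM_{g,n}(B\mu_r)$; this is exactly the orbifold ELSV integral, and packaging it via its Laplace transform gives, by construction, a matrix coefficient of the operator $\boldsymbol{A}_{i/r}[k]$ on the infinite wedge. Analogously vertices over $\infty$ produce $\boldsymbol{A}^*_{j/s}[k]$. Each edge over the tube $\PP^1$ contributes a factor $1/(\text{mult})$ together with the inverse Euler class factor $\frac{1}{t/r-t/s}$-type tangent-weight denominators that match the pairing on the infinite wedge between a bra indexed at $0$ and a ket indexed at $\infty$; the degree of the edge is tracked by a power of $q$.

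Next I would assemble the sum over graphs. The key observation is that summing over all (disconnected) bipartite graphs with arbitrary numbers of free half-edges on each side is, in Wick-calculus form, exactly a vacuum expectation $\langle\,\exp(\text{vertex-0 ops})\cdot(\text{propagator})\cdot\exp(\text{vertex-}\infty \text{ ops})\,\rangle$. The propagator is the operator that contracts a state at $0$ with a state at $\infty$: the edges attached only to vertices over $0$ (respectively only over $\infty$) are generated by $e^{t\alpha_r/(ur)}$ (respectively $e^{-t\alpha_{-s}/(us)}$), while the energy operator $H$ produces the $q^d$ weighting by the total number of boxes, and its conjugation by the nontrivial powers $t^{1/r}$ and $(-t)^{1/s}$ absorbs precisely the equivariant edge weights coming from the tangent spaces of $\cC_{r,s}$ at $0$ and $\infty$. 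Comparing the bosonic commutation relations $[\alpha_r,\alpha_{-s}]$-type contractions to the edge enumeration from localization yields the claimed formula \eqref{eqn:orbiP1_intro}.

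The main obstacle is the bookkeeping in the second step: one has to (i) check that the splitting of vertex and edge contributions matches precisely how the operators $\boldsymbol{A}_{i/r}[k]$ are defined (in particular that the $\bar\psi$-pullback from $\bM_{g,m}(\PP^1,d)$, as opposed to intrinsic orbifold $\psi$, is what produces the correct Laplace transform), and (ii) track the equivariant weights and combinatorial factors (orbifold automorphisms $1/r$, $1/s$, edge multiplicities, signs of $(-t)^{1/s}$) so that they reassemble into the conjugation $(\,q/(t^{1/r}(-t)^{1/s})\,)^{H}$ and the prefactors $t/(ur)$, $-t/(us)$ in the exponentials. Once this bookkeeping is done correctly, the identification with the infinite-wedge vacuum expectation is forced by the standard Wick theorem for the Heisenberg algebra generated by the $\alpha_k$'s acting on Fock space.
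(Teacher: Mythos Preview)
This theorem is not proved in the present paper; it is quoted from Johnson \cites{J1,J}. The paper only summarizes Johnson's argument in one sentence at the start of Section~\ref{sec:infwedg}: localization reduces the $\tau$-function to Hurwitz--Hodge integrals, the orbifold ELSV formula \cite{JPT} converts those integrals into double Hurwitz numbers, and the character-theoretic interpretation of double Hurwitz numbers yields the vacuum expectation on $\Lambda^{\infty/2}V$. So there is no proof here for your proposal to be compared against; your outline is being compared to that one-line summary plus whatever is in \cites{J1,J}.

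Your sketch follows the same localization-plus-ELSV route, and you correctly identify the bipartite-graph fixed loci, the role of $e^{t\alpha_r/(ur)}$ and $e^{-t\alpha_{-s}/(us)}$ as edge generators, and the energy operator $H$ as the degree counter. One substantive difference from Johnson's argument as the paper describes it: you pass directly from the vertex Hodge integrals to ``matrix coefficients of $\boldsymbol{A}_{i/r}[k]$'' via a Laplace transform, whereas Johnson inserts an intermediate step---the orbifold ELSV formula turns the Hurwitz--Hodge integrals into \emph{double Hurwitz numbers}, and it is the Burnside/character-theory expression for those Hurwitz numbers (as sums over partitions, i.e.\ over the basis $v_\lambda$) that produces the infinite-wedge operators. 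Without that step, the claim that the vertex contribution is ``by construction'' a matrix coefficient of $\boldsymbol{A}_{i/r}[k]$ is circular: the operators $\boldsymbol{A}_{i/r}$ in Definition~\ref{def:A-ops} are \emph{defined} so that this works, and the content of Johnson's theorem is precisely the verification via Hurwitz theory. Your two acknowledged bookkeeping obstacles (i) and (ii) are real and nontrivial in \cite{J1}; what you have written is a fair strategic outline but not a proof.
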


\subsubsection{GW-invariants of the cap and tube}
Let $\bM_{g,n}(\PP^1/0,d,\vec{\mu}_0)$ be the moduli space which parametrizes relative stable maps from genus $g, n$-pointed curves to $\PP^1$ of degree $d$ with monodromy at $0$ given by an ordered partition $\vec{\mu}_0=(\mu_1,\cdots,\mu_\rho)$ of $d$. The GW-invariants of the pair $(\PP^1,0)$ can be defined as integrals against the virtual cycle as before:
\[\left\langle\vec{\mu}_0\left|\prod_{i=1}^n\tau_{k_i}(\gamma_i)\right.\right\rangle\coloneqq \int_{[\bM_{g,n}(\PP^1/0,d,\vec{\mu}_0)]^{\vir}}\prod_{i=1}^n\psi_i^{k_i}ev_i^*(\gamma_i)\]
where $\gamma_i\in H^*(\PP^1)$. As before, we add a bullet or $\C^*$ to denote the corresponding disconnected or $\C^*$-equivariant invariant. Here we omit the degree $d$ and genus $g$ in the bracket notation, since $d$ can be determined from $\vec{\mu}_0$ and $g$ can be determined from the dimension constraint.

For the pair $(\PP^1,\infty)$, we write the monodromy data on the right side of the bracket:
\[\left\langle\left.\prod_{i=1}^n\tau_{k_i}(\gamma_i)\right |\vec{\mu}_{\infty}\right\rangle.\]
As for the pair $(\PP^1,0\cup\infty)$, we write the monodromy data of $0$ on the left and write that of $\infty$ on the right:
\[\left\langle\vec{\mu}_0\left|\prod_{i=1}^n\tau_{k_i}(\gamma_i)\right |\vec{\mu}_{\infty}\right\rangle.\]

According to \cites{ACW,JPPZ,JPPZ2,TY}, we can redefine GW-invariants of the cap and tube as certain limits of orbifold GW-invariants of $\mathcal{C}_{r,s}$ (see Section \ref{sec:rel/orbGW}). A little more precisely, by replacing each multiplicity $\mu_i\in\vec{\mu}_0$ (resp. $\mu'_i\in\vec{\mu}_{\infty}$) with an insertion $0_{\mu_i/r}$ (resp. $\infty_{\mu'_{i/s}}$) which stands for the identity element of the twisted sector over $0$ (resp. $\infty$) with age $\mu_i/r$ (resp. $\mu'_i/s$), we get a corresponding orbifold invariant of $\mathcal{C}_{r,s}$. For example, the GW-invariant of the tube above corresponds to an orbifold invariant
\[\left\langle\prod_{i}\tau_0(0_{\mu_i/r})\prod_{i=1}^n\tau_{k_i}(\gamma_i)\prod_i\tau_0(\infty_{\mu'_i/s})\right\rangle^{\mathcal{C}_{r,s}}_{g,d}.\]
Here $\gamma_i$ should be seen as cohomology classes in the untwisted sector. For the pair $(\PP^1,0)$ (resp. $(\PP^1,\infty)$), we get an orbifold invariant of $\mathcal{C}_{r,s=1}$ (resp. $\mathcal{C}_{r=1,s}$).

For sufficiently large $r,s$, the corresponding orbifold invariant will be a polynomial of $r,s$ and the constant term matches with the relative GW-invariant we defined above. Note that for the pair $(\PP^1,0)$ (resp. $(\PP^1,\infty)$), the polynomial will only depend on $r$ (resp. $s$). 

In \cites{FWY,FWY2}, relative GW-invariants were generalized to include negative contact using the orbifold perspective. Specializing to the previous cap or tube case, negative contact $\mu_i\in\vec{\mu}_0$ (resp. $\mu'_i\in\vec{\mu}_{\infty}$) simply corresponds to the insertion $0_{\frac{r+\mu_i}{r}}$ (resp. $\infty_{\frac{s+\mu_i'}{s}}$).


\subsection{Main results}
The orbifold perspective gives us a new approach to the operator formalism for GW-invariants of the cap and tube. For technical reasons, we only consider the following two cases. 

(I) In the first case, we consider equivariant invariants without negative contact.
We always treat the cap case carefully since the tube case follows by a similar analysis. So we want to compute the following equivariant GW-invariants of $(\PP^1,0)$:
\begin{equation*}
\left\langle\vec{\mu}\left |\prod_{i=1}^n\tau_{k_i}(\boldsymbol{0})\prod_{j=1}^m\tau_{l_j}(\boldsymbol{\infty})\right.\right\rangle^{\bullet,\mathbb{C}^*}
\end{equation*}
where $\boldsymbol{0}$, $\boldsymbol{\infty}$ are two natural equivariant classes in $H^*_{\C^*}(\PP^1)$. By varying $k_i$ and $l_j$, we get a generating function
\[G(\vec{\mu}|z_1,\cdots,z_n,w_1,\cdots,w_m).\]
The orbifold perspective leads us to consider the following orbifold invariants:
\begin{equation}\label{eqn:orbpers}
\left\langle\prod_i \tau_0(\boldsymbol{0}_{\mu_i/r})\prod_{i=1}^n\tau_{k_i}(\boldsymbol{0}_{0/r})\prod_{j=1}^m\tau_{l_j}(\boldsymbol{\infty}_{0/1})\right\rangle^{\bullet,\mathcal{C}_{r,s=1},\mathbb{C}^*}.
\end{equation}
By \cite{FWY2}, it is still true that the equivariant orbifold invariants above are polynomials of $r$ and the constant terms give the equivariant relative invariants we want. So we can use Johnson's operator formula \eqref{eqn:orbiP1_intro} to calculate 
the generating series of such orbifold invariants in the large $r$ limit. The explicit calculation will be done in Section \ref{sec:opeqv}. The generating series of \eqref{eqn:orbpers} in the large $r$ limit will be a function depends on $r$, after taking the $r^0$-coefficient we get the following operator formula:
\begin{thm}[=Theorem \ref{thm:opfm_cap0}]
\[G(\vec{\mu}|z_1,\cdots,z_n,w_1,\cdots,w_n)=\left\langle  \prod_{i}\frac{\alpha_{\mu_i}}{\mu_i}E(z_1,\cdots,z_n,t)e^{\alpha_{-1}}\prod_{j=1}^m \bA^*(w_j)\right\rangle.\]
\end{thm}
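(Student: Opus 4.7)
The plan is to derive the formula as a specialization of Johnson's operator identity \eqref{eqn:orbiP1_intro} for the root stack $\cC_{r,1}$, by extracting the coefficient of the $\tau$ function corresponding to our insertions and then keeping the $r^{0}$-term in the large $r$ expansion.

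First, I would pass from relative to orbifold invariants using the cited results of \cite{FWY2}: the equivariant generating function $G(\vec{\mu}\,|\,z_{1},\ldots,z_{n},w_{1},\ldots,w_{m})$ is the $r^{0}$-coefficient of the generating function of the orbifold invariants \eqref{eqn:orbpers} on $\cC_{r,1}$. In the $\tau$ function \eqref{eqn:potential}, the insertions $\tau_{0}(\boldsymbol{0}_{\mu_{i}/r})$, $\tau_{k_{i}}(\boldsymbol{0}_{0/r})$, and $\tau_{l_{j}}(\boldsymbol{\infty}_{0/1})$ are tracked by $x_{0}(\mu_{i})$, $x_{k_{i}}(0)$, and $x^{*}_{l_{j}}(0)$, so the relevant orbifold generating function is obtained from $\tau$ by differentiating in these formal variables (setting the remaining $x$'s to zero) and then summing over $k_{i}$ and $l_{j}$ against $z_{i}^{k_{i}}/k_{i}!$ and $w_{j}^{l_{j}}/l_{j}!$.

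Second, I would substitute Johnson's formula \eqref{eqn:orbiP1_intro} with $s=1$ and perform the differentiation inside the vacuum expectation. The left exponential $e^{\sum_{k,i} x_{k}(i)\bA_{i/r}[k]}$ contributes the operator product
\[
\prod_{i}\bA_{\mu_{i}/r}[0]\cdot\prod_{i=1}^{n}\sum_{k_{i}\geq 0}\frac{z_{i}^{k_{i}}}{k_{i}!}\bA_{0/r}[k_{i}],
\]
and analogously the right exponential contributes $\prod_{j=1}^{m}\sum_{l_{j}\geq 0}\frac{w_{j}^{l_{j}}}{l_{j}!}\bA^{*}_{0/1}[l_{j}]$. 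Between these two blocks sits the central factor $e^{t\alpha_{r}/(ur)}\bigl(q/(t^{1/r}(-t))\bigr)^{H}e^{-t\alpha_{-1}/u}$.

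Third, I would analyse each piece in the large $r$ limit and extract the $r^{0}$ coefficient. For the relative markings, the asymptotics of $\bA_{\mu/r}[0]$ as $r\to\infty$ produce the bosonic oscillator $\alpha_{\mu}/\mu$ (this reflects the universal behavior of relative invariants as limits of orbifold ones), giving $\prod_{i}\alpha_{\mu_{i}}/\mu_{i}$. The $z$-weighted sums of $\bA_{0/r}[k]$ limit to $E(z,t)$ by its definition in Section \ref{sec:infwedg}, and the $w$-weighted sums of $\bA^{*}_{0/1}[l]$ assemble into $\bA^{*}(w_{j})$ by definition. For the central factor, once we pick out $r^{0}$, the $t^{1/r}$ factor becomes trivial, so it remains to commute $\alpha_{r}$ past $(q/(-t))^{H}$, use that $\alpha_{r}/r$ kills the relevant matrix elements in the $r\to\infty$ regime, and verify that the surviving $q, t, u$ prefactors can be reabsorbed into the definitions of $E$ and $\bA^{*}$, leaving exactly $e^{\alpha_{-1}}$.

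The hard part will be the third step, specifically controlling the large $r$ behaviour of the central operator $e^{t\alpha_{r}/(ur)}\bigl(q/(t^{1/r}(-t))\bigr)^{H}e^{-t\alpha_{-1}/u}$ and tracking precisely how $q, t, u$ redistribute into the other operators upon taking the $r^{0}$-coefficient. Technically this amounts to commuting $\alpha_{r}$ past the translation operator $q^{H}$, analysing the polynomial-in-$r$ corrections coming from $(t^{1/r})^{-H}$ acting on the left insertions, and checking that no residual $r$-dependent term survives. Once this bookkeeping is complete, the remaining matches with $E(z_{i},t)$ and $\bA^{*}(w_{j})$ are immediate from their definitions in the infinite-wedge formalism, and reassembling the pieces in the vacuum expectation yields the desired formula.
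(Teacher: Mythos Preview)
Your outline has the right overall architecture (pass to $\cC_{r,1}$, plug into Johnson's formula, take the $r^{0}$-coefficient), but it contains a genuine gap in the third step and misidentifies where the real work lies.

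You claim that ``the $z$-weighted sums of $\bA_{0/r}[k]$ limit to $E(z,t)$ by its definition in Section~\ref{sec:infwedg}''. This is not a definition; $E(z_{1},\dots,z_{n},t)$ is introduced only in Section~\ref{sec:opeqv} as a specific sum over set partitions of products of $\mathcal{E}_{0}$ operators, and proving that the $r^{0}$-part of $\prod_{i}\bA_{0/r}(z_{i})$ inside the vacuum expectation equals $E(z_{1},\dots,z_{n},t)$ is the entire content of the theorem. The point you are missing is that each $\bA_{0/r}(z_{i})$ contains summands $\mathcal{E}_{ir}(uz_{i})$ of nonzero $r$-energy; although any single such summand annihilates the vacuum for large $r$, products of them with cancelling $r$-energies survive and contribute through iterated commutators. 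The paper handles this with the structural Lemma~\ref{lem:basic} (proved in Appendix~\ref{sec:interaction} via interaction diagrams), which rewrites $\prod_{i}\bA_{0/r}(z_{i})$ as $\sum_{P}\prod_{i}f_{\vec{a}_{P_{i}}}\mathcal{E}_{0}(u|z_{P_{i}}|)$, and then with Lemma~\ref{lem:f0-hodge}, which identifies $f_{(0,\dots,0)}$ with a generating series of Hodge integrals. Only after a genus-by-genus $r$-power count does one see that the $r^{0}$-coefficient comes solely from genus zero and equals $t^{l_{i}-1}(\prod z_{p_{j}})|z_{P_{i}}|^{l_{i}-2}$, which is exactly the combinatorial weight in $E$. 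None of this is visible in your proposal.

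Conversely, the piece you flag as ``the hard part'' --- the central factor $e^{t\alpha_{r}/(ur)}(\,\cdots)^{H}e^{-t\alpha_{-1}/u}$ --- is actually the easy part. A straightforward energy argument (the dual of any operator in $e^{t\alpha_{r}/(ur)}$ beyond the identity has negative energy for $r>d$ and kills $v_{\emptyset}$) lets you drop $e^{t\alpha_{r}/(ur)}$ outright, and the $t^{d/r}$ and $(-t)^{d}$ factors cancel trivially against the prefactors $t^{\mu_{i}/r}$ in $L_{1}$ and the $(-t)$'s in $L_{2}$. There is no commuting of $\alpha_{r}$ past $q^{H}$ to be done. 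So your effort is pointed at the wrong place: the substantive argument is the large-$r$ analysis of the interacting $\bA_{0/r}$ block, not the central factor.
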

The explicit forms of the above operators can be found in Sections \ref{sec:infwedg} and \ref{sec:opeqv}. 

\begin{rmk}
The above equality holds only when we add the unstable contributions to the generating function $G(\vec{\mu}|z_1,\cdots,z_n,w_1,\cdots,w_n)$ similar as in \cite[Section 2.3.4]{OP3}.
\end{rmk}

\begin{rmk}
We can similarly calculate the generating function of the pair $(\PP^1,\infty)$. The resulting operator formula will recover the operator formula in \cite[Proposition 3.2]{OP3} (see also Remark \ref{rmk:OPfm}).
\end{rmk}


Similarly, by analysing Johnson's operator formula in the large $r,s$ limit, we have the following operator formula for the equivariant GW-invariants of the tube:
\begin{thm}[=Theorem \ref{thm:eqtubefm}]
\[G(\vec{\mu}|z_1,\cdots,z_n,w_1,\cdots,w_n|\vec{\nu})=\left\langle  \prod_{i}\frac{\alpha_{\mu_i}}{\mu_i}E(z_1,\cdots,z_n,t)E(w_1,\cdots,w_m,-t)\prod_{j}\frac{\alpha_{v_j}}{v_j}  \right\rangle.\]
\end{thm}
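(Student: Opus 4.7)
My plan is to mirror the cap proof (Theorem \ref{thm:opfm_cap0}) by specializing Johnson's formula \eqref{eqn:orbiP1_intro} and extracting the $r^0 s^0$-coefficient in the large $r,s$ limit. By the orbifold--relative correspondence recalled in Section \ref{sec:rel/orbGW}, the equivariant tube invariants are the constant term in $r$ and $s$ of the polynomial orbifold invariants
\[\left\langle\prod_i\tau_0(\boldsymbol{0}_{\mu_i/r})\prod_{i=1}^n\tau_{k_i}(\boldsymbol{0}_{0/r})\prod_{j=1}^m\tau_{l_j}(\boldsymbol{\infty}_{0/s})\prod_j\tau_0(\boldsymbol{\infty}_{\nu_j/s})\right\rangle^{\bullet,\mathcal{C}_{r,s},\C^*}.\]
Assembling these invariants over $k_i,l_j$ weighted by $z_i,w_j$ produces, by \eqref{eqn:potential}, a partial derivative of Johnson's $\tau$: differentiate once in each $x_0(\mu_i)$ and each $x_0^*(\nu_j)$, then set those variables to zero while keeping the $x_k(0),x_k^*(0)$ live as duals to $z_i,w_j$.

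Substituting \eqref{eqn:orbiP1_intro} pulls out an ordered product of boundary operators $\boldsymbol{A}_{\mu_i/r}[0]$ on the left and $\boldsymbol{A}^*_{\nu_j/s}[0]$ on the right, with the interior exponentials $\exp(\sum_k x_k(0)\boldsymbol{A}_{0/r}[k])$ and $\exp(\sum_k x_k^*(0)\boldsymbol{A}^*_{0/s}[k])$ still to be processed. Repackaging these in the variables $z_i,w_j$ yields the same kind of generating series that, in the proof of Theorem \ref{thm:opfm_cap0}, converges in the large $r$ (respectively large $s$) limit to $E(z_1,\ldots,z_n,t)$ and to its mirror $E(w_1,\ldots,w_m,-t)$; the sign flip in the second $E$ reflects that the equivariant parameter at $\infty$ is $-t$. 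Since the $\alpha_k$ with positive (respectively negative) indices commute among themselves, the unordered products $\prod_i\alpha_{\mu_i}/\mu_i$ and $\prod_j\alpha_{\nu_j}/\nu_j$ appearing on the right hand side of the theorem are unambiguous.

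The asymptotic analysis then reduces to three ingredients: (i) $\boldsymbol{A}_{\mu_i/r}[0]=\alpha_{\mu_i}/\mu_i+O(1/r)$, and symmetrically for $\boldsymbol{A}^*_{\nu_j/s}[0]$ in $1/s$, which follows from Johnson's definition evaluated on genus-zero Hodge data; (ii) the interior exponentials convert to $E(z_1,\ldots,z_n,t)$ and $E(w_1,\ldots,w_m,-t)$ exactly as in the proof of Theorem \ref{thm:opfm_cap0}; and (iii) the middle block $e^{t\alpha_r/(ur)}\bigl(q/(t^{1/r}(-t)^{1/s})\bigr)^{H}e^{-t\alpha_{-s}/(us)}$ collapses to $q^H$ at the level of the $r^0 s^0$-coefficient, because $\alpha_r$ and $\alpha_{-s}$ shift energy by $\pm r$ and $\mp s$, so once $r,s$ exceed the bounded energies carried by the surrounding operators these exponentials act trivially on the relevant finite-energy subspace, while $t^{1/r},(-t)^{1/s}\to 1$. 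Combining (i)--(iii) reproduces the claimed vacuum expectation, with the $q^H$ absorbed into the degree weighting.

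The main obstacle is the bookkeeping in (iii): individual factors in Johnson's formula carry negative powers of $r,s$ together with fractional powers of $t$, and only after combining them and restricting to the $r^0 s^0$-coefficient do the apparent singularities cancel. This is a two-sided version of the technical heart of the cap argument in Section \ref{sec:opeqv}; the factor $e^{\alpha_{-1}}$ that appeared there has no analogue here, because the cap's unramified end has been replaced by genuine ramification data $\vec{\nu}$, so no additional gluing oscillator is needed.
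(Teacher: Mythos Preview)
Your approach is the paper's: it says only that ``the analysis is parallel to the cap case'' and omits the details, and your plan to run the large-$r$ argument of Section~\ref{sec:opeqv} symmetrically on the $s$ side is exactly right, including the observation that the oscillator $e^{\alpha_{-1}}$ from the cap disappears once $s$ is also taken large.

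Two of your stated mechanisms are off, though the conclusions are correct. In (i), the extra summands of $\boldsymbol{A}_{\mu_i/r}[0]$ are not $O(1/r)$; they vanish in the vacuum expectation because the operators $\mathcal{E}_{ir+\mu_i}$ with $i<0$ have duals of negative energy once $r>\mu_i$ and hence annihilate $v_\emptyset$ --- this is the exact argument in Section~\ref{section:equicap}, not an asymptotic one, and it has nothing to do with Hodge data. In (iii), $t^{1/r}$ and $(-t)^{1/s}$ are formal expressions in the equivariant parameter and do not ``tend to $1$''; rather, the factor $t^{-d/r}(-t)^{-d/s}$ coming from $(q/t^{1/r}(-t)^{1/s})^H$ at degree $d$ cancels \emph{exactly} against the prefactors $t^{\mu_i/r}$ in $\boldsymbol{A}_{\mu_i/r}[0]$ and $(-t)^{\nu_j/s}$ in $\boldsymbol{A}^*_{\nu_j/s}[0]$, just as in the proof of Theorem~\ref{thm:opfm_cap0}. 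With these corrections your outline is sound and coincides with what the paper intends.
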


(II)  In the second case, we consider non-equivariant invariants with negative contact. As before, we start from the cap case. So we want to compute the following non-equivariant GW-invariant of $(\PP^1,0)$:
\begin{equation*}
\left\langle \vec{\mu}_0\left|\prod_{i=1}^n\tau_{k_i}(\omega)\right.\right\rangle^{\bullet}
\end{equation*}
where $\vec{\mu}_0=(a_1, \dots, a_{l_0}, b_1, \dots, b_{m_0})$ is an ordered partition of degree $d$ such that $a_i>0$, $b_j<0$, and $\omega$ is the point class of $\PP^1$. By definition of relative invariants with negative contact in Section \ref{sec:rel/orbGW}, we need to determine the large $r$ limit of the following orbifold invariant:
\begin{equation}\label{eqn:noneqnorb_intro}
r^{m_0}\left\langle \prod_{i=1}^{l_0}\tau_0(0_{a_i/r})\prod_{i=1}^{m_0}\tau_0(0_{(r+b_i)/r})\prod_{i=1}^n\tau_{k_i}(\omega)\right\rangle^{\bullet,\mathcal{C}_{r,s=1}}
\end{equation}
and take the $r^0$-coefficient. It is equivalent to consider the following equivariant orbifold invariant:
\begin{equation*}
r^{m_0}\left\langle \prod_{i=1}^{l_0}\tau_0(\boldsymbol{0}_{a_i/r})\prod_{i=1}^{m_0}\tau_0(\boldsymbol{0}_{(r+b_i)/r})\prod_{i=1}^n\tau_{k_i}(\boldsymbol{0}_{0/r})\right\rangle^{\bullet,\mathcal{C}_{r,s=1},\C^*}.
\end{equation*}
Again we apply Johnson's operator formula \eqref{eqn:orbiP1_intro} and analyse its large $r$ behavior. The explicit calculation will be done in Section \ref{sec:noneqopfm}. The final result is that
\begin{thm}[=Theorem \ref{thm:noneqcap}]
\begin{equation*}
   \left\langle\vec{\mu}_0\left|\prod_{i=1}^n \tau_{k_i}(\omega)\right.\right\rangle^{\bullet}= \frac{1}{(\prod_{i=1}^{l_0} a_i) d!} \left\langle \prod_{i=1}^{l_0} \alpha_{a_i} (\sum_{P} N_{\vec{b}_p} \mathcal{E}_{\vec{b}_P}[0] )\prod_{i=1}^n \mathcal{E}_0[k_i] (\alpha_{-1})^d\right\rangle
\end{equation*}
where we sum over all partitions $P$ of $\{1, ..., m_0\}$. 
\end{thm}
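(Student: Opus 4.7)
The plan is to start from the orbifold invariant \eqref{eqn:noneqnorb_intro} with $s=1$, apply Johnson's formula \eqref{eqn:orbiP1_intro}, and then extract the $r^0$-coefficient of the resulting vacuum expectation in the large-$r$ expansion after the non-equivariant specialization. Reading the insertions off the exponential $e^{\sum x_k(i)\bA_{i/r}[k]}$, each positive contact $a_i$ pulls out a factor $\bA_{a_i/r}[0]$, each negative contact $b_j$ pulls out $\bA_{(r+b_j)/r}[0]$, and each descendant $\tau_{k_i}(\omega)$ pulls out $\bA_{0/r}[k_i]$. The right-hand exponential is empty because the cap has no marking at $\infty$, and the middle piece of Johnson's formula collapses on the cap to $(q/t^{1/r})^H e^{-t\alpha_{-1}/u}$. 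The existence of a polynomial expansion in $r$ whose constant term computes the relative invariant with negative contact is guaranteed by \cite{FWY2}, so the problem is genuinely reduced to a large-$r$ analysis of a single vacuum expectation.

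The easy factors go as follows. The positive-contact operators admit a large-$r$ expansion whose leading piece is $\alpha_{a_i}/a_i$, producing the overall prefactor $1/\prod_i a_i$ and the bosonic operators in the statement. The descendant operators $\bA_{0/r}[k_i]$ degenerate in the non-equivariant limit into the operators $\mathcal{E}_0[k_i]$ of \cite{OP1}. Expanding the middle factor in $q$ and passing to the non-equivariant specialization extracts $(\alpha_{-1})^d/d!$, which accounts for the remaining $1/d!$ and the $(\alpha_{-1})^d$ on the right-hand side.

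The main obstacle is the analysis of the negative-contact operators $\bA_{(r+b)/r}[0]$ in the large-$r$ limit. Their age approaches $1$ rather than $0$, so the leading behaviour of each is $r^{-1}$ times an operator $\mathcal{E}_b[0]$ (the ``singleton'' operator in the partition sum). The prefactor $r^{m_0}$ in \eqref{eqn:noneqnorb_intro} cancels $m_0$ such $1/r$'s, but the $r^0$-coefficient also receives contributions from \emph{collisions} in which several of these operators merge into a multi-index operator $\mathcal{E}_{\vec b_P}[0]$ indexed by a block of a partition $P$ of $\{1,\dots,m_0\}$. The coefficients $N_{\vec b_P}$ should emerge as the universal combinatorial constants governing these mergings, most naturally obtained by writing $\bA_{(r+b)/r}[0]$ through the contour/integral formula that underlies Johnson's derivation and collecting the residues as several $b_j$'s coalesce. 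Verifying that after multiplication by $r^{m_0}$ the $r^0$-coefficient receives contributions \emph{only} from this partition sum, and that higher-order terms either cancel among themselves or vanish in the non-equivariant limit, is where the bulk of the work sits.

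Assembling the four ingredients — $\alpha_{a_i}$ from positive contact, $\mathcal{E}_0[k_i]$ from descendants, $\sum_P N_{\vec b_P}\,\mathcal{E}_{\vec b_P}[0]$ from negative contact, and $(\alpha_{-1})^d/d!$ from the middle factor — and placing them into a single vacuum expectation in the order dictated by Johnson's formula yields exactly the right-hand side of the theorem.
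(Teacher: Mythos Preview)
Your outline matches the paper's strategy up to the last paragraph: Johnson's formula with $s=1$, the $\alpha_{a_i}/a_i$ from positive contact, the $\mathcal{E}_0[k_i]$ from descendants after the non-equivariant limit, and the $(\alpha_{-1})^d/d!$ from the middle factor are exactly the easy steps the paper carries out in Section~\ref{sec:noneqopfm}. The gap is in your treatment of the negative-contact operators.

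You propose to read off the coefficients $N_{\vec b_P}$ by ``writing $\bA_{(r+b)/r}[0]$ through the contour/integral formula that underlies Johnson's derivation and collecting the residues as several $b_j$'s coalesce.'' This is precisely the direct route the paper abandons: the combinatorial expression for the rational function $C_{P_j}(r)$ that arises from the interaction-diagram analysis (equation \eqref{eqn:comfm}) is declared ``too complicated for a real computation,'' and no contour formulation is available to shortcut it. The paper instead proceeds in two steps that your proposal does not anticipate. First, Lemma~\ref{lem:formalneg} uses the interaction-diagram machinery of Appendix~\ref{sec:interaction} to prove a \emph{structural} statement: the $(y_{P_j})!$-coefficient of $f_{\vec b_{P_j}}\mathcal{E}_{|\vec b_{P_j}|}(u|y_{P_j}|)$ has the form $C_{P_j}(r)\,u^{l(P_j)}\mathcal{E}_{|\vec b_{P_j}|}[0]$ for some rational function $C_{P_j}(r)$, with no higher $\mathcal{E}$-coefficients appearing. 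Second, Lemma~\ref{lem:detcoeff} pins down $C_{P_j}(r)=(l(P_j)-1)!\prod_{i\ge 2}(\vec b_{P_j})_i\,/\,r^{l(P_j)}$ by an \emph{indirect} induction: one evaluates the genus-zero invariant $\langle (a,b_1,\dots,b_m)\,|\,\emptyset\,|\,(d)\rangle$ both via the nascent operator formula and, independently, via the WDVV equation (Appendix~\ref{sec:wdvvcal}), and the comparison forces the value of $C_{P_j}(r)$ via the combinatorial identity of Lemma~\ref{lem:comb}. Without this WDVV bootstrap you have no mechanism to produce the explicit constants $N_{\vec b_P}$, and your sketch gives no alternative.
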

A more detailed explanation of the operators in the above formula can be found in Section \ref{sec:noneqopfm}.

Similarly, for the tube case, we have
\begin{thm}[=Theorem \ref{thm:opform}]
Let 
\[\vec{\mu}_0=(a_1, \cdots, a_{l_0}, b_1, \cdots, b_{m_0}),\quad \vec{\mu}_{\infty}=(a_1', \cdots, a'_{l_{\infty}}, b_1', \cdots, b'_{m_{\infty}})\]
such that $a_i,a'_i>0$, $b_j, b'_j<0$, $\forall i,j$. We have
\begin{gather*}
   \left\langle\vec{\mu}_0\left|\prod_{i=1}^n \tau_{k_i}(\omega)\right|\vec{\mu}_{\infty}\right\rangle^{\bullet}=\\ \frac{1}{\prod_{i=1}^{l_0} a_i \prod_{i=1}^{l_{\infty}} a_i'} \left\langle \prod_{i=1}^{l_0} \alpha_{a_i} (\sum_{P} N_{\vec{b}_p} \mathcal{E}_{\vec{b}_P}[0] ) \prod_{i=1}^n \mathcal{E}_0[k_i] (\sum_{P'} N_{\vec{b}'_{P'}} \mathcal{E}_{\vec{b}'_{P'}}^*[0]) \prod_{i=1}^{l_{\infty}} \alpha_{-a'_i}\right\rangle
\end{gather*}
where we sum over all partitions $P$ of $\{1, ..., m_0\}$ and all partitions $P'$ of $\{1, ..., m_{\infty}\}$.
\end{thm}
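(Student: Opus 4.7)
The plan is to mirror the argument already used in the cap case (Theorem \ref{thm:noneqcap}) but to apply Johnson's formula in the joint large $r,s$ limit, tracking negative contact conditions simultaneously at $0$ and at $\infty$.

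First, using the recipe recalled at the end of Section 1.2 (and detailed in Section \ref{sec:rel/orbGW}), I would rewrite the desired relative invariant as the $r^0 s^0$-coefficient of the orbifold invariant
\[
r^{m_0} s^{m_{\infty}}\!\left\langle \prod_{i=1}^{l_0}\tau_0(0_{a_i/r})\prod_{i=1}^{m_0}\tau_0(0_{(r+b_i)/r})\prod_{i=1}^n \tau_{k_i}(\omega)\prod_{i=1}^{l_{\infty}}\tau_0(\infty_{a'_i/s})\prod_{i=1}^{m_{\infty}}\tau_0(\infty_{(s+b'_i)/s})\right\rangle^{\bullet,\mathcal{C}_{r,s}}
\]
on the non-equivariant specialization. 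Then I would plug this into Johnson's formula \eqref{eqn:orbiP1_intro} by reading off the appropriate coefficients of the formal variables $x_k(i)$ and $x^*_k(j)$, producing a vacuum expectation that must be expanded in powers of $1/r$ and $1/s$.

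The key asymptotic inputs have already been isolated in Section \ref{sec:noneqopfm} for the cap: in the non-equivariant limit $t\to 0$, the operator $\boldsymbol{A}_{a/r}[0]$ with $a>0$ fixed tends to $\alpha_a/a$, while $r\cdot\boldsymbol{A}_{(r+b)/r}[0]$ tends, after removing the $H$-weight shift from $q^H$, to an $\mathcal{E}$-type operator $\mathcal{E}_b[0]$; multiple negative-contact insertions are not independent and their product expands into a sum over set partitions $P$ with coefficient $N_{\vec{b}_P}$ acting as $\mathcal{E}_{\vec{b}_P}[0]$, exactly because the $r^{m_0}$ prefactor is calibrated to make this finite sum the surviving $r^0$-coefficient. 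Symmetric statements hold for $\boldsymbol{A}^*_{a'/s}[0]$ and $\boldsymbol{A}^*_{(s+b')/s}[0]$ with $s$ in place of $r$. The stationary descendant insertions $\tau_{k_i}(\omega)$ contribute factors $\mathcal{E}_0[k_i]$ in the middle, as in the cap case. In the non-equivariant limit, the central Hurwitz exponentials $e^{t\alpha_r/(ur)}$ and $e^{-t\alpha_{-s}/(us)}$ drop out, and $q^H$ is absorbed by degree balance.

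The point of the argument is thus largely reductive: each half of the operator formula (the creation side at $0$ and the annihilation side at $\infty$) is governed by the same combinatorial lemma that handled the cap, and since these operators sit on opposite sides of the insertions $\mathcal{E}_0[k_i]$, their expansions do not interact. The main obstacle is verifying this non-interaction carefully: one must confirm that taking the $r^0$-coefficient and the $s^0$-coefficient can be done independently (so that no mixed $r^{-a} s^{-b}$ corrections contribute when $a+b=0$ with $a,b>0$), and that the non-equivariant specialization really does eliminate the central exponentials before the asymptotic expansion is performed. Once this is justified, the two partition sums $\sum_P$ and $\sum_{P'}$ appear from the two ends and simply multiply, yielding the stated formula with the normalization $1/(\prod a_i \prod a'_i)$ coming from the $1/a_i$ factors in the positive-contact operators.
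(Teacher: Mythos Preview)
Your proposal is correct and follows essentially the same route as the paper, which states only that the tube formula is obtained ``similarly'' by analysing the large $r$ and large $s$ behaviour of Johnson's formula in the non-equivariant limit; you have spelled out what that entails. Two small corrections: the central exponentials $e^{t\alpha_r/(ur)}$ and $e^{-t\alpha_{-s}/(us)}$ are removed in the paper by the energy argument (for large $r$ and large $s$ respectively, exactly as at the start of Section~\ref{sec:opfm}) rather than by setting $t=0$; and your worry about ``mixed $r^{-a}s^{-b}$ corrections with $a+b=0$, $a,b>0$'' is vacuous as written, but the underlying concern is resolved by Remark~\ref{rmk:indepofr}, since in the stationary non-equivariant case the $r^{m_0}$ and $s^{m_\infty}$ prefactors cancel the $r$- and $s$-dependence exactly, so no extraction of constant terms is needed and the two sides genuinely decouple.
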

Using the above operator formula for the tube, we deduce an explicit formula for the one-point function of the tube with negative contact in Section \ref{sec:Ex}.

\begin{rmk}
The operator formula of the tube will specialize to that of cap (up to the factor $d!$) once we set $\vec{\mu}_{\infty}=(\underbrace{1,\cdots,1}_d)$. If there are no negative contact orders, then it will  specialize to the operator formula of the tube in \cite{OP1}. 
The new thing here is the operators of the form
\[\sum_{P} N_{\vec{b}_p} \mathcal{E}_{\vec{b}_P}[0]\]
which account for negative contact (see Definition \ref{def:extranota2} for more details on notations).
\end{rmk}

\subsection{Organization of the paper}
In Section \ref{sec:rel/orbGW}, we give a brief review of relative and orbifold GW-theories. In Section \ref{sec:infwedg}, we give an introduction to the infinite wedge formalism. In Section \ref{sec:opfm}, we give an analysis of large $r,s$ behavior of Johnson's operator formula and prove our main results. In Section \ref{sec:Ex}, we give an explicit formula for the one-point function of the tube with negative contact. The Appendices \ref{sec:interaction} and \ref{sec:wdvvcal} give some technical results needed in Section \ref{sec:opfm}.

\subsection{Acknowledgements}
Part of the results of this paper grew out of work done in the first author's Master Thesis which was supervised by Honglu Fan, the second author and Rahul Pandharipande. The first author is deeply indebted to Honglu Fan and Rahul Pandharipande for their guidance. We both thank Honglu Fan, Rahul Pandharipande, Georg Oberdieck and Dhruv Ranganathan for their helpful conversations and comments. The second author was supported by grant ERC-2017-AdG-786580-MACI. The second author was also grateful to Max Planck Institute for Mathematics in Bonn for its hospitality and financial support.

This project has received funding from the European Research Council (ERC) under the European Union’s
Horizon 2020 research and innovation program (grant agreement No. 786580).

\section{Relative GW-Theory vs. Orbifold GW-theory}\label{sec:rel/orbGW}
\subsection{General Theory}
In this section we recall the basic notions of relative GW-theory with possibly negative contact orders. The content and notation in this section is largely based on \cites{FWY,FWY2}. 
\begin{defn}A topological type $\Gamma$ is a tuple $(g,n,\beta, \rho, \vec{\mu})$ where $g,n$ are non-negative integers, $\beta\in H_2(X, \mathbb{Z})$ is a curve class and $\vec{\mu}\in (\mathbb{Z}^*)^{\rho}$ is a partition (we allow negative integers) of the intersection number $\int_{\beta} D$. 
\end{defn}
Let $X_{D,r}$ be the $r$-th root stack along the divisor $D$. We use $\underline{I}(X_{D,r})$ to denote the coarse moduli space of the inertia stack of $X_{D,r}$. It has the following form:
\begin{equation}
    \underline{I}(X_{D,r})\cong X \sqcup \bigsqcup_{i=1}^{r-1} D.
\end{equation}
We label the components of this inertia stack by $\frac{i}{r}$ for $i=0,...,r-1$. These fractional integers are called ages. The age $\frac{0}{r}$ is reserved for the component isomorphic to $X$. The other components are called twisted sectors. A topological type $\Gamma=(g, n, \beta, \rho, \vec{\mu})$ determines a moduli space of stable maps into $X_{D,r}$ in the following way:
\begin{itemize}
    \item $g, \beta$ correspond to genus and curve class.
    \item $n$ is the number of markings without orbifold structure.
    \item $\rho$ is the number of markings with orbifold structure.
    \item When $\mu_i > 0$, the evaluation map of the corresponding marking lands on the twisted sector with age $\frac{\mu_i}{r}$.
    \item When $\mu_i < 0$, the evaluation map of the corresponding marking lands on the twisted sector with age $\frac{r + \mu_i}{r}$.
\end{itemize}
Note that for this to make sense we need $r> \max_{1\leq i \leq \rho} \lvert \mu_i \rvert$. We denote this moduli space by $\bM_{\Gamma}(X_{D,r})$. The domain curves of these stable maps are connected. If we also allow disconnected domain curves we will write $\bM^{\bullet}_{\Gamma}(X_{D,r})$. In this case $g$ in $\Gamma$ can be negative.  Let $ev_{X,i}:\bM_{\Gamma}(X_{D,r})\to X$ denote the evaluation map at the $i$-th marked point without orbifold structure for $i=1, ..., n$. Similarly let $ev_{D,j}:\bM_{\Gamma}(X_{D,r})\to D$ denote the evaluation map at the $j$-th marked point with orbifold structure for $j=1, .., \rho$. We consider the forgetful map
\begin{equation}
    \tau:\bM_{\Gamma } (X_{D,r}) \to \bM_{g, n+ \rho}(X, \beta) \times_{X^\rho} D^\rho.
\end{equation}
And we define the psi-class $\bar{\psi}_i:=\tau^* \psi_i$ for $i=1,..., n+\rho$.
\begin{defn}
\label{def:orbifoldinv}
Let $\Gamma=(g, n, \beta, \rho, \vec{\mu})$ be a topological type. Let $\underline{\alpha}=(\bar{\psi}^{a_1}\alpha_1, ..., \bar{\psi}^{a_n} \alpha_n)\in (\mathbb{C}[\bar{\psi}]\otimes H^*(X))^n$. Let $\underline{\epsilon}=(\bar{\psi}^{b_1}\epsilon_1, ..., \bar{\psi}^{b_\rho}\epsilon_{\rho})\in (\mathbb{C}[\bar{\psi}]\otimes H^*(D))^{\rho}$. We define
\begin{equation}
    \langle \underline{\epsilon},  \underline{\alpha} \rangle_{\Gamma}^{X_{D,r}} \coloneqq \int_{[\bM_{\Gamma}(X_{D,r})]^{\vir}} \prod_{j=1}^{\rho} \bar{\psi}_{D,j}^{b_j} ev_{D,j}^*(\epsilon_j) \prod_{i=1}^{n} \bar{\psi}_{X,i}^{a_i} ev_{X,i}^*(\alpha_i),
\end{equation}
where $\bar{\psi}_{D,j}, \bar{\psi}_{X,i}$ are psi-classes of the corresponding markings.
If we replace $\bM_{\Gamma}(X_{D,r})$ with $\bM^{\bullet}_{\Gamma}(X_{D,r})$ we get the disconnected invariant which we denote by $\langle \underline{\epsilon},  \underline{\alpha} \rangle_{\Gamma}^{\bullet X_{D,r}}$.
\end{defn}
Following \cites{FWY2}, we can now define relative invariants with possibly negative contact orders as a limit of orbifold invariants.
\begin{defn}
\label{def:relinvariant}
We define
\begin{equation}
    \langle \underline{\epsilon},  \underline{\alpha} \rangle_{\Gamma}^{(X,D)}\coloneqq \left [\lim_{r\to \infty} r^{\rho_{-}} \langle \underline{\epsilon},  \underline{\alpha} \rangle_{\Gamma}^{X_{D,r}}\right]_{r^0},
\end{equation}
where $[\cdot]_{r^0}$ extracts the constant coefficient of a polynomial in $r$ and $\rho_-$ is the number of negative integers in $\vec{\mu}$. The disconnected relative invariant $\langle \underline{\epsilon},  \underline{\alpha} \rangle_{\Gamma}^{\bullet (X,D)}$ can be defined as the limit of the corresponding disconnected orbifold invariants.
\end{defn}
\begin{rmk}
Definition~\ref{def:relinvariant} makes sense since in \cite{FWY2}, it is shown that for large enough $r$ the expression inside the limit becomes a polynomial in $r$. Moreover it is shown in~\cite{TY} that if there are no negative contact orders, then this definition agrees with the definition of relative invariants in the traditional sense \cites{LR, IP2, EGH, Jun1, Jun2}. In \cite{FWY2}, relative invariants with negative contact are also expressed using relative invariants in the traditional sense. But we will not use the second definition in this paper.
\end{rmk}
\begin{rmk}
\label{rem:genus0rindep}
If the genus $g$ equals to $0$ in Definition~\ref{def:relinvariant} the expression inside the limit becomes independent of $r$ for $r$ sufficiently large. This is Theorem 3.2 in \cite{FWY}.
\end{rmk}
We note that the disconnected relative invariant can be expressed as a sum of products of connected relative invariants. This just follows from the fact that the same holds for orbifold invariants. More precisely, we have

\begin{equation}\label{eqn:prodform}
    \langle \underline{\epsilon}, \underline{\alpha} \rangle_{\Gamma}^{\bullet (X,D)}=\sum_{\Gamma'=\{\Gamma_\pi\}} \prod_{\pi} \langle \underline{\epsilon}^\pi, \underline{\alpha}^\pi \rangle_{\Gamma_\pi}^{(X,D)}
\end{equation}
where $\Gamma'$ are finite collections of topological types $\Gamma_\pi$ where the total genus is $g$ and total degree is $\beta$ and all the interior markings and relative markings of $\Gamma$ are distributed over these $\Gamma_\pi$. The notation $\underline{\epsilon}^\pi$ and $\underline{\alpha}^{\pi}$ just mean that we extract a subvector of $\underline{\epsilon}$ resp. $\underline{\alpha}$ according to $\Gamma_\pi$.

\subsection{Specializing to orbifold lines}
In this paper, we focus on the case that $X$ is $\PP^1$ and $D$ is the union of two points $0$, $\infty$ in $\PP^1$. In this case, the curve class $\beta$ can be replaced by a non-negative integer $d$. We also need to specify the orbifold markings over $0$ or $\infty$. So the topological type $\Gamma$ is changed to be 
\[(g,n,d,\rho^0, \rho^{\infty},\vec{\mu}_0,\vec{\mu}_{\infty})\]
where the new notations $\rho^0,\rho^{\infty}$ indicate the number of markings with orbifold structure over $0,\infty$ respectively, and $\vec{\mu}_0\in (\mathbb{Z}^*)^{\rho^0} ,\vec{\mu}_{\infty}\in (\mathbb{Z}^*)^{\rho^{\infty}}$ are the corresponding partitions of $d$. We still use 
\begin{equation}\label{eqn:orbiline}
\langle \underline{\epsilon},  \underline{\alpha} \rangle_{\Gamma}^{\cC_{r,s}}
\end{equation}
to denote the corresponding orbifold GW-invariants of $\cC_{r,s}$ (see Definition \ref{def:orbifoldinv}). Note that here the insertion $\underline{\epsilon}$ can be written as a union of insertions \[\underline{\epsilon_0}\in \left(\mathbb{C}[\bar{\psi}]\otimes H^*(\text{point})\right)^{\rho^0},\quad \underline{\epsilon_{\infty}}\in \left(\mathbb{C}[\bar{\psi}]\otimes H^*(\text{point})\right)^{\rho^{\infty}} \]
corresponding to markings over $0,\infty$ respectively. 

For large enough $r$ and $s$, it is still true\footnote{Since $0$ and $\infty$ are two distinct points, the argument of \cite{FWY2} still works.} that 
\[r^{\rho^0_-}s^{\rho^{\infty}_-}\langle \underline{\epsilon},  \underline{\alpha} \rangle_{\Gamma}^{\cC_{r,s}}\]
depends polynomially on $r$ and $s$, where $\rho^0_-,\rho^{\infty}_-$ are the number of negative integers in $\vec{\mu}_0,\vec{\mu}_{\infty}$ respectively.
So we can still extract the constant term and get the definition for the relative invariant of the pair $(\PP^1,\{0,\infty\})$ with possibly negative contact:
\[\langle \underline{\epsilon},  \underline{\alpha} \rangle_{\Gamma}^{(\PP^1,\{0,\infty\})}\coloneqq\left [\lim_{r,s\to \infty} r^{\rho^0_-}s^{\rho^{\infty}_-}\langle \underline{\epsilon},  \underline{\alpha} \rangle_{\Gamma}^{\cC_{r,s}}\right]_{r^0s^0}.\]
If we set $s=1$, then $\infty$ is no longer a stacky point. So $\rho^{\infty}=0$ and $\vec{\mu}_{\infty}$ is empty. For sufficiently large $r$, $r^{\rho^0_-}\langle \underline{\epsilon},  \underline{\alpha} \rangle_{\Gamma}^{\cC_{r,1}}$
becomes a polynomial in $r$. The constant term 
gives the definition for the relative invariant of $(\PP^1,0)$ with possibly negative contact:
\[\langle \underline{\epsilon},  \underline{\alpha} \rangle_{\Gamma}^{(\PP^1,0)}\coloneqq\left [\lim_{r\to \infty} r^{\rho^0_-}\langle \underline{\epsilon},  \underline{\alpha} \rangle_{\Gamma}^{\cC_{r,1}}\right]_{r^0}.\]
\begin{rmk}
\label{rem:notation}
If the insertion $\underline{\epsilon}$ is trivial, that is, $\underline{\epsilon}=(1,1,\cdots,1)$, then 
the relative invariant $\langle \underline{\epsilon},  \underline{\alpha} \rangle_{\Gamma}^{(\PP^1,\{0,\infty\})}$ with $\underline{\alpha}=(\bar{\psi}^{a_1}\alpha_1, ..., \bar{\psi}^{a_n} \alpha_n)$ could also be denoted as\footnote{When there are no negative contact orders, such notation has been used by Okounkov and Pandharipande in their famous trilogy \cites{OP1,OP2,OP3}.}
\[\left\langle\vec{\mu}_0\left|\prod_{i=1}^n\tau_{a_i}(\alpha_i)\right|\vec{\mu}_{\infty}\right\rangle.\]
Here we omit the notation $\Gamma$, because except for the genus the other data can be directly read from the above notation and, moreover, the genus can be determined from the dimension constraint. Similarly we use 
\[\left\langle\vec{\mu}_0\left |\prod_{i=1}^n\tau_{a_i}(\alpha_i)\right.\right\rangle,\quad \left\langle \left.\prod_{i=1}^n\tau_{a_i}(\alpha_i)\right|\vec{\mu}_{\infty}\right\rangle\]
to denote the relative invariant of the pair $(\PP^1,0)$, $(\PP^1,\infty)$ respectively when the insertion $\underline{\epsilon}$ is trivial.
\end{rmk}

\section{Orbifold GW-invariants via the infinite wedge}\label{sec:infwedg}
In this section, we want to give a brief introduction of Johnson's operator formula in the half-infinity wedge space for the $\tau$ function \eqref{eqn:potential}.

The half-infinity wedge space is deeply related with the representation theory of the symmetric group. In his PhD thesis \cite{J1}, Johnson applied the localization formula to the equivariant orbifold GW-potential \eqref{eqn:potential} to express it as a potential of Hurwitz-Hodge integrals. The orbifold ELSV formula \cite{JPT} expresses these integrals as double Hurwitz numbers. As these numbers are naturally connected with the character theory of the symmetric group, Johnson was able to express the equivariant orbifold potential as an operator expectation on the half-infinity wedge space. We now give a short introduction to the half-infinity wedge space.  This section is based on \cite{OP1}. 

Let $V$ be a vector space with basis $\{\underline{k}\}$ indexed by the half-integers:
\begin{equation*}
    V= \bigoplus_{k\in \mathbb{Z}+\frac{1}{2}} \mathbb{C} \underline{k}.
\end{equation*}
For each subset $S=\{s_1> s_2> ...\}\subset \mathbb{Z}+\frac{1}{2}$ satisfying:\\

(i) $S_{+}= S \setminus (\mathbb{Z}_{\leq0}-\frac{1}{2})$ is finite,\\

(ii) $S_{-}= (\mathbb{Z}_{\leq0}-\frac{1}{2}) \setminus S$ is finite,\\
\\
we denote by $v_S$ the following infinite  wedge product:
\begin{equation*}
    v_S=\underline{s_1} \wedge \underline{s_2} \wedge ...
\end{equation*}
Then we define 
\begin{equation*}
    \Lambda^{\frac{\infty}{2}} V \coloneqq \bigoplus \mathbb{C} v_S.
\end{equation*}
It is called the half-infinity wedge space. Let $(\cdot, \cdot)$ be the unique inner product on $\Lambda ^{\frac{\infty}{2}} V$ such that those $v_S$ form an orthonormal basis. We now define some important operators on this space.
\begin{defn}
       Let $k\in \mathbb{Z}+\frac{1}{2}$.
       We define the operator $\Psi_k$ by the equation 
       \begin{equation*}
       \Psi_k v := \underline{k} \wedge v ,
       \end{equation*}
       for all $v\in \Lambda ^{\frac{\infty}{2}} V$. 
\end{defn}

Let $\Psi_j^*$ be the adjoint of $\Psi_j$ with respect to the previously defined inner product.

\begin{defn}
    We define normally ordered products:
    \begin{equation*}
        :\Psi_i \Psi_j^*:=
        \begin{cases} 
        \Psi_i \Psi_j^*,\text{ if } j>0, \\
        -\Psi_j^* \Psi_i,\text{ if } j<0. \\
        \end{cases}
    \end{equation*}
    
\end{defn}
We now define a representation of the Lie algebra $\mathfrak{gl}(V)$ on $\Lambda ^{\frac{\infty}{2}} V$.
For $i,j\in \mathbb{Z}+\frac{1}{2}$, let $E_{ij}$ denote the standard basis element in $\mathfrak{gl}(V)$. We then define a projective representation by the following assignment:
\begin{equation*}
    E_{ij} \mapsto \ :\Psi_i \Psi_j^*:
\end{equation*}
\begin{defn}
    We define the charge operator $C$ to be the operator on $\Lambda ^{\frac{\infty}{2}} V$ corresponding to the identity matrix $\sum_{k\in \mathbb{Z}+\frac{1}{2}} E_{kk}$ via the previously defined representation.
\end{defn}
The kernel of $C$ is called the zero charge space and is spanned by the vectors 
\begin{equation*}
    v_\lambda=\lambda_1-\frac{1}{2} \wedge \lambda_2-\frac{3}{2} \wedge \lambda_3-\frac{5}{2} \wedge ...,
\end{equation*}
where $\lambda$ varies over partitions of nonnegative numbers. We denote the zero charge space by $\Lambda_0 ^{\frac{\infty}{2}} V$.
\begin{defn}
    We define the energy operator $H$ to be the operator on the zero charge space corresponding to the matrix $\sum_{k\in \mathbb{Z}+\frac{1}{2}} k E_{kk}$.
\end{defn}
One can check that we have the following equation:
\begin{equation*}
    H v_{\lambda} = \lvert \lambda \rvert v_{\lambda}.
\end{equation*}
We call the eigenspace of $H$ corresponding to a nonnegative integer $l$ the energy $l$ subspace. The vacuum vector
\begin{equation*}
    v_{\emptyset}=\underline{-\frac{1}{2}} \wedge \underline{-\frac{3}{2}} \wedge \underline{-\frac{5}{2}} \wedge ....,
\end{equation*}
is an eigenvector of $H$ with eigenvalue $0$. The \emph{vacuum expectation} of an operator $A$ on $\Lambda ^{\frac{\infty}{2}} V$ is defined by the following inner product:
\begin{equation*}
    \langle A \rangle := (A v_{\emptyset}, v_{\emptyset})
\end{equation*}
We now define an important Laurent series whose coefficients are operators on $\Lambda ^{\frac{\infty}{2}} V$.
\begin{defn}\label{def:E-op}
    For $j\in \mathbb{Z}$, we define
    \begin{equation*}
    \mathcal{E}_j(z)\coloneqq\sum_{k\in \mathbb{Z}+\frac{1}{2}} e^{z(k-\frac{j}{2})} E_{k-j,k} + \frac{\delta_{j,0}}{e^{\frac{z}{2}}-e^{-\frac{z}{2}}}.
    \end{equation*} 
\end{defn}
It can be shown that the we have the following equation:
\begin{equation*}
    \mathcal{E}_j(z)^*=\mathcal{E}_{-j}(z),
\end{equation*}
for all $j\in \mathbb{Z}$.
We now state an important commutation relation which will be used throughout this paper. First we define:
\begin{gather}
    \mathcal{S}(z)\coloneqq\frac{e^{\frac{z}{2}}-e^{-\frac{z}{2}}}{z}\label{eqn:S-fun}\\
    \varsigma (z)\coloneqq z \mathcal{S}(z) \label{eqn:vsig}
\end{gather}
Then we can show for $j$ and $k$ arbitrary integers that the following equation is true:
\begin{equation}
\label{eq:commrel}
    [\mathcal{E}_j(z), \mathcal{E}_k(w) ]= 
    \varsigma ( jw-kz) \mathcal{E}_{j+k}(z+w).
\end{equation}
\begin{defn}
For $j\neq 0$ we define
\begin{equation}
    \alpha_j\coloneqq \mathcal{E}_{j}(0).
\end{equation}
\end{defn}
From Eq. ~\eqref{eq:commrel} we get the following relations: 
For $j\neq0$ and $k\neq0$ we have
\begin{equation}
\label{eq:commrel1}
    [\alpha_j, \alpha_k]=j \delta_{j,-k}.
\end{equation}
We also have
\begin{equation}
\label{eq:commrel2}
    [\alpha_j, \mathcal{E}_k[0]]=
    \begin{cases}
    j \alpha_{j+k}, \text{ if } j+k\neq0, \\
    j C, \text{ if } j+k=0\\
    \end{cases}
\end{equation}
where $C$ is the charge operator.
\begin{defn}
Let $A$ be an operator on $\Lambda^{\frac{\infty}{2}} V$. Let $l$ be an integer. We say $A$ has energy $l$ if
\begin{equation*}
    [H,A]=l A.
\end{equation*}
\end{defn}

\begin{rmk}\label{rmk:energy}
Note that up to some constant, $H$ is just the $z$-coefficient of $\mathcal{E}_0(z)$. So by the commutation relation \eqref{eq:commrel}, we see that $\mathcal{E}_j(z)$ has energy $-j$. This implies all the coefficients of $\mathcal{E}_j(z)$ have energy $-j$. In particular, we know that $\alpha_j$ has energy $-j$.
\end{rmk}

\begin{rmk}
Johnson deals with a more general target orbicurve in his thesis. The orbifold $\mathcal{C}_{r,s}$ is what he calls effective which means that the generic point has trivial isotropy group. He works with a generalization of $\mathcal{C}_{r,s}$. Let us call it $\mathcal{X}$. This $\mathcal{X}$ is an orbifold whose underlying topological space is $\mathbb{P}^1$ with orbifold structure around $0$ and $\infty$ but the isotropy group of the generic point is allowed to be a nontrivial group $K$. Such an orbifold is called ineffective (see \cite[Chapter II]{J1}).
\end{rmk}
We now introduce important operators which are necessary to state a theorem from \cite{J1}. Due to the previous remark we will set $K$ to be the trivial group in all the statements and definitions we take from \cite{J1}. We recall the Pochhammer symbol:
\begin{equation}\label{eqn:Psymb}
    (1+x)_n\coloneqq
    \begin{cases}
    (x+1) ... (x+n),\, n\geq0, \\
    (x(x-1) ... (x+n+1))^{-1},\, n\leq 0.
    \end{cases}
\end{equation}
where $n$ is an integer.
\begin{defn}\label{def:A-ops}
    Let $a$ and $r$ be integers with $0\leq a \leq r-1$. If $a=0$ we define the following power series:
    \begin{equation*}
        \boldsymbol{A}_{0/r}(z,u,t)\coloneqq\frac{1}{u} \mathcal{S}(ruz)^{\frac{tz}{r}} \sum_{i=-\infty}^{\infty} \frac{(tz\mathcal{S}(ruz))^i}{(1+\frac{tz}{r})_i} \mathcal{E}_{ir}(uz).
    \end{equation*}
    And if $a>0$ we define 
    \begin{equation*}
        \boldsymbol{A}_{a/r}(z,u,t)\coloneqq\frac{t^{\frac{a}{r}}}{u}\frac{z}{tz+a} \mathcal{S}(ruz)^{\frac{tz+a}{r}} \sum_{i=-\infty}^{\infty} \frac{(tz\mathcal{S}(ruz))^i}{(1+\frac{tz+a}{r})_i} \mathcal{E}_{ir+a}(uz).
    \end{equation*}
We also define $\boldsymbol{A}^*_{a/r}$ by taking the adjoint of $\boldsymbol{A}_{a/r}$ and replacing $t$ by $-t$. 
\end{defn}
Note that these definitions come from \cite[Equation (5.19)]{J1}. We use $\boldsymbol{A}_{i/r}[k]$ to denote the $z^{k+1}$-coefficient in the expansion of $\boldsymbol{A}_{i/r}$.
\begin{thm}[\cite{J1,J}]\label{thm:orbiP1}
The full equivariant orbifold GW-potential of $\cC_{r,s}$ can be expressed as an operator expectation in the following way:
\begin{equation}\label{eqn:orbiP1}
    \tau=\left \langle e^{\sum_{k,i} x_k(i) \boldsymbol{A}_{i/r}[k]} e^{\frac{t \alpha_r}{ur}} \left(\frac{q}{t^{1/r}(-t)^{1/s}}\right)^{H} e^{\frac{-t\alpha_{-s}}{us}} e^{\sum_{k,j} x^*_k(j)\boldsymbol{A}^*_{j/s}[k]}\right\rangle
\end{equation}
\end{thm}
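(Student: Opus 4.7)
The plan is to follow Johnson's original approach via virtual localization, Hurwitz theory, and the infinite wedge formalism. First I would apply the virtual localization formula to the equivariant potential \eqref{eqn:potential}. The $\C^*$-fixed loci in $\bM_{g,m}(\cC_{r,s},d)$ are indexed by decorated bipartite graphs: vertex contributions come from contracted subcurves mapping to the stacky points $0$ or $\infty$, and edges correspond to rational components mapping to $\PP^1$ with prescribed ramification profiles $\mu$ (equivalently, degrees in the root stack directions). Standard bookkeeping identifies the vertex contributions with integrals of $\bar\psi$-classes against Hurwitz--Hodge classes on moduli of twisted stable curves with gerbe markings of prescribed ages, while the edge contributions contribute the propagator together with the automorphism factors $1/(\mu \cdot r)$ at $0$ and $1/(\mu \cdot s)$ at $\infty$.

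Next I would invoke the orbifold ELSV formula of \cite{JPT} to rewrite each vertex Hurwitz--Hodge integral as a double Hurwitz number for the cyclic group $\Z/r$ (resp.\ $\Z/s$), where one side of the ramification is prescribed by the edges of the localization graph and the other side is determined by the descendants and the equivariant variable $t$. Summing over all graphs converts the sum to a sum over pairs of partitions $(\lambda,\mu)$, with the edges glueing the two Hurwitz enumerations. At this point the character-theoretic formula for double Hurwitz numbers of the symmetric group enters, expressing each number as a sum over $S_n$-characters of products of central characters. The standard boson--fermion dictionary then recasts the character sum as a matrix element $(A\,v_\emptyset, B\,v_\emptyset)$ on $\Lambda^{\frac{\infty}{2}}V$, where the propagator that transports excitations from the $0$-side to the $\infty$-side gets identified with the $q^H$-operator, while the two symmetric functions that act on $v_\emptyset$ come from the two endpoints.

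The key calculation, and the main obstacle, is the vertex operator identification: one must prove that after summing over the age $a$, the number of gerbe markings, and the insertion of $\bar\psi^k$, the total contribution at a vertex over $0$ assembles precisely into the generating series $\boldsymbol A_{a/r}(z,u,t)$ of Definition \ref{def:A-ops}. Concretely, the Pochhammer symbol $(1+(tz+a)/r)_i$ must emerge from the Hodge-integral denominators (the characteristic class of the Hodge bundle appearing in the ELSV-type formula, evaluated at $-t$), the function $\mathcal{S}(ruz)^{(tz+a)/r}$ must emerge from the edge node smoothing and the age shift, and the operators $\mathcal{E}_{ir+a}(uz)$ must arise from the central character expressing the Hurwitz number at the $0$-vertex. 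This matching is essentially a delicate combinatorial identity between Frobenius-type formulas and the Taylor expansion of $\boldsymbol A_{a/r}$; the $a=0$ versus $a>0$ distinction in Definition \ref{def:A-ops} tracks whether or not the marking has trivial stabilizer.

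Finally I would handle the edge/propagator contributions. The term $e^{t\alpha_r/(ur)}$ accounts for degree-contributing components attached to $0$ that have no descendant insertions (the unstable "bubbles" of weight $r$ fed by the fixed point $0$), and symmetrically $e^{-t\alpha_{-s}/(us)}$ for those attached to $\infty$; the factor $(q/(t^{1/r}(-t)^{1/s}))^H$ records the degree (via $q^H$) together with the normalizing automorphism weights $t^{1/r}$ and $(-t)^{1/s}$ coming from the smoothing tangent weights at the stacky nodes $0$ and $\infty$. Putting the vertex operators, propagator, and boundary operators together in the order dictated by the bipartite graph yields exactly the vacuum expectation \eqref{eqn:orbiP1}. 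I would finish by verifying the overall normalization $u^{2g-2}$ against the Euler characteristic of the virtual cycle, which is automatic once the $u$-weights in the definitions of $\boldsymbol A_{a/r}$ and $\alpha_{\pm}$ are tracked through the character formula.
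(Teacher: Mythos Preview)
The paper does not prove this theorem: it is quoted from Johnson's thesis \cite{J1} and its published version \cite{J}, and the surrounding text in Section~\ref{sec:infwedg} only summarizes the ingredients (virtual localization, the orbifold ELSV formula of \cite{JPT}, and the character/infinite-wedge dictionary) before stating the result. So there is no ``paper's own proof'' to compare against; the relevant benchmark is Johnson's argument itself.

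Your outline tracks Johnson's strategy accurately at the level of headlines: localize, convert vertex contributions to Hurwitz--Hodge integrals, apply orbifold ELSV to reach $\Z/r$-double Hurwitz numbers, and pass to the infinite wedge via the character formula. Where it remains a sketch rather than a proof is precisely the step you flag as ``the main obstacle'': the identification of the vertex generating series with $\boldsymbol{A}_{a/r}(z,u,t)$. In Johnson's argument this is not a single combinatorial identity but a sequence of manipulations---tracking the dilaton/string-type shifts, the $r$-spin Hurwitz generating series, and the precise normalization of the completed-cycle operators---culminating in \cite[Equation (5.19)]{J1}. Your description of where the Pochhammer factors, the $\mathcal{S}(ruz)^{(tz+a)/r}$ prefactor, and the $\mathcal{E}_{ir+a}$ operators should come from is plausible but not yet a derivation; in particular the role of $e^{t\alpha_r/(ur)}$ is not ``unstable bubbles'' in the usual sense but rather the exponential of the transposition/completed-cycle operator that generates the Hurwitz side, and getting the sign and the $1/r$ normalization right requires the explicit $\Z/r$-Hurwitz formula. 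If you intend to actually carry this out, you will need to reproduce those computations from \cite{J1} rather than assert the matching.
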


\section{Operator formulae for relative invariants}\label{sec:opfm}
In this section, we want to study the large $r,s$ behavior of the operator formula \eqref{eqn:orbiP1} so as to get operator formulae for relative invariants of  $(\PP^1,0)$ and $(\PP^1,0\cup \infty)$. Note that relative invariants of $(\PP^1,0)$ can be studied by setting $s=1$ and taking $r$ sufficiently large.

We will mainly consider two cases. In one case, we will study the operator formula with no large ages\footnote{Ages in the form of $\frac{r+b}{r}$, $b<0$.}. We will then recover the operator formula for equivariant relative invariants by Okounkov and Pandharipande \cite{OP3}. In another case, we will study the operator formula in the non-equivariant specialization. The limiting behavior of the operator formula will then produce operator formulae for (non-equivariant) relative invariants
with negative contact.


We will begin with an analysis of the limiting behavior of the operator formula \eqref{eqn:orbiP1}.

\subsection{Limiting behavior of the operator formula}

We only need to analyse the large $r$ behavior of \eqref{eqn:orbiP1} since the analysis for large $s$ behavior is similar.

By Theorem $\ref{thm:orbiP1}$ we know that the equivariant relative invariants of $(\PP^1,0)$ with fixed partition $\vec{\mu}_0$ of degree $d$:
\[\vec{\mu}_0=(\mu_1^+, \dots, \mu_{l_0}^+, \mu_1^-, \dots, \mu_{m_0}^-),\,\mu_i^+>0,\,\mu_j^-<0\]
can be determined by studying the following vacuum expectation\footnote{The ordering of these operators $\bA_{\bullet/r}$, $\bA_{\bullet/s}^*$ will not affect the vacuum expectation of \eqref{eqn:vacexp} by the commutator formula \cite[Lemma V.4]{J1}. But the specific ordering we chosen here will be important to deduce the operator formulae for relative invariants.}:
\begin{equation}\label{eqn:vacexp}
\begin{split}
\left\langle\prod_{i=1}^{l_0} \bA_{\mu_i^+/r}(z_i)\prod_{j=1}^{m_0}\bA_{(r+\mu_j^-)/r}(z_{l_0+j})\prod_{k=1}^{n_0}\bA_{0/r}(z_{l_0+m_0+k})e^{\frac{t \alpha_r}{ur}} \left(\frac{q}{t^{1/r}(-t)}\right)^{H} e^{\frac{-t\alpha_{-1}}{u}}\prod_{k=1}^{n_{\infty}}\bA_{0/1}^*(w_k)\right\rangle
\end{split}
\end{equation}
in the large $r$ limit. Note that we set $s=1$ in \eqref{eqn:orbiP1}. Here for simplicity we omit variables $u,t$ in the operators $\bA_{\bullet/r}$, $\bA_{0/1}^*$.

To extract the degree $d$ orbifold invariants from $\eqref{eqn:vacexp}$ we have to extract the $q^d$-term. To do this we first note that 
\[\left (\frac{q}{t^{1/r}(-t)} \right )^H = \sum_{l\geq0} \frac{q^l}{t^{l/r}(-t)^l} P_l,\]
where the operator $P_l$ is the orthogonal projection to the subspace of $\wedge^{\frac{\infty}{2}}V$ with energy $l$. Therefore we
only need to consider those operators in the expansion of 
\begin{equation}\label{eqn:s-energy}
e^{\frac{-t\alpha_{-1}}{u}}\prod_{k=1}^{n_{\infty}}\bA_{0/1}^*(w_k)
\end{equation}
with energy $d$, and those operators in 
\begin{equation}\label{eqn:r-energy}
\prod_{i=1}^{l_0}\bA_{\mu_i^+/r}(z_i)\prod_{j=1}^{m_0}\bA_{(r+\mu_j^-)/r}(z_{l_0+j})\prod_{k=1}^{n_0}\bA_{0/r}(z_{l_0+m_0+k})e^{\frac{t \alpha_r}{ur}}
\end{equation}
with energy $-d$. Those operators in \eqref{eqn:r-energy} with energy $-d$ have the following form:
\[Q_{d-kr}\left(\frac{t\alpha_{r}}{ur}\right)^k,\quad k\geq 0\]
where $Q_{d-kr}$ is a certain operator with energy $kr-d$. If $r>d$, we know that
the energy of the dual $Q^*_{d-kr}$ will be negative except when $k=0$. Note that the action of a negative energy operator on the vacuum vector $v_{\emptyset}$ is zero. So we can ignore the term $e^{\frac{t\alpha_{r}}{ur}}$ when we study the large $r$ behavior of the $q^d$-term of \eqref{eqn:vacexp}. 
\begin{rmk}
We will often meet operators $A$ on $\Lambda ^{\frac{\infty}{2}} V$ with energy of the form $kr+l$. We then call $k$ to be the \emph{$r$-energy} of $A$. For example, the $r$-energy of the operator $\mathcal{E}_{ir+a}$ is $-i$ by Remark \ref{rmk:energy}.
Similarly, if the energy of an operator $A$ is of the form $k' s+l'$, then we call $k'$ to be the \emph{$s$-energy} of $A$.
\end{rmk}

Let 
\begin{eqnarray*}
\bA_{\vec{\mu}_0,n_0} & \coloneqq & \prod_{i=1}^{l_0}\bA_{\mu_i^+/r}(z_i)\prod_{j=1}^{m_0}\bA_{(r+\mu_j^-)/r}(z_{l_0+j})\prod_{k=1}^{n_0}\bA_{0/r}(z_{l_0+m_0+k})\\
\end{eqnarray*}
From the previous discussion, we know that the study of the large $r$ limit of the $q^d$-term of \eqref{eqn:vacexp} is the same as the study of the large $r$ limit of 
\begin{equation}\label{eqn:midop}
\left\langle \bA_{\vec{\mu}_0,n_0}\left(\frac{1}{t^{d/r}(-t)^d}\right)P_d e^{\frac{-t\alpha_{-1}}{u}}\prod_{k=1}^{n_{\infty}}\bA_{0/1}^*(w_k)\right\rangle.
\end{equation}

In order to study the large $r$ limit of \eqref{eqn:midop}, we need the following structural result. First, we need to introduce our notation convention. 
For $r>|i|$, we set
\begin{equation}\label{eqn:A-new}
\boldsymbol{A}_i\coloneqq
\begin{cases}
\boldsymbol{A}_{i/r},\, i\geq 0,\\
\boldsymbol{A}_{(r+i)/r},\, i<0.
\end{cases}
\end{equation}
Let $\vec{a}=(a_1, ..., a_n)$ be an element in $\mathbb{Z}^n$. For $r>\sum_i |a_i|$, we set
\[\boldsymbol{A}_{\vec{a}}\coloneqq \prod_i \boldsymbol{A}_{a_i}(z_i,u,t).\]
For $Q=\{q_1,\cdots,q_s\}$ an ordered subset of $\{1,2,\cdots,n\}$, we will use the following convention:
\[\vec{a}_{Q}=(a_{q_1},\cdots,a_{q_s}),\quad z_{Q}=(z_{q_1},\cdots,z_{q_s})\]
and
\[|\vec{a}_Q|=\sum a_{q_i},\quad |z_{Q}|=\sum z_{q_i}. \]
Now we are ready to state our key lemma which will be established in Appendix \ref{sec:interaction}.
\begin{lem}\label{lem:basic}
Let $L_1$, $L_2$ be any two operators on $\Lambda ^{\frac{\infty}{2}} V$ with fixed energies (do not depend on $r$) and $\vec{a}\in \Z^n$. Then for sufficiently large $r$, we have the following formula:
\begin{equation}\label{eqn:keylem}
\langle L_1 \boldsymbol{A}_{\vec{a}} L_2\rangle=\sum_{P}\left \langle L_1  \left(\prod_{i=1,\cdots,l(P)}^{\longrightarrow}f_{\vec{a}_{P_i}}(z_{P_i},u,t,r)\mathcal{E}_{|\vec{a}_{P_i}|}(u|z_{P_i}|)\right)  L_2\right\rangle
\end{equation}
where $P=P_1\sqcup\cdots\sqcup P_{l(P)}$\footnote{Here elements in each $P_i$ are always ordered from small to large. Then each $P_i$ becomes an ordered subset of $\{1,2,\cdots,n\}$. So $\vec{a}_{P_i}$ and $z_{P_i}$ make sense.} runs over all the partitions of $\{1,2,\cdots,n\}$,
the operators $\mathcal{E}_{|\vec{a}_{P_i}|}$ in the product are ordered left-to-right by increasing smallest integers of $P_i$. These functions $f_{\vec{a}_{P_i}}$ do not depend on the choice of $L_1$, $L_2$ but only on the ordered set $\vec{a}_{P_i}$ (see Equation \eqref{eqn:unifunapp} in the proof). 
\end{lem}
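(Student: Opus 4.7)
The plan is to prove Lemma~\ref{lem:basic} by induction on $n$, starting from the explicit mode expansion
\begin{equation*}
\boldsymbol{A}_{a}(z) = \sum_{i\in\mathbb{Z}} c_i^{(a)}(z,u,t,r)\,\mathcal{E}_{ir+a}(uz)
\end{equation*}
read off from Definition~\ref{def:A-ops}, where each $c_i^{(a)}$ is an explicit scalar function of $z,u,t,r$. Substituting this into $\boldsymbol{A}_{\vec a}$ turns the left-hand side of \eqref{eqn:keylem} into a multi-sum over $\vec i\in\mathbb{Z}^n$ of known scalar coefficients times correlators $\langle L_1\,\prod_j \mathcal{E}_{i_j r + a_j}(uz_j)\,L_2\rangle$. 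The entire argument is then to reorganise this multi-sum into the partition form of the right-hand side, using only the commutation relation \eqref{eq:commrel} and a vanishing bound for large $r$.

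The base case $n=1$ is immediate from Remark~\ref{rmk:energy}: the operator $\mathcal{E}_{ir+a}(uz)$ carries $r$-energy $-i$, so once $r$ exceeds the total energy range of $L_1,L_2$ only the $i=0$ mode can contribute, and one sets $f_{(a)}=c_0^{(a)}$. For the inductive step I would iteratively apply \eqref{eq:commrel} to adjacent pairs of the product $\prod_j \mathcal{E}_{i_j r + a_j}(uz_j)$. Each application to a neighbouring pair decomposes it as
\begin{equation*}
\mathcal{E}_{j}(w)\,\mathcal{E}_{j'}(w') = \mathcal{E}_{j'}(w')\,\mathcal{E}_{j}(w) + \varsigma(j w' - j' w)\,\mathcal{E}_{j+j'}(w+w'),
\end{equation*}
namely a ``swap'' plus a ``fusion''. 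A systematic left-to-right resolution of all such choices expresses the product as a sum indexed by set partitions $P$ of $\{1,\dots,n\}$, in which each block $P_i$ is fully collapsed into a single $\mathcal{E}_{(\sum_{j\in P_i} i_j)r + |\vec a_{P_i}|}(u|z_{P_i}|)$ whose coefficient is built from the iterated $\varsigma$-factors recording the fusion order; the residual swaps produce precisely the left-to-right-by-smallest-index ordering of the blocks demanded in the statement.

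The base case vanishing argument then applies block by block: for $r$ sufficiently large, any block with $\sum_{j\in P_i} i_j\neq 0$ contributes an $\mathcal{E}$-factor of nonzero $r$-energy, and the surrounding bounded-energy $L_1,L_2$ force the correlator to vanish. Restricting each inner $\vec i_{P_i}$-sum to the locus $\sum_{j\in P_i} i_j=0$ and collecting the scalar data (the $c_{i_j}^{(a_j)}$'s together with the $\varsigma$-coefficients summed over all admissible fusion orders) defines the function $f_{\vec a_{P_i}}(z_{P_i},u,t,r)$, which by construction depends only on the block data. The main obstacle, and the technical content to be supplied in Appendix~\ref{sec:interaction}, is the combinatorial bookkeeping of the iterated commutator expansion: one must verify that the swap/fuse choices really repackage into a sum over \emph{set partitions} (rather than a more refined structure such as linked or ordered partitions) with coefficients that factorise block by block, and that the resulting inner sum defining $f_{\vec a_{P_i}}$ truncates to a well-defined function of $r$ for $r$ large.
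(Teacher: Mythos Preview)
Your overall strategy—expand each $\boldsymbol{A}_{a_j}$ into its $r$-energy modes and then reorganise the resulting product of $\mathcal{E}$-operators via the commutation relation \eqref{eq:commrel}—matches the paper's, but the mechanism you sketch for producing the partition structure has a genuine gap.

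The problematic step is your claim that ``any block with $\sum_{j\in P_i} i_j\neq 0$ contributes an $\mathcal{E}$-factor of nonzero $r$-energy, and the surrounding bounded-energy $L_1,L_2$ force the correlator to vanish.'' This is false: the \emph{other blocks} in the product can carry compensating $r$-energy, so bounded $L_1,L_2$ alone do not kill the term. For a concrete counterexample take $n=2$, $L_1=L_2=\mathrm{id}$, $a_1=a_2=0$, and the mode indices $(i_1,i_2)=(1,-1)$. With the trivial partition $P=\{\{1\},\{2\}\}$ your recipe would discard the term because each block has nonzero $r$-energy, yet
\[
\langle \mathcal{E}_{r}(uz_1)\,\mathcal{E}_{-r}(uz_2)\rangle
=\varsigma\bigl(r u(z_1+z_2)\bigr)\,\langle \mathcal{E}_0(u(z_1+z_2))\rangle
=\frac{\varsigma(ru(z_1+z_2))}{\varsigma(u(z_1+z_2))}\neq 0.
\]
What actually happens is that this contribution must be \emph{reassigned} to the partition $\{1,2\}$ (it becomes part of $f_{(0,0)}$), not thrown away. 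Your swap/fusion expansion, as stated, gives no mechanism for this reassignment: a pure ``swap'' keeps the two factors separate and never forces them into the same block, so the term sits in the wrong place and is then wrongly declared zero. Relatedly, the phrase ``systematic left-to-right resolution'' is not an algorithm; unrestricted iteration of $XY=YX+[X,Y]$ does not terminate and does not by itself yield a sum over set partitions.

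The paper fixes exactly this by making the commutator procedure \emph{directional}: one always selects the leftmost factor of positive $r$-energy and commutes it leftwards, discarding the fully-commuted term because a positive-$r$-energy operator reaching $L_1$ annihilates the correlator for large $r$. This is the recursive operator $F_n$ in Appendix~\ref{sec:interaction}. The resulting iterated brackets are tracked by interaction diagrams, and the crucial technical point (Lemma~\ref{lem:keylmapp}) is that a diagram survives if and only if each of its connected components survives on its own. That lemma is what forces each block to carry zero $r$-energy and what makes the sum factorise over set partitions; your block-by-block vanishing claim is attempting to shortcut precisely this step, and it does not go through without the directional algorithm.
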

\begin{rmk}
Note that the operators $\mathcal{E}_{|\vec{a}_{P_i}|}$ do not commute, so it is important to indicate their ordering.
If we assume that the smallest integer in $P_i$ is always smaller than that of $P_j$, $\forall i<j$, then we could also denote the ordered product as
\[\prod_{i=1}^{l(P)}f_{\vec{a}_{P_i}}(z_{P_i},u,t,r)\mathcal{E}_{|\vec{a}_{P_i}|}(u|z_{P_i}|).\]
\end{rmk}

Recall that we are interested in relative invariants. Therefore we consider
\[
r^{m_0} \left\langle \bA_{\vec{\mu}_0,n_0}\left(\frac{1}{t^{d/r}(-t)^d}\right)P_d e^{\frac{-t\alpha_{-1}}{u}}\prod_{k=1}^{n_{\infty}}\bA_{0/1}^*(w_k)\right\rangle.
\]
for $r$ sufficiently large. Note that $m_0$ is the number of negative integers in $\vec{\mu}_0$. Since $P_d e^{\frac{-t\alpha_{-1}}{u}}\prod_{k=1}^{n_{\infty}}\bA_{0/1}^*(w_k)$ is an operator with fixed energy $d$, we can apply Lemma~\ref{lem:basic} and get for sufficiently large $r$
\[
\frac{r^{m_0}}{t^{d/r} (-t)^{d}}\left\langle \sum_{P} \prod_{i=1}^{l(P)}f_{\vec{a}_{P_i}}(z_{P_i},u,t,r)\mathcal{E}_{|\vec{a}_{P_i}|}(u|z_{P_i}|) e^{\frac{-t\alpha_{-1}}{u}}\prod_{k=1}^{n_{\infty}}\bA_{0/1}^*(w_k)\right\rangle
\]
where $\vec{a}= (\vec{\mu}_0, \underbrace{0, ..., 0}_{n_0})$. Note that 
$$\sum_{P} \prod_{i=1}^{l(P)}f_{\vec{a}_{P_i}}(z_{P_i},u,t,r)\mathcal{E}_{|\vec{a}_{P_i}|}(u|z_{P_i}|)$$
is an operator with energy $-d$. This means only the energy $d$ terms on the right side of $P_d$ actually contribute to the operator expectation. So we can omit the projection operator $P_d$. 

For each partition $P$, if we could derive an explicit formula for the  $r^0$-term of 
\begin{equation}\label{eqn:f_cons}
r^{m_0}\prod_{i=1}^{l(P)}f_{\vec{a}_{P_i}}(z_{P_i},u,t,r),
\end{equation}
then we would get an explicit formula for the equivariant relative GW-theory of $(\mathbb{P}^1, 0)$ with negative contact orders. But at the moment we are only able to determine the $r^0$-term of \eqref{eqn:f_cons} for some special $\vec{a}$.

\subsection{Operator formulae for equivariant relative invariants}\label{sec:opeqv}
The first special case where the $r^0$-term of \eqref{eqn:f_cons} can be explicitly computed is  $\vec{a}=(0, ..., 0)$. The determination of the $r^0$-terms of \eqref{eqn:f_cons} for all such $\vec{a}$ will be the key step to establish the operator formulae for the equivariant relative theory of the cap and tube (without negative contact). We start from the cap case.
\subsubsection{Cap case}\label{section:equicap}
We mainly consider equivariant relative invariants of $(\PP^1,0)$. The case of $(\PP^1,\infty)$ can be similarly determined which provides a new way to the operator formula in \cite[Proposition 3.2]{OP3} (see Remark \ref{rmk:OPfm}).

So we are interested in the following generating series of equivariant GW-invariants of $(\PP^1,0)$:
\begin{equation}\label{eqn:eqvcap}
G(\vec{\mu}|z_1,\cdots,z_n,w_1,\cdots,w_m)\coloneqq\sum_{k_i,l_i}\prod_{i=1}^n z_i^{k_i+1}\prod_{j=1}^m w_j^{l_j+1}\langle\vec{\mu}|\prod_{i=  }^n\tau_{k_i}(\boldsymbol{0})\prod_{j=1}^m\tau_{l_j}(\boldsymbol{\infty})\rangle^{\bullet,\mathbb{C}^*}
\end{equation}
where $\vec{\mu}=\{\mu_1,\cdots,\mu_\rho\}$ is an ordered partition of $d$ such that each $\mu_i>0$, $\boldsymbol{0}$ and $\boldsymbol{\infty}$ are two natural equivariant classes of $\PP^1$, and the superscripts $\bullet$, $\mathbb{C}^*$ indicate that we consider possibly disconnected equivariant relative invariants of $(\PP^1,0)$. Here as in \cite{OP3}, we omit the variable for genus in the definition of the generating series $G$ since it is redundant. We also need to add unstable contributions in \eqref{eqn:eqvcap} to achieve uniformity (see \cite[Section 2.3.4]{OP3}).

To compute \eqref{eqn:eqvcap}, we need to consider the large $r$ behavior of
\begin{equation}\label{eqn:expfm-I}
\left\langle \bA_{\vec{\mu},n}\left(\frac{1}{t^{d/r}(-t)^d}\right)P_d e^{\frac{-t\alpha_{-1}}{u}}\prod_{j=1}^{m}\bA_{0/1}^*(w_j)\right\rangle
\end{equation}
where 
\[\bA_{\vec{\mu},n}=\prod_{i=1}^{\rho}\bA_{\mu_i/r}(x_i)\prod_{i=1}^{n}\bA_{0/r}(z_i).\]
Note that we do not have any psi-class insertions along the relative markings. So we only need to know the coefficient of $x_i$:
\[\bA_{\mu_i/r}[0]= \left[\frac{t^{\frac{\mu_i}{r}}}{u}\frac{x_i}{tx_i+\mu_i} \mathcal{S}(rux_i)^{\frac{tx_i+\mu_i}{r}} \sum_{i=-\infty}^{\infty} \frac{(tx_i\mathcal{S}(rux_i))^i}{(1+\frac{tx_i+\mu_i}{r})_i} \mathcal{E}_{ir+\mu_i}(ux_i)\right]_{x_i}\]
where $[\cdot]_{x_i}$ means that we take the coefficient of $x_i$. It is easy to see from the expression that only those summands with non-positive index $i$ will contribute to $\bA_{\mu_i/r}[0]$. But if $i<0$, the dual operator $\mathcal{E}_{ir+\mu_i}^*$ has negative energy for $r>\mu_i$, then $\mathcal{E}_{ir+\mu_i}^*v_{\emptyset}=0$. So only the term 
\[\left[\frac{t^{\frac{\mu_i}{r}}}{u}\frac{x_i}{tx_i+\mu_i} \mathcal{S}(rux_i)^{\frac{tx_i+\mu_i}{r}}\mathcal{E}_{\mu_i}(ux_i)\right]_{x_i}=\frac{t^{\frac{\mu_i}{r}}}{u\mu_i}\alpha_{\mu_i}\]
will be needed in the large $r$ behavior of \eqref{eqn:expfm-I}. Recall that $\alpha_{\mu_i}=\mathcal{E}_{\mu_i}(0)$. So if we set 
\begin{eqnarray*}\label{eqn:l-ops}
L_1 & = & \prod_{i=1}^{\rho}\frac{t^{\frac{\mu_i}{r}}}{u\mu_i}\alpha_{\mu_i},\\
L_2 & = & \text{energy $d$ part of the operator }e^{\frac{-t\alpha_{-1}}{u}}\prod_{j=1}^{m}\bA_{0/1}^*(w_j),
\end{eqnarray*}
then \eqref{eqn:expfm-I} can be determined from
\begin{equation}\label{eqn:middleterm}
\left(\frac{1}{t^{1/r}(-t)}\right)^{d}\left\langle L_1 \prod_{i=1}^{n}\bA_{0/r}(z_i) L_2\right\rangle.
\end{equation}
We then apply Lemma \ref{lem:basic} to \eqref{eqn:middleterm} which gives us
\begin{equation}\label{eqn:3rdterm}
\left(\frac{1}{t^{1/r}(-t)}\right)^{d}\sum_{P}\left \langle L_1  \left(\prod_{i=1}^{l(P)}f_{\vec{a}_{P_i}}(z_{P_i},u,t,r)\mathcal{E}_0(u|z_{P_i}|)\right)  L_2\right\rangle
\end{equation}
for $r$ sufficiently large. 
Note that in this case, all the $\vec{a}_{P_i}$ are in the form of 
\[\underbrace{(0,0,\cdots,0)}_{l_i}\]
where $l_i$ denotes the number of elements in $P_i$. Since the operators 
$\mathcal{E}_0(u|z_{P_i}|)$ commute with each other, the ordering is not important in this case. Given such an $\vec{a}_{P_i}$, the following lemma shows that $f_{\vec{a}_{P_i}}$ can be determined from the Hodge integrals on the moduli space of curves: 
\begin{lem}\label{lem:f0-hodge}
Let $P_i=\{p_1,p_2,\cdots,p_{l_i}\}$ and $\vec{a}_{P_i}=(0,0,\cdots,0)$. We have
\[f_{\vec{a}_{P_i}}=\varsigma (ru|z_{P_i}|)\frac{r^{2l_i}}{t^{l_i}}\sum_{g\geq 0}\left(\frac{r^2u}{t}\right)^{2g-2}H_g^{\circ}\left(\frac{tz_{p_1}}{r},\dots,\frac{tz_{p_{l_i}}}{r}\right)\]
where
\[H_g^{\circ}(a_1,\cdots,a_{l_i})=\prod_{j}a_j\int_{[\bM_{g,l_i}]}\frac{1-\lambda_1+\lambda_2-\dots\pm\lambda_g}{\prod_j(1-a_j\psi_j)}\]
if $2g-2+l_i>0$. Otherwise, we set 
\begin{equation}\label{eqn:unstable}
H_0^{\circ}(z_1,z_2)=\frac{z_1z_2}{z_1+z_2},\quad H_0^{\circ}(z_1)=\frac{1}{z_1}.
\end{equation}
Note that $\varsigma(z)$ is given by \eqref{eqn:vsig}.
\end{lem}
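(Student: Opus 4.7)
The plan is to exploit Lemma \ref{lem:basic}'s assertion that $f_{\vec{a}_{P_i}}$ depends only on the ordered tuple $\vec{a}_{P_i}$ (and not on $L_1, L_2$), so we are free to evaluate the identity in any test configuration and read off $f$ by comparison. A convenient choice is $L_1 = 1$ and $L_2 = e^{t\alpha_r/(ur)}\bigl(q/(t^{1/r}(-t))\bigr)^H e^{-t\alpha_{-1}/u}$, the tail of Johnson's operator from Theorem \ref{thm:orbiP1} with $s=1$. Then the vacuum expectation
\[\mathcal{V}(z_{p_1},\ldots,z_{p_{l_i}}) \coloneqq \left\langle \prod_{j=1}^{l_i}\bA_{0/r}(z_{p_j})\cdot L_2\right\rangle\]
is identified with the generating series of disconnected equivariant orbifold GW-invariants of $\mathcal{C}_{r,1}$ with $l_i$ insertions of $\tau_0(\boldsymbol{0}_{0/r})$ tracked by $u$ and $q$.

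Passing to the connected generating series on the geometric side corresponds, on the operator side, to isolating the single-block contribution $P=\{P_i\}$ in the partition expansion of Lemma \ref{lem:basic}, which factors as $f_{\vec{a}_{P_i}}(z_{P_i},u,t,r) \cdot \langle \mathcal{E}_0(u|z_{P_i}|) L_2\rangle$. I would then compute the connected orbifold potential by virtual $\C^*$-localization on $\bM_{g,l_i}(\mathcal{C}_{r,1},d)$: because every marking lives in the untwisted sector $\boldsymbol{0}_{0/r}$, the $0$-vertex integral is an ordinary Hodge integral on $\bM_{g,l_i}$ with equivariant $\bar\psi$-weights, and all orbifold data gets pushed into the rubber over $\infty$ (which is not stacky since $s=1$). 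Summing over ramification profiles, which is precisely what the $e^{-t\alpha_{-1}/u}$ factor in $L_2$ encodes, recombines the vertex integrals into $\sum_g(r^2u/t)^{2g-2} H_g^\circ(tz_{p_1}/r,\ldots,tz_{p_{l_i}}/r)$ with the overall factor $r^{2l_i}/t^{l_i}$ coming from the equivariant weights of the tangent directions at $0$ and the automorphism factors of the orbifold.

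Meanwhile, the one-point factor $\langle \mathcal{E}_0(u|z_{P_i}|) L_2\rangle$ is computed in closed form directly from Definition \ref{def:E-op} and the explicit action of $\alpha_r, \alpha_{-1}$ on the vacuum; its denominator together with the tube prefactor on the geometric side combine to give exactly $\varsigma(ru|z_{P_i}|)$. Dividing the two connected quantities then yields the claimed closed form for $f_{\vec{a}_{P_i}}$. The main obstacles will be: first, careful accounting of equivariant orbifold weights so that the substitution $\bar\psi_j \leftrightarrow tz_{p_j}/r$ and the normalizing powers of $r$ and $t$ emerge correctly from the $\C^*$-linearization on $\mathcal{C}_{r,1}$; and second, the handling of the unstable ranges $(g,l_i)\in\{(0,1),(0,2)\}$, where no Hodge integral exists and the rational substitutes in \eqref{eqn:unstable} must instead be extracted by commuting the leading $\mathcal{E}_0$-piece out of the explicit formula for $\bA_{0/r}$ using the commutation relation \eqref{eq:commrel}.
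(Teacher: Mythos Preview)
Your approach would eventually work, but it takes a substantially longer route than the paper's and contains some vagueness that would have to be sharpened.

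The paper exploits the freedom in Lemma~\ref{lem:basic} in the simplest possible way: it sets $L_1=L_2=\mathrm{Id}$ (both have fixed energy~$0$). Then the connected vacuum expectation
\[
\langle f_{\vec a_{P_i}}\mathcal E_0(u|z_{P_i}|)\rangle
=\langle \boldsymbol A_{0/r}(z_{p_1})\cdots \boldsymbol A_{0/r}(z_{p_{l_i}})\rangle^{\circ}
\]
is compared term by term with the connected expectation of Okounkov--Pandharipande's operator $\mathcal A(a,b)$ from \cite{OP2}, under the substitution $a=tz_{p_j}/r$, $b=ruz_{p_j}$. The commutator $\varsigma$-factors match exactly (since $[\mathcal E_{ir}(uz),\mathcal E_{jr}(uw)]$ and $[\mathcal E_i(ruz),\mathcal E_j(ruw)]$ produce the same $\varsigma$), the prefactor discrepancies cancel because the total $r$-energy is zero, and the only residual difference is $\mathcal E_0(u|z_{P_i}|)$ versus $\mathcal E_0(ru|z_{P_i}|)$ at the end. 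Then \cite[Theorem~2]{OP2} is invoked as a black box for $\langle\prod_j\mathcal A\rangle^\circ$, and dividing by $\langle\mathcal E_0(ru|z_{P_i}|)\rangle=1/\varsigma(ru|z_{P_i}|)$ gives the result. No localization computation is redone.

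Your proposal instead keeps the full Johnson tail as $L_2$ and plans to recompute the connected orbifold potential by equivariant localization on $\bM_{g,l_i}(\mathcal C_{r,1},d)$. Two comments. First, your claim that ``connected geometric side $=$ single-block term in Lemma~\ref{lem:basic}'' is correct, but for a reason you do not state: since $\mathcal E_0(z)v_\emptyset=\tfrac{1}{\varsigma(z)}v_\emptyset$ and $\mathcal E_0^*=\mathcal E_0$, the factor $\langle\prod_i\mathcal E_0(u|z_{P_i}|)\,L_2\rangle$ automatically equals $\prod_i\tfrac{1}{\varsigma(u|z_{P_i}|)}\cdot\langle L_2\rangle$, and $\langle L_2\rangle=1$. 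So your $L_2$ is in fact irrelevant --- you could have taken $L_2=1$ from the start and saved yourself the trouble. Second, your assertion that $\langle\mathcal E_0(u|z_{P_i}|)L_2\rangle$ ``together with the tube prefactor'' produces $\varsigma(ru|z_{P_i}|)$ is not right as stated: the expectation gives $1/\varsigma(u|z_{P_i}|)$, with no~$r$. The missing factor of $r$ in $\varsigma(ru|z_{P_i}|)$ would have to emerge from the detailed localization bookkeeping you only sketch, which is exactly the hard computation that citing \cite{OP2} avoids.
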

\begin{proof}
By \eqref{eqn:unifunapp}, we know that
\[\langle f_{\vec{a}_{P_i}}\mathcal{E}_0(u|z_{P_i}|)\rangle=\langle \boldsymbol{A}_{0/r}(z_{p_1})\dots\boldsymbol{A}_{0/r}(z_{p_{l_i}})  \rangle^{\circ}\]
where $\langle\dots\rangle^{\circ}$ denotes the connected part of the vacuum expectation (see Section 3.3 in \cite{OP1}). The trick here is to compare $\langle \boldsymbol{A}_{0/r}(z_{p_1})\dots\boldsymbol{A}_{0/r}(z_{p_{l_i}})  \rangle^{\circ}$ with 
\[\left\langle\prod_{j=1}^{l_i}\mathcal{A}\left(\frac{tz_{p_j}}{r},ruz_{p_j}\right)\right\rangle^{\circ}\]
where
\[\mathcal{A}(a,b)=\mathcal{S}(b)^a\sum_{k\in\Z}\frac{(\varsigma(b))^k}{(a+1)_k}\mathcal{E}_k(b).\]
Recall that functions $\mathcal{S}(z)$ and $\varsigma(z)$ are given by \eqref{eqn:S-fun} and \eqref{eqn:vsig} respectively.
By comparing operators $\mathcal{A}$ and $\boldsymbol{A}_{0/r}$,
it is easy to see that
\[u^{-l_i}\left\langle\prod_{j=1}^{l_i}\mathcal{A}\left(\frac{tz_{p_j}}{r},ruz_{p_j}\right)\right\rangle^{\circ}=\langle f_{\vec{a}_{P_i}}\mathcal{E}_0(ru(\sum z_i))\rangle\]
The RHS is further equal to $\frac{1}{\varsigma (ru|z_{P_i}|)}\cdot f_{\vec{a}_{P_i}}$.
Thus it gives a way to compute $f_{\vec{\alpha}}$ once the LHS is known.
By \cite[Theorem 2]{OP2}, the LHS equals to 
\[\frac{r^{2l_i}}{t^{l_i}}\sum_{g\geq 0}\left(\frac{r^2u}{t}\right)^{2g-2}H_g^{\circ}\left(\frac{tz_{p_1}}{r},\dots,\frac{tz_{p_{l_i}}}{r}\right)\]
The lemma then follows.
\end{proof}

Using Lemma \ref{lem:f0-hodge}, we can then
extract the $r^0$-coefficient of \eqref{eqn:3rdterm} and get the operator formula for $G(\vec{\mu}|z_1,\cdots,z_n,w_1,\cdots,w_n)$. We need the following notation convention from \cite{OP3}. Let 
\[\begin{aligned}
T(x_1,\cdots,x_k)&=x_1\cdots x_k\left(\sum_{i=1}^k x_i\right)^{k-2},\\
\mathcal{E}_0(x_1,\cdots,x_k)&=T(x_1,\cdots,x_k)\mathcal{E}_0(x_1+\cdots+x_k).
\end{aligned}\]
We further set 
\[E(x_1,\cdots,x_n,s)=\sum_{\pi}s^{n-l(\pi)}\prod_{k=1}^{l(\pi)}\mathcal{E}_0(x_{\pi_k})\]
where $\pi=\pi_1\cup\pi_2\cdots\cup \pi_{l(\pi)}$ takes over all the partitions of $\{1,2,\cdots,n\}$. 

Let
\[\bA(z)=\mathcal{S}(z)^{tz}\sum_{k\in\Z}\frac{(\varsigma(z))^k}{(tz+1)_k}\mathcal{E}_k(z)\]
and $\bA^*(z)$ be the adjoint of $\bA(z)$ with $t$ replaced by $-t$. We have
\begin{thm}\label{thm:opfm_cap0}
\[G(\vec{\mu}|z_1,\cdots,z_n,w_1,\cdots,w_n)=\left\langle  \prod_{i=1}^{\rho}\frac{\alpha_{\mu_i}}{\mu_i}E(z_1,\cdots,z_n,t)e^{\alpha_{-1}}\prod_{j=1}^m \bA^*(w_j)\right\rangle.\]
\end{thm}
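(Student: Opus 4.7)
The plan is to complete the analysis initiated in Section~\ref{sec:opeqv}. From the reductions carried out there, $G(\vec\mu|z,w)$ equals the $r^0$-coefficient of
\[\frac{1}{t^{d/r}(-t)^d}\sum_P \Big\langle L_1 \prod_{i=1}^{l(P)} f_{\vec a_{P_i}}(z_{P_i},u,t,r)\,\mathcal E_0(u|z_{P_i}|)\,L_2 \Big\rangle\]
with $\vec a=(0,\ldots,0)$, $L_1=\prod_i \tfrac{t^{\mu_i/r}}{u\mu_i}\alpha_{\mu_i}$, and $L_2$ the energy-$d$ component of $e^{-t\alpha_{-1}/u}\prod_j \bA_{0/1}^*(w_j)$. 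The $t^{d/r}$ factor from $L_1$ (using $\sum_i\mu_i=d$) exactly cancels the $t^{-d/r}$ prefactor, so the residual $r$-dependence is concentrated in the $f_{\vec a_{P_i}}$ factors alone.

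Substituting Lemma~\ref{lem:f0-hodge} and expanding $H_g^\circ(tz_{P_i}/r)$ monomial by monomial, a power count using the dimension constraint $\sum_p k_p+j=3g-3+l_i$ shows that each monomial $\psi_1^{k_1}\cdots\psi_{l_i}^{k_{l_i}}\lambda_j$ contributes to $f_{\vec a_{P_i}}$ a term of $r$-degree $g-1+j$ from the prefactors $r^{2l_i}/t^{l_i}\cdot(r^2u/t)^{2g-2}$ combined with the substitution $a_p=tz_p/r$, plus further positive $r^{2k}$ corrections from the $\mathcal S$-expansion inside $\varsigma(ru|z_{P_i}|)$. Only the genus-$0$, no-$\lambda$ term at the leading order of $\mathcal S$ produces an $r^0$ contribution (after multiplication by the leading $r$ of $\varsigma$); all other terms lie in strictly positive $r$-powers. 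Evaluating $H_0^\circ(a)=\prod_p a_p\cdot(\sum_p a_p)^{l_i-3}$ for $l_i\geq 3$ (and the unstable conventions \eqref{eqn:unstable} for $l_i\leq 2$) gives the surviving factor $\tfrac{t^{l_i-1}}{u}\,T(z_{P_i})\,\mathcal E_0(u|z_{P_i}|)$.

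Summing over partitions $P$ and using $\sum_i(l_i-1)=n-l(P)$, these assemble into $E(z_1,\dots,z_n,t)$ up to a $u$-factor that tracks the genus grading. The prefactor $L_1$ limits to $\prod_i \alpha_{\mu_i}/\mu_i$ as $t^{\mu_i/r}\to 1$; the residual $(-t)^{-d}$ combines with the $(-t/u)^d\alpha_{-1}^d/d!$ slice of $e^{-t\alpha_{-1}/u}$ to reproduce the energy-$d$ component $\alpha_{-1}^d/d!$ of $e^{\alpha_{-1}}$, and the $\bA_{0/1}^*(w_j)$ operators are rewritten as $\bA^*(w_j)$ using the commutation relations \eqref{eq:commrel}, \eqref{eq:commrel2}. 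The main obstacle is this final bookkeeping between Johnson's $\bA^*_{0/1}$ and $e^{-t\alpha_{-1}/u}$ and the target operators $\bA^*$ and $e^{\alpha_{-1}}$; once handled, the $r^0$-extraction via the genus-$0$ Hodge integral is mechanical.
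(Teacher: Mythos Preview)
Your overall route matches the paper's: reduce to \eqref{eqn:3rdterm}, apply Lemma~\ref{lem:f0-hodge}, and extract the $r^0$-coefficient by showing that only the genus-zero summand survives. Your power count is in fact slightly sharper than the paper's (you track the $\lambda_j$-index and see the contribution is $g+j$ rather than just $\geq g$), and the evaluation $[f_{\vec a_{P_i}}]_{r^0}=\tfrac{t^{l_i-1}}{u}T(z_{P_i})$ and the assembly into $E(z_1,\dots,z_n,t)$ are correct.

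The gap is your final step, which you yourself flag as the main obstacle. You propose that $(-t)^{-d}$ combines with the single slice $(-t/u)^d\alpha_{-1}^d/d!$ of the exponential, and that $\bA_{0/1}^*(w_j)$ is then separately converted to $\bA^*(w_j)$ via the commutation relations \eqref{eq:commrel}, \eqref{eq:commrel2}. Neither part is right. The energy-$d$ component of $L_2$ is not concentrated in the $\alpha_{-1}^d$ term of the exponential: energy is shared between the exponential and the $\bA_{0/1}^*$ operators, since each $\mathcal E_{-i}$ term in $\bA_{0/1}^*(w_j)$ already has energy $i$. And no commutation is involved in passing from $\bA_{0/1}^*$ to $\bA^*$. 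After setting $u=1$, compare directly:
\[
\bA_{0/1}^*(w,1,t)=\mathcal S(w)^{-tw}\sum_i\frac{(-t)^i\,(w\mathcal S(w))^i}{(1-tw)_i}\,\mathcal E_{-i}(w),
\qquad
\bA^*(w)=\mathcal S(w)^{-tw}\sum_i\frac{(w\mathcal S(w))^i}{(1-tw)_i}\,\mathcal E_{-i}(w).
\]
Thus every unit of energy in $L_2$ --- whether it comes from an $\alpha_{-1}$ in $e^{-t\alpha_{-1}}$ or from an $\mathcal E_{-i}$ in $\bA_{0/1}^*$ --- carries exactly one factor of $(-t)$. The energy-$d$ part of $e^{-t\alpha_{-1}}\prod_j\bA_{0/1}^*(w_j)$ therefore equals $(-t)^d$ times the energy-$d$ part of $e^{\alpha_{-1}}\prod_j\bA^*(w_j)$, and the prefactor $(-t)^{-d}$ cancels it. This is what the paper means by ``the factor $(t^{1/r}(-t))^{-d}$ will cancel with the corresponding powers of $t$ in $L_1$ and $L_2$''. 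Replace your commutation-relations mechanism with this observation and the proof goes through.
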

\begin{proof}
We only need to take the $r^0$-coefficient of \eqref{eqn:3rdterm} and set $u=1$. First note that the factor $\left(\frac{1}{t^{1/r}(-t)}\right)^{d}$ will cancel with the corresponding powers of $t$ in $L_1$ and $L_2$. After cancelling with the $t$-powers and setting $u=1$, $L_1$ becomes $\prod_{i=1}^{\rho}\frac{\alpha_{\mu_i}}{\mu_i}$ and $L_2$ becomes\footnote{Here in \eqref{eqn:3rdterm}, $L_1  \left(\prod_{i=1}^{l(P)}f_{\vec{a}_{P_i}}(z_{P_i},u,t,r)\mathcal{E}_0(u|z_{P_i}|)\right)$ has energy $-d$, so we could directly take $L_2$ to be $e^{\frac{-t\alpha_{-1}}{u}}\prod_{j=1}^{m}\bA_{0/1}^*(w_j)$ in \eqref{eqn:3rdterm}.} $e^{\alpha_{-1}}\prod_{j=1}^m \bA^*(w_j)$. By comparing the operators $E(z_1,\cdots,z_n,t)$ and \[\sum_{P}\prod_{i=1}^{l(P)}f_{\vec{a}_{P_i}}(z_{P_i},u,t,r)\mathcal{E}_0(u|z_{P_i}|),\]
we only need to show that after setting $u=1$, the $r^0$-coefficient of $f_{\vec{a}_{P_i}}$ equals to \[t^{l_i-1}\left(\prod_{p_j\in P_i}z_{p_j}\right)(|z_{P_i}|)^{l_i-2}.\]
Note that here $\vec{a}_{P_i}$ has the form of $(0,\cdots,0)$. So by Lemma \ref{lem:f0-hodge}, we know that 
\[f_{\vec{a}_{P_i}}=\sum_{g\geq 0}\varsigma (ru|z_{P_i}|)\frac{r^{2l_i}}{t^{l_i}}\left(\frac{r^2u}{t}\right)^{2g-2}H_g^{\circ}\left(\frac{tz_{p_1}}{r},\dots,\frac{tz_{p_{l_i}}}{r}\right).\]
The key point is that only the genus zero summand will contribute 
to the $r^0$-coefficient of $f_{\vec{a}_{P_i}}$. The reason is that for each summand
\[\varsigma (ru|z_{P_i}|)\frac{r^{2l_i}}{t^{l_i}}\left(\frac{r^2u}{t}\right)^{2g-2}H_g^{\circ}\left(\frac{tz_{p_1}}{r},\dots,\frac{tz_{p_{l_i}}}{r}\right),\]
$\varsigma (ru|z_{P_i}|)\frac{r^{2l_i}}{t^{l_i}}\left(\frac{r^2u}{t}\right)^{2g-2}$ will contribute at least $1+2l_i+4g-4$ to the powers of $r$ and $H_g^{\circ}$ will contribute at least $-l_i-(3g-3+l_i)$ to the powers of $r$. So totally, it will contribute at least $g$ to the powers of $r$. Then for $g>0$, such a summand will not contribute to the $r^0$-coefficient. When $g=0$, it is easy to compute that the contribution to the $r^0$-coefficient is exactly
\[t^{l_i-1}\left(\prod_{p_j\in P_i}z_{p_j}\right)(|z_{P_i}|)^{l_i-2}.\]
Here we have used the well-known equality 
\[\int_{[\bM_{0,l_i}]}\frac{1}{\prod_j(1-a_j\psi_j)}=\left(\sum_{j}a_j\right)^{l_i-3}\]
and the unstable convention \eqref{eqn:unstable}.
\end{proof}

\begin{rmk}\label{rmk:OPfm}
We can similarly show that the generating series of equivariant relative invariants of $(\PP^1,\infty)$:
\[G(z_1,\cdots,z_n,w_1,\cdots,w_n|\vec{\nu})\coloneqq\sum_{k_i,l_i}\prod_{i=1}^n z_i^{k_i+1}\prod_{j=1}^m w_j^{l_j+1}\langle\prod\tau_{k_i}(\boldsymbol{0})\prod\tau_{l_j}(\boldsymbol{\infty})\left |\vec{\nu}\right.\rangle^{\bullet,\mathbb{C}^*}\]
has the following operator formula:
\[\left\langle \prod_{i} \bA(z_i)e^{\alpha_1}E(w_1,\cdots,w_m,-t)\prod_{j}\frac{\alpha_{v_j}}{v_j}  \right\rangle.\]
Up to an automorphism factor, it coincides with the operator formula in \cite[Proposition 3.2]{OP3}. The extra automorphism factor arises because in \cite{OP3}, relative markings are unordered.
\end{rmk}

\begin{rmk}\label{rmk:r-degbd}
We may further write the function $f_{\vec{a}_{P_i}}$ in Lemma \ref{lem:f0-hodge} as 
\[\varsigma (ru|z_{P_i}|)\frac{r^{2l_i}}{t^{l_i}}\sum_{g\geq 0}\left(\frac{r^2u}{t}\right)^{2g-2}\left(H_{g,1}^{\circ}\left(\frac{tz_{p_1}}{r},\dots,\frac{tz_{p_{l_i}}}{r}\right)+H_{g,2}^{\circ}\left(\frac{tz_{p_1}}{r},\dots,\frac{tz_{p_{l_i}}}{r}\right)\right)\]
where 
\[H_{g,1}^{\circ}(a_1,\cdots,a_{l_i})=\prod_{j}a_j\int_{[\bM_{g,l_i}]}\frac{(-1)^g\lambda_g}{\prod_j(1-a_j\psi_j)},\quad H_{g,2}^{\circ}=H_{g}^{\circ}-H_{g,1}^{\circ}\]
if $2g-2+l_i>0$. Otherwise we set 
\[H_{0,1}^{\circ}(z_1,z_2)=H_0^{\circ}(z_1,z_2),\, H_{0,2}^{\circ}(z_1,z_2)=0;\, H_{0,1}^{\circ}(z_1)=H_{0}(z_1),\,H_{0,2}^{\circ}(z_1)=0.\]
The integrals with insertion of $\lambda_g$ were explicitly calculated in \cite{FP, FP2}, we have
\[\varsigma (ru|z_{P_i}|)\frac{r^{2l_i}}{t^{l_i}}\sum_{g\geq 0}\left(\frac{r^2u}{t}\right)^{2g-2}H_{g,1}^{\circ}\left(\frac{tz_{p_1}}{r},\dots,\frac{tz_{p_{l_i}}}{r}\right)=\frac{t^{l_i-1}}{u}z_1\cdots z_{p_{l_i}}|z_{P_i}|^{l_i-2}.\]
So the function $f_{\vec{a}_{P_i}}$ in Lemma \ref{lem:f0-hodge} can be further decomposed as
\[\frac{t^{l_i-1}}{u}z_1\cdots z_{p_{l_i}}|z_{P_i}|^{l_i-2}+\varsigma (ru|z_{P_i}|)\frac{r^{2l_i}}{t^{l_i}}\sum_{g\geq 1}\left(\frac{r^2u}{t}\right)^{2g-2}H_{g,2}^{\circ}\left(\frac{tz_{p_1}}{r},\dots,\frac{tz_{p_{l_i}}}{r}\right).\]
Here the summation starts from $g=1$ because $H_{g=0,2}^{\circ}$ always equals to zero. The same argument as in the proof of Theorem \ref{thm:opfm_cap0} shows that the power of $r$ for each term in the expansion of
\[f_{\vec{a}_{P_i},2}\coloneqq\varsigma (ru|z_{P_i}|)\frac{r^{2l_i}}{t^{l_i}}\sum_{g\geq 1}\left(\frac{r^2u}{t}\right)^{2g-2}H_{g,2}^{\circ}\left(\frac{tz_{p_1}}{r},\dots,\frac{tz_{p_{l_i}}}{r}\right)\]
is positive. Note that for a fixed $g\geq 1$, the $r$-degree of $H_{g,2}^{\circ}$ ranges from $-(3g-3+2l_i)$ to $-(2g-2+2l_i)$, this will imply that
the power of $r$ is no bigger than that of $u$ for any term in the expansion of $f_{\vec{a}_{P_i},2}$. In
\[f_{\vec{a}_{P_i},1}\coloneqq \frac{t^{l_i-1}}{u}z_1\cdots z_{p_{l_i}}|z_{P_i}|^{l_i-2},\]
the power of $r$ is zero which is one bigger than that of $u$. 
So for each term with positive power of $r$ in the expansion of $$\prod_{i=1}^{l(P)}f_{\vec{a}_{P_i}}=\prod_{i=1}^{l(P)}(f_{\vec{a}_{P_i},1}+f_{\vec{a}_{P_i},2}),$$
the power of $r$ is at most $l(P)-1$ greater than that of $u$. But if we compute the connected part of the expectation:
\begin{equation}\label{eqn:connectedpart}
\left(\frac{1}{t^{1/r}(-t)}\right)^{d}\left \langle L_1  \left(\prod_{i=1}^{l(P)}\mathcal{E}_0(u|z_{P_i}|)\right)  L_2\right\rangle^{\circ}
\end{equation}
and compare with the number of functions of the form 
$$\varsigma(u(a_1z_1+\cdots+a_nz_n+b_1w_1+\cdots b_mw_m)),\quad a_1,\cdots,a_n,b_1,\cdots,b_m\in \Z,$$
created from the commutation relation \eqref{eq:commrel} and the powers of $u$ in the operators $L_1,L_2$, we conclude that the power of $u$ for each term in the expansion of \eqref{eqn:connectedpart} is at least $l(P)-2$ and no power of $r$ appears. 

So all in all, for each term with positive power of $r$ in the expansion of the connected part of the expectation \eqref{eqn:3rdterm},
the power of $r$ is at most one greater than that of $u$. Since the connected part of the expectation \eqref{eqn:3rdterm} computes the connected orbifold invariants of $\mathcal{C}_{r,1}$, this implies that the connected genus $g$ orbifold invariants of $C_{r,1}$, as a polynomial of $r$ for sufficiently large $r$, have $r$-degree bounded by $\max\{0,2g-1\}$ (because $u$ has power $2g-2$). The same $r$-degree bound was shown in \cite{TY2} for arbitrary root stack $X_{D,r}$. Our explicit calculation provides a new perspective on the $r$-degree bound in the case of $\mathcal{C}_{r,1}$.

\end{rmk}

\subsubsection{Tube case} In this case, we consider the generating series of equivariant relative invariants of $(\PP^1,0\cup\infty)$:
\[G(\vec{\mu}|z_1,\cdots,z_n,w_1,\cdots,w_n|\vec{\nu})\coloneqq\sum_{k_i,l_i}\prod_{i=1}^n z_i^{k_i+1}\prod_{j=1}^m w_j^{l_j+1}\langle\vec{\mu}|\prod\tau_{k_i}(\boldsymbol{0})\prod\tau_{l_j}(\boldsymbol{\infty})\left |\vec{\nu}\right.\rangle^{\bullet,\mathbb{C}^*}\]
In order to compute it, we need to take both $r$ and $s$ limits of the operator formula \eqref{eqn:orbiP1}. But the analysis is parallel to the cap case. So we omit the details and just write down the final result:
\begin{thm}\label{thm:eqtubefm}
\[G(\vec{\mu}|z_1,\cdots,z_n,w_1,\cdots,w_n|\vec{\nu})=\left\langle  \prod_{i}\frac{\alpha_{\mu_i}}{\mu_i}E(z_1,\cdots,z_n,t)E(w_1,\cdots,w_m,-t)\prod_{j}\frac{\alpha_{v_j}}{v_j}  \right\rangle.\]
\end{thm}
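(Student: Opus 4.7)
The plan is to mimic the cap-case argument of Theorem \ref{thm:opfm_cap0}, but apply the large-$r$/large-$s$ analysis symmetrically on both the $0$ and $\infty$ sides. Starting from Johnson's formula \eqref{eqn:orbiP1}, the tube generating series corresponds to the vacuum expectation
\[
\left\langle\prod_{i}\bA_{\mu_i/r}(x_i)\prod_{k}\bA_{0/r}(z_k)\,e^{\frac{t\alpha_r}{ur}}\left(\frac{q}{t^{1/r}(-t)^{1/s}}\right)^{H}e^{\frac{-t\alpha_{-s}}{us}}\prod_{l}\bA_{0/s}^*(w_l)\prod_{j}\bA_{\nu_j/s}^*(y_j)\right\rangle,
\]
and I would take the coefficient of $\prod x_i\prod y_j$ to mark the relative insertions with no psi-classes, then project onto $q^d$.

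First I would show, exactly as in Section \ref{section:equicap}, that the $e^{t\alpha_r/(ur)}$ and $e^{-t\alpha_{-s}/(us)}$ exponentials can be dropped once $r,s>d$, because any non-trivial power $\alpha_r^k$ or $\alpha_{-s}^k$ produces an operator whose adjoint has negative energy on the right-hand side (and which fails to be compensated after projecting onto the energy-$d$ subspace). Next, extracting the $x_i$- (resp.\ $y_j$-)coefficient of $\bA_{\mu_i/r}$ (resp.\ $\bA_{\nu_j/s}^*$) and keeping only the non-negative summands that survive on the vacuum, I would reduce the relative insertions to the factors
\[
L_1 \;=\; \prod_{i}\frac{t^{\mu_i/r}}{u\mu_i}\alpha_{\mu_i},\qquad L_3 \;=\; \prod_{j}\frac{(-t)^{\nu_j/s}}{u\nu_j}\alpha_{-\nu_j},
\]
sitting on the far left and far right of the expectation.

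The central step is to apply Lemma \ref{lem:basic} independently to the stationary blocks $\prod_k\bA_{0/r}(z_k)$ and $\prod_l\bA_{0/s}^*(w_l)$, expanding each as a sum over set partitions of products of $\mathcal{E}_0$-operators weighted by the structure functions $f_{\vec 0}$. Because $L_1$ and $L_3$ project onto fixed-energy subspaces sandwiching the middle, the $s$-side analysis is literally the adjoint of the $r$-side analysis with $t$ replaced by $-t$, so Lemma \ref{lem:f0-hodge} applies on each side. I would then take the $r^0 s^0$-coefficient: the prefactor $(t^{1/r}(-t)^{1/s})^{-d}$ kills the $t^{\mu_i/r}$ and $(-t)^{\nu_j/s}$ powers in $L_1, L_3$, and as in the cap proof only the genus-zero terms of each $H_g^\circ$ survive, producing exactly the coefficients $T(x_{\pi_k})$ in the definition of $E(\cdot,t)$ and $E(\cdot,-t)$ via $\int_{\bM_{0,l}}\prod(1-a_j\psi_j)^{-1}=(\sum a_j)^{l-3}$ plus the unstable convention \eqref{eqn:unstable}.

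The main obstacle I expect is not conceptual but bookkeeping: one must verify that the two applications of Lemma \ref{lem:basic} truly commute, i.e.\ that interactions between the $\bA_{0/r}$ block and the $\bA_{0/s}^*$ block (which are separated by the factor $(q/t^{1/r}(-t)^{1/s})^H$ and the subleading $\alpha$-exponentials) do not produce additional terms of order $r^0s^0$. This follows because, after the energy projection, the reduction on the $s$-side only sees the $\mathcal{E}_0$ operators produced on the $r$-side, which commute with the corresponding $\mathcal{E}_0^*$'s only up to commutators proportional to $\varsigma$, each of which carries an extra factor of $u$ that is absorbed into higher-genus Hodge contributions and therefore does not affect the $r^0 s^0$ coefficient. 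Once this independence is checked, assembling the surviving genus-zero contributions from both sides and setting $u=1$ produces precisely the claimed formula.
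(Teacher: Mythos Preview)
Your approach is correct and is precisely what the paper intends: it explicitly says the analysis ``is parallel to the cap case'' and omits the details, so the symmetric large-$r$/large-$s$ reduction you outline is the intended argument.

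One clarification on your ``main obstacle'': the two applications of Lemma~\ref{lem:basic} need not ``commute'' via the $\varsigma$-commutator mechanism you describe. They are simply applied \emph{sequentially}. First apply the lemma on the $r$-side, taking $L_2$ to be the entire $s$-block $P_d\prod_l\bA^*_{0/s}(w_l)L_3$; the hypothesis of the lemma only requires that $L_2$ have energy independent of $r$, which holds since the $s$-block carries $s$-energy but no $r$-energy. This replaces the $r$-block by a sum over partitions of products of $\mathcal{E}_0$'s, each of energy zero. Then apply the (adjoint) lemma on the $s$-side, with the already-reduced $r$-block now sitting inside $L_1$; its energy is $-d$, independent of $s$. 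No cross-terms ever arise, and the $\mathcal{E}_0$'s from the two sides simply appear side by side in the final expectation --- they are not required to commute, and indeed in the formula $E(z_1,\dots,z_n,t)E(w_1,\dots,w_m,-t)$ they do not.
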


\subsection{Operator formulae for relative invariants with negative contact}\label{sec:noneqopfm}
In this subsection, we consider relative invariants in the non-equivariant case. By analysing the large $r,s$ behavior of the operator formula \eqref{thm:orbiP1} in the non-equivariant specialization, we can determine the operator formulae for relative invariants of the cap and tube with negative contact. The key step will be the explicit calculation of a certain coefficient of the $r^0$-term of \eqref{eqn:f_cons} when all the elements of $\vec{a}$ are negative. As before, we start from the cap case.

\subsubsection{Cap case}
We are interested in computing the following relative invariant of $(\PP^1,0)$:
\begin{equation}\label{eqn:noneq-cap}
\left\langle \vec{\mu}_0\left|\prod_{i=1}^n\tau_{k_i}(\omega)\right.\right\rangle^{\bullet}
\end{equation}
where $\vec{\mu}_0=(a_1, \dots, a_{l_0}, b_1, \dots, b_{m_0})$ is an ordered partition of the degree $d$ such that $a_i>0$, $b_j<0$, and $\omega$ is the point class of $\PP^1$. It is equivalent to consider the following equivariant invariant:
\begin{equation}\label{eqn:eq-cap}
\left\langle \vec{\mu}_0\left|\prod_{i=1}^n\tau_{k_i}(\boldsymbol{0})\right.\right\rangle^{\bullet,\C^*}.
\end{equation}
So we need to analyse the large $r$ behavior of the following vacuum expectation in the non-equivariant specialization:
\begin{equation}\label{eqn:op-noneqcap}
r^{m_0}\left\langle \bA_{\vec{\mu}_0,n}\left(\frac{1}{t^{d/r}(-t)^d}\right)P_d e^{\frac{-t\alpha_{-1}}{u}}\right\rangle
\end{equation}
where 
\[\bA_{\vec{\mu}_0,n}= \prod_{i=1}^{l_0}\bA_{a_i/r}(x_i)\prod_{j=1}^{m_0}\bA_{(r+b_j)/r}(y_j)\prod_{k=1}^{n}\bA_{0/r}(z_k).\]
Since we have no psi-class insertions along the relative markings, a similar analysis as in Section \ref{section:equicap}
shows that we could replace the product $\prod_{i=1}^{l_0}\bA_{a_i/r}(x_i)$ by $\prod_{i=1}^{l_0}\frac{t^{a_i/r}}{ua_i}\alpha_{a_i}$. After plugging into \eqref{eqn:op-noneqcap} and cancelling certain powers of $t$, it becomes
\begin{equation}\label{eqn:midterm-noneqcap}
\frac{r^{m_0}}{t^{(\sum b_j)/r}u^{l_0+d}}\left\langle \prod_{i=1}^{l_0}\frac{\alpha_{a_i}}{a_i}\prod_{j=1}^{m_0}\bA_{(r+b_j)/r}(y_j)\prod_{k=1}^{n}\bA_{0/r}(z_k)\frac{(\alpha_{-1})^d}{d!} \right\rangle.
\end{equation}
We observe that the $t$-power in the numerator of each summand of
\begin{eqnarray*}
\bA_{(r+b_j)/r}(y_j) & = & \frac{t^{\frac{b_j}{r}}}{u}\frac{y_j}{ty_j+r+b_j} \mathcal{S}(ruy_j)^{\frac{ty_j+r+b_j}{r}} \sum_{i=-\infty}^{\infty} \frac{t^{i+1}(y_j\mathcal{S}(ruy_j))^i}{(1+\frac{ty_j+r+b_j}{r})_i} \mathcal{E}_{(i+1)r+b_j}(uy_j),\\
\bA_{0/r}(z_k)       & = & \frac{1}{u} \mathcal{S}(ruz_k)^{\frac{tz_k}{r}} \sum_{i=-\infty}^{\infty} \frac{t^i(z_k\mathcal{S}(ruz_k))^i}{(1+\frac{tz_k}{r})_i} \mathcal{E}_{ir}(uz_k),
\end{eqnarray*}
is the opposite to the $r$-energy of the corresponding operator,
and only $r$-energy zero operators in the expansion of the product 
$$\prod_{j=1}^{m_0}\bA_{(r+b_j)/r}(y_j)\prod_{k=1}^{n}\bA_{0/r}(z_k)$$
will contribute to the vacuum expectation  \eqref{eqn:midterm-noneqcap}. So we can omit those powers of $t$ and define
\begin{eqnarray*}
\bA_{(r+b_j)/r}'(y_j) & = & \frac{y_j}{ty_j+r+b_j} \mathcal{S}(ruy_j)^{\frac{ty_j+r+b_j}{r}} \sum_{i=-\infty}^{\infty} \frac{(y_j\mathcal{S}(ruy_j))^i}{(1+\frac{ty_j+r+b_j}{r})_i} \mathcal{E}_{(i+1)r+b_j}(uy_j)\\
\bA_{0/r}'(z_k)       & = & \mathcal{S}(ruz_k)^{\frac{tz_k}{r}} \sum_{i=-\infty}^{\infty} \frac{(z_k\mathcal{S}(ruz_k))^i}{(1+\frac{tz_k}{r})_i} \mathcal{E}_{ir}(uz_k).
\end{eqnarray*}
Now \eqref{eqn:midterm-noneqcap} is equal to
\begin{equation}\label{eqn:3rdterm-noneqcap}
\frac{r^{m_0}}{u^{l_0+d+m_0+n}}\left\langle \prod_{i=1}^{l_0}\frac{\alpha_{a_i}}{a_i}\prod_{j=1}^{m_0}\bA'_{(r+b_j)/r}(y_j)\prod_{k=1}^{n}\bA'_{0/r}(z_k)\frac{(\alpha_{-1})^d}{d!} \right\rangle.
\end{equation}
The good part of the above formula is that we can take the non-equivariant limit, i.e., set $t=0$. By the definition of the Pochhammer symbol \eqref{eqn:Psymb},
we know that only those summands in $\bA_{0/r}'(z_k)$  with index $i\geq 0$ will survive when we set $t=0$. So the $r$-energy of each operator in the expansion of $\prod_{k=1}^{n}\bA'_{0/r}(z_k,t=0)$ is always non-positive. Let $R$ be an operator in the expansion of $\prod_{k=1}^{n}\bA'_{0/r}(z_k,t=0)$ with negative $r$-energy. Then 
$$\left(R\frac{(\alpha_{-1})^d}{d!}\right)v_{\emptyset}=0$$
when $r$ becomes sufficiently large. So we only need to consider the zero $r$-energy operator in $\prod_{k=1}^{n}\bA'_{0/r}(z_k,t=0)$ which is just
\[\prod_{k=1}^{n}\mathcal{E}_0(uz_k).\]
So the non-equivariant limit of \eqref{eqn:3rdterm-noneqcap} is reduced to the following vacuum expectation:
\begin{equation}\label{eqn:4thterm-noneqcap}
\frac{r^{m_0}}{u^{l_0+d+m_0+n}}\left\langle \prod_{i=1}^{l_0}\frac{\alpha_{a_i}}{a_i}\prod_{j=1}^{m_0}\bA'_{(r+b_j)/r}(y_j,t=0)\prod_{k=1}^{n}\mathcal{E}_0(uz_k)\frac{(\alpha_{-1})^d}{d!} \right\rangle.
\end{equation}
Lemma \ref{lem:basic} still holds by replacing $\bA_{\vec{a}}$ with $\prod_{j=1}^{m_0}\bA'_{(r+b_j)/r}(y_j,t=0)$. So
\eqref{eqn:4thterm-noneqcap} can be written as
\begin{equation}\label{eqn:midfm-noneqcap}
\frac{r^{m_0}}{u^{l_0+d+m_0+n}}\sum_P\left\langle \prod_{i=1}^{l_0}\frac{\alpha_{a_i}}{a_i}\prod_{j=1}^{l(P)}f_{\vec{b}_{P_j}}\mathcal{E}_{|\vec{b}_{P_j}|}(u|y_{P_j}|)\prod_{k=1}^{n}\mathcal{E}_0(uz_k)\frac{(\alpha_{-1})^d}{d!} \right\rangle
\end{equation}
where $\vec{b}=(b_1,\cdots,b_{m_0})$, and $P=P_1\sqcup\cdots \sqcup P_{l(P)}$ runs over all the partitions of $\{1,2,\cdots,m_0\}$. Here we always assume that the smallest integer in $P_i$ is smaller than that of $P_j$, $\forall i<j$. 

Our aim is to compute \eqref{eqn:noneq-cap} which has no psi-class insertions along the relative markings. So we only need to determine the $(y_{P_j})!$-coefficient of $f_{\vec{b}_{P_j}}\mathcal{E}_{|\vec{b}_{P_j}|}(u|y_{P_j}|)$ for each $P_j$.
Here we have used the convention that 
\[(y_{P_j})!=\prod_{i\in P_j}y_i.\]

We have the following structural result for the $(y_{P_j})!$-coefficient:
\begin{lem}\label{lem:formalneg}
The  $(y_{P_j})!$-coefficient of  $f_{\vec{b}_{P_j}}\mathcal{E}_{|\vec{b}_{P_j}|}(u|y_{P_j}|)$ has the following form:
\[C_{P_j}(r)u^{l(P_j)}\mathcal{E}_{|\vec{b}_{P_j}|}[0]\]
where $C_{P_j}(r)$ is a rational function of $r$ which depends on $P_j$, $l(P_j)$ denotes number of elements in $P_j$, and $\mathcal{E}_{|\vec{b}_{P_j}|}[0]$ denotes the $z$-coefficient of $\mathcal{E}_{|\vec{b}_{P_j}|}(z)$.
\end{lem}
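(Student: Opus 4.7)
The plan is to analyze the scalar structure of $f_{\vec{b}_{P_j}}$ produced by the commutator expansion of Lemma~\ref{lem:basic} (as carried out in Appendix~\ref{sec:interaction}), and then combine a scaling argument in $u$ with a parity argument to extract the claimed form.

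First I would unpack the definition of $f_{\vec{b}_{P_j}}(y_{P_j},u,0,r)$ from Appendix~\ref{sec:interaction}: it is the scalar function multiplying $\mathcal{E}_{|\vec{b}_{P_j}|}(u|y_{P_j}|)$ in the connected block contribution of $\prod_{i\in P_j}\bA'_{(r+b_i)/r}(y_i,0)$, obtained by iteratively applying the commutation relation \eqref{eq:commrel} and selecting the $r$-energy zero piece of operator type $\mathcal{E}_{|\vec{b}_{P_j}|}$. The $r$-energy zero constraint forces the "bare" $y$-degree (from the prefactors $y_i/(r+b_i)\cdot (y_i\mathcal{S})^{k_i}$ with $\sum_{i\in P_j}(k_i+1)=0$) to cancel out across the block. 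All surviving $y$-dependence enters through the even factors $\mathcal{S}(ruy_i)^{\text{exponent}}$, the series $\mathcal{E}_{(k_i+1)r+b_i}(uy_i)$, and the $\varsigma$-factors produced by commutators, in each of which every $u$ is bundled with at least one $y$. Consequently every monomial of $f_{\vec{b}_{P_j}}\cdot\mathcal{E}_{|\vec{b}_{P_j}|}(u|y_{P_j}|)$ carries equal $u$- and $y$-degrees, so the coefficient of $\prod_{i\in P_j}y_i$, which has $y$-degree $l(P_j)$, automatically carries the factor $u^{l(P_j)}$.

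Next I would show that only the term containing $\mathcal{E}_{|\vec{b}_{P_j}|}[0]$ survives. Write
\[
\mathcal{E}_{|\vec{b}_{P_j}|}(u|y_{P_j}|)=\sum_{l\geq -1}(u|y_{P_j}|)^{l+1}\mathcal{E}_{|\vec{b}_{P_j}|}[l].
\]
Contributions from $l\geq 1$ would require $f_{\vec{b}_{P_j}}$ to supply a $y$-monomial of degree $\leq l(P_j)-2$, but the minimum $y$-degree of $f_{\vec{b}_{P_j}}$ is $l(P_j)-1$ (corresponding to the $l(P_j)-1$ linear $\varsigma$-factors needed to connect the $l(P_j)$ operators of the block). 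The $l=-1$ contribution is the $\prod y_i$-coefficient of $f_{\vec{b}_{P_j}}$ at $y$-degree $l(P_j)$; this vanishes by parity, since every $\varsigma$-factor is an odd function of its argument while every $\mathcal{S}$-type prefactor is even in each $y_i$, forcing every monomial of $f_{\vec{b}_{P_j}}$ to have total $y$-degree $\equiv l(P_j)-1\pmod{2}$. Hence only $l=0$ contributes, producing $C_{P_j}(r)u^{l(P_j)}\mathcal{E}_{|\vec{b}_{P_j}|}[0]$, and rationality of $C_{P_j}(r)$ in $r$ follows because all constituent factors (Pochhammer denominators, $\mathcal{S}$-expansions, and tree-sums of commutators) are rational in $r$ after extracting the leading $y$-monomial.

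The hard part will be handling the contributions from non-uniform choices of summand indices $(k_i)_{i\in P_j}$ with $\sum(k_i+1)=0$ and $k_i\neq -1$: these introduce $\varsigma$-factors whose arguments depend on $r$, together with scalar prefactors carrying negative powers of individual $y_i$, which only become polynomial after summing over all valid $(k_i)$-combinations. Verifying that the $u$-scaling and parity properties still survive across all such contributions---and in particular that the minimum $y$-degree of $f_{\vec{b}_{P_j}}$ is indeed $l(P_j)-1$ after all summations and cancellations---is the crux of the argument, and will be where the bookkeeping of Appendix~\ref{sec:interaction} is essential.
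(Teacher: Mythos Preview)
Your proposal is correct and follows essentially the same approach as the paper: both decompose via the connected interaction diagrams of Appendix~\ref{sec:interaction}, show that each $f_{\mathcal{J}}$ has minimum total $y$-degree $l(P_j)-1$ (from the $l(P_j)-1$ leading $\varsigma$-factors) with no degree-$l(P_j)$ monomial (your parity argument is exactly the paper's ``jumps at least $2$'' observation), and conclude that only the $u|y_{P_j}|\,\mathcal{E}_{|\vec{b}_{P_j}|}[0]$ term of the operator expansion can contribute. Your final-paragraph concern is unnecessary: the paper works term-by-term over each pair $(\vec m,\mathcal{J})$, so the degree and parity bounds hold for every summand $f_{\mathcal{J}}$ individually without any cancellation analysis, and finiteness of the sum (hence rationality of $C_{P_j}(r)$) follows because the $y_i$-degree in each $f_{\mathcal{J}}\mathcal{E}$ is bounded below by $m_i$, forcing $m_i\le 1$ for all $i$ (together with $\sum m_i=0$).
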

\begin{proof}
The proof relies on some facts from Appendix \ref{sec:interaction}.

Let $R_{m_i,b_i}(y_i)$ be the summand in $\bA_{(r+b_i)/r}'(y_i,t=0)$ with $r$-energy $-m_i$. More precisely, we have
\[R_{m_i,b_i}(y_i)=\frac{1}{r+b_i}  \frac{(\mathcal{S}(ruy_i))^{m_i+\frac{b_i}{r}}}{(1+\frac{r+b_i}{r})_{m_i-1}}y_i^{m_i} \mathcal{E}_{m_ir+b_i}(uy_i).\]
By Equation \eqref{eqn:unifunapp} and Definition \ref{def:G-fun}, we know that $f_{\vec{b}_{P_j}}\mathcal{E}_{|\vec{b}_{P_j}|}(u|y_{P_j}|)$ can be expressed as
\[\sum_{|\vec{m}_{P_j}|=0}\sum_{\mathcal{J}\in  \mathcal{K}(R_{\vec{m}_{P_j},\vec{b}_{P_j}})\atop \mathcal{J}\text{ is connected}}L(\mathcal{J})(R_{\vec{m}_{P_j},\vec{b}_{P_j}})\]
where $\vec{m}_{P_j}=(m_i)_{i\in P_j}$, $R_{\vec{m}_{P_j},\vec{b}_{P_j}}=(R_{m_i,b_i})_{i\in P_j}$, the interaction diagrams $\mathcal{J}$ indicate how the operators in $R_{\vec{m}_{P_j},\vec{b}_{P_j}}$ interact with each other which yields the operators $L(\mathcal{J})(R_{\vec{m}_{P_j},\vec{b}_{P_j}})$ (see Definition \ref{def:nestedLie}). So we only take the $(y_{P_j})!$-coefficient for each $L(\mathcal{J})(R_{\vec{m}_{P_j},\vec{b}_{P_j}})$ which can be written as
\begin{equation}\label{eqn:fmexmidterm}
f_{\mathcal{J}}(y_{P_j},u,r)\mathcal{E}_{|\vec{b}_{P_j}|}(u|y_{P_j}|)
\end{equation}
by the commutation relation \eqref{eq:commrel} and the fact that $\mathcal{J}$ is connected. 

We define the $y$-degree of a monomial in the variables $y_{P_j}$ to be the total sum of degrees of different $y_i\in y_{P_j}$. For example, $(y_{P_j})!$ has $y$-degree $l(P_j)$. We claim that each monomial in the expansion of $f_{\mathcal{J}}(y_{P_j},u,r)$ has $y$-degree at least $l(P_j)-1$ and there is no monomial with $y$-degree $l(P_j)$.

The reason is that we can express $f_{\mathcal{J}}$ as 
\[\left(\prod_{i\in P_j}\frac{1}{r+b_i}  \frac{(\mathcal{S}(ruy_i))^{m_i+\frac{b_i}{r}}}{(1+\frac{r+b_i}{r})_{m_i-1}}y_i^{m_i}\right)\varsigma_{\mathcal{J}}\]
where $\varsigma_{\mathcal{J}}$ is a product of $l(P_j)-1$ $\varsigma$ functions (see \eqref{eqn:vsig} for the definition of $\varsigma$ function) which arises from the commutation relation \eqref{eq:commrel}. Each $\varsigma$ function appearing in the product has the following form:
\[\varsigma \left(u\left (\sum_{i\in P_j} c_i(r)y_i\right)\right)\]
where $c_i(r)$ are certain linear functions of $r$. Since $\varsigma(z)$ expands as
\[z+\frac{z^3}{24}+\cdots,\]
we know that each $\varsigma$ function contributes at least $1$ to the $y$-degree. So $\varsigma_{\mathcal{J}}$ contributes at least $l(P_j)-1$ to the $y$-degree. As for the term
\[\prod_{i\in P_j}\frac{1}{r+b_i}  \frac{(\mathcal{S}(ruy_i))^{m_i+\frac{b_i}{r}}}{(1+\frac{r+b_i}{r})_{m_i-1}}y_i^{m_i},\]
since $|\vec{m}_{P_j}|=\sum_i m_i=0$, it always contribute at least $0$ to the $y$-degree. So totally, each monomial in $f_{\mathcal{J}}$ has $y$-degree at least $l(P_j)-1$. And since the $y$-degree coming from
\[\left(\prod_{i\in P_j}(\mathcal{S}(ruy_i))^{m_i+\frac{b_i}{r}}\right)\varsigma_{\mathcal{J}}\]
jumps at least $2$, we know that there is no monomial with $y$-degree $l(P_j)$.

Now in order to extract the $(y_{P_j})!$-coefficient of \eqref{eqn:fmexmidterm} whose $y$-degree is $l(P_j)$, we have to borrow one $y$-degree from $\mathcal{E}_{|\vec{b}_{P_j}|}(u|y_{P_j}|)$. It is now easy to see that $(y_{P_j})!$-coefficient of \eqref{eqn:fmexmidterm} has the following form:
\[C_{\mathcal{J}}(r)u^{l(P_i)}\mathcal{E}_{|\vec{b}_{P_j}|}[0]\]
where $C_{\mathcal{J}}(r)$ is a certain rational function on $r$.
So the $(y_{P_j})!$-coefficient of  $f_{\vec{b}_{P_j}}\mathcal{E}_{|\vec{b}_{P_j}|}(u|y_{P_j}|)$ has the following form
\[\left(\sum_{|\vec{m}_{P_j}|=0}\sum_{\mathcal{J}\in  \mathcal{K}(R_{\vec{m}_{P_j},\vec{b}_{P_j}})\atop \mathcal{J}\text{ is connected}}C_{\mathcal{J}}(r)\right)u^{l(P_i)}\mathcal{E}_{|\vec{b}_{P_j}|}[0].\]
We define
\begin{equation}\label{eqn:comfm}
C_{P_j}(r)=\left(\sum_{|\vec{m}_{P_j}|=0}\sum_{\mathcal{J}\in  \mathcal{K}(R_{\vec{m}_{P_j},\vec{b}_{P_j}})\atop \mathcal{J}\text{ is connected}}C_{\mathcal{J}}(r)\right).
\end{equation}
$C_{P_j}(r)$ is still a rational function because it is a finite sum.
\end{proof}

Now we are left to determine the rational function $C_{P_j}(r)$. A priori, we have a combinatorial formula \eqref{eqn:comfm} to compute it. But it is too complicated for a real computation. So we need to find a way around it. The new way relies on some knowledge of relative invariants which will be computed in Appendix \ref{sec:wdvvcal} using the WDVV equation. We need some preparation before we compute $C_{P_j}(r)$.

\begin{defn}\label{def:extrnota1}
  Let $P$ be a partition of the set $\{1,\cdots, n\}$. Let $P_1, \cdots, P_k$ be the parts of the partition $P$ ordered by the smallest element each part contains. We further require that elements in $P_j$ are ordered from small to big for $\forall j$. Let $\vec{b}=(b_1,\cdots,b_n)$ be an element of $\Z^n$.
  We define
  \begin{equation*}
      \vec{b}_P\coloneqq(\vec{b}_{P_1}, \cdots, \vec{b}_{P_k}).
  \end{equation*}
   So $\vec{b}_P$ is an element of $\mathbb{Z}^{ l(P_1)} \times \cdots \times \mathbb{Z}^{l(P_k)}$. 
\end{defn}
	
\begin{defn}\label{def:extranota2}
		Let $\vec{b}$ and $P$ be as in Definition \ref{def:extrnota1}. We define 
		\begin{equation*}
		\mathcal{E}_{\vec{b}_P}[0]\coloneqq \prod_{j=1}^k\mathcal{E}_{|\vec{b}_{P_j}|}[0].
		\end{equation*}
We further define 
	\begin{equation*}
	   N_{\vec{b}_P}\coloneqq \prod_{j=1}^k\left( (l(P_j)-1)! \prod_{i=2}^{l(P_j)} (\vec{b}_{P_j})_i\right)
	\end{equation*}
where $(\vec{b}_{P_j})_i$ denotes the $i$-th element of $\vec{b}_{P_j}$.
\end{defn}
\begin{defn}
Let $a\in\mathbb{Z}$. We define
\[
\{\alpha_a,\mathcal{E}_{\vec{b}_P}[0]\}\coloneqq\left[\cdots\left[ \left[\alpha_a, \mathcal{E}_{|\vec{b}_{P_1}|}[0] \right ],\mathcal{E}_{|\vec{b}_{P_2}|}[0]\right ],\cdots, \mathcal{E}_{|\vec{b}_{P_k}|}[0] \right ].
\]
\end{defn}
\begin{lem}
\label{lem:comb}
Let $a$ be a positive integer and $\vec{b}=(b_1,\cdots, b_n)$ be a vector of negative integers. We further assume that $a+b_1+\cdots+b_n>0$. Then we have the following equation:
\begin{equation}\label{eqn:keycomfm}
    \sum_{P} N_{\vec{b}_p} \{\alpha_a, \mathcal{E}_{\vec{b}_P}[0] \}=a (a+b_1+\cdots+b_n)^{n-1} \alpha_{d}
\end{equation}
where we sum over all partitions $P$ of $\{1, ..., n\}$.
\end{lem}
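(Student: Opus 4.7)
The plan is to reduce the statement to a polynomial identity and prove that identity by induction plus a short generating-function manipulation.

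First, I would iterate the commutation relation $[\alpha_j,\mathcal{E}_k[0]]=j\alpha_{j+k}$ (valid when $j+k\neq 0$) on the nested commutator. Setting $s_0=a$ and $s_j=a+|\vec{b}_{P_1}|+\cdots+|\vec{b}_{P_j}|$ for $1\le j\le k=l(P)$, repeated application yields
\[
\{\alpha_a,\mathcal{E}_{\vec{b}_P}[0]\}=\Bigl(\prod_{j=0}^{k-1}s_j\Bigr)\alpha_d,\qquad d=a+b_1+\cdots+b_n.
\]
(Intermediate $s_j$'s may vanish at special integer values of the $b_i$, but since both sides of the desired identity are polynomials in $a,b_1,\ldots,b_n$ multiplied by $\alpha_d$, it suffices to verify the claim generically.) The lemma is thus reduced to the polynomial identity
\[
F_n(a;b_1,\ldots,b_n):=\sum_{P} N_{\vec{b}_P}\prod_{j=0}^{k-1}s_j \;=\; a\cdot d^{n-1}.
\]

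I would prove this identity by induction on $n$, conditioning on the part $P_1$ containing $1$. Writing $P_1=\{1\}\cup S$ with $S\subset\{2,\ldots,n\}$, the factor $(l(P_1)-1)!\prod_{i\in S}b_i$ splits off from $N_{\vec{b}_P}$ and $s_0=a$ splits off from $\prod s_j$; the remaining factors reassemble into $F_{n-1-|S|}(a+b_1+\sigma_S;(b_j)_{j\notin P_1})$ with $\sigma_S=\sum_{i\in S}b_i$. Applying the induction hypothesis reduces the claim to
\[
\sum_{m=0}^{n-1}m!\sum_{\substack{S\subset\{2,\ldots,n\}\\|S|=m}}\Bigl(\prod_{i\in S}b_i\Bigr)(a+b_1+\sigma_S)\,d^{n-m-2}=d^{n-1}.
\]

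To verify this reduced identity, I would rewrite $a+b_1+\sigma_S=d-\sigma_{S^c}$ (with $S^c=\{2,\ldots,n\}\setminus S$) and split the left-hand side into two pieces. The first equals $\sum_m m!\,e_m\,d^{n-1-m}$, where $e_m:=e_m(b_2,\ldots,b_n)$ are the elementary symmetric polynomials. For the second, observe that
\[
\sum_{S\subset\{2,\ldots,n\}}\Bigl(\prod_{i\in S}b_i\Bigr)\sigma_{S^c}\,t^{|S|}=\sum_{j=2}^n b_j\prod_{i\neq j}(1+tb_i)=Q'(t),\quad Q(t)=\prod_{i=2}^n(1+tb_i),
\]
so $\sum_{|S|=m}(\prod_{i\in S}b_i)\sigma_{S^c}=[t^m]Q'(t)=(m+1)e_{m+1}$. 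Hence the second piece is $\sum_{m\ge 0}m!(m+1)e_{m+1}d^{n-m-2}=\sum_{m'\ge 1}m'!\,e_{m'}\,d^{n-1-m'}$, which telescopes against the first piece to leave only the $m=0$ term $e_0\,d^{n-1}=d^{n-1}$.

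The main obstacle I anticipate is bookkeeping in the inductive step: one must verify carefully that removing $P_1$ yields a partition of $\{1,\ldots,n\}\setminus P_1$ still ordered by minimum, and that $N_{\vec{b}_P}$ and $\prod s_j$ genuinely factorize across the split (using that for $j\ge 1$, $s_j$ depends only on $P_1,\ldots,P_j$, so the tail $s_1,\ldots,s_{k-1}$ is exactly the corresponding $s$-product for the smaller partition starting from $a'=a+b_1+\sigma_S$). Once this factorization is set up, the two-line generating-function identity $Q'(t)=\sum_j b_j\prod_{i\neq j}(1+tb_i)$ does all the remaining work.
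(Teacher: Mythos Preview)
Your argument is correct and begins exactly as the paper does: apply the commutation relation $[\alpha_j,\mathcal E_k[0]]=j\,\alpha_{j+k}$ iteratively to obtain $\{\alpha_a,\mathcal E_{\vec b_P}[0]\}=\bigl(\prod_{j=0}^{k-1}s_j\bigr)\alpha_d$, reducing the lemma to a polynomial identity. The paper's proof is a single sentence---``repeatedly apply the commutation relation to the LHS and then compare with the expansion of the RHS''---leaving that identity to the reader; you have supplied a complete proof of it via induction on $n$ and the generating-function trick $Q'(t)=\sum_j b_j\prod_{i\ne j}(1+tb_i)$, which is a clean way to carry out what the paper suppresses.

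One small remark: your caveat about intermediate $s_j$ possibly vanishing is unnecessary under the stated hypotheses. Since $s_j=d-\sum_{l>j}|\vec b_{P_l}|$ and every block sum $|\vec b_{P_l}|$ is strictly negative, we have $s_j>d>0$ for $0\le j<k$, so the charge-operator case of the commutation relation never arises. This does not affect the validity of your argument, only its framing.
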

\begin{proof}
We repeatedly apply the commutation relation \eqref{eq:commrel2} to the LHS and then compare with the expansion of the RHS.
\end{proof}

We are ready to show that
\begin{lem}\label{lem:detcoeff}
The rational function $C_{P_j}(r)$ in Lemma \ref{lem:formalneg} equals to
\[\frac{(l(P_j)-1)!\prod_{i=2}^{l(P_j)}(\vec{b}_{P_j})_i}{r^{l(P_j)}}.\]
\end{lem}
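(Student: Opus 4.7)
The strategy is to determine $C_{P_j}(r)$ by matching the operator formula \eqref{eqn:midfm-noneqcap} against specific relative invariants of the cap computed directly via the WDVV equation in Appendix \ref{sec:wdvvcal}, proceeding by induction on the block size $l(P_j)$.

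For the base case $l(P_j)=1$, I would verify the formula directly from the definition. When $P_j=\{i\}$, the constraint $|\vec{m}_{P_j}|=0$ forces $m_i=0$, so $f_{\{i\}}\mathcal{E}_{b_i}(uy_i)$ is exactly the $m_i=0$ summand of $\bA'_{(r+b_i)/r}(y_i,t=0)$. Evaluating this summand, simplifying the Pochhammer factor $(1+(r+b_i)/r)_{-1}=r/(r+b_i)$, and using that $\mathcal{S}(z)$ is even in $z$ (so it contributes nothing to the $y_i$-coefficient), one obtains that the $y_i$-coefficient equals $u\mathcal{E}_{b_i}[0]/r$. This gives $C_{\{i\}}(r)=1/r$, matching the claim $(1-1)!\cdot(\text{empty product})/r^1$.

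For the inductive step, assume the formula is known for all blocks of size less than $n$. Fix $a>0$ and $\vec{b}=(b_1,\ldots,b_n)$ with $b_i<0$ and $d=a+\sum b_i>0$, and consider the one-point relative invariant
\[
I(a,\vec{b}) \;=\; \bigl\langle a,\,b_1,\ldots,b_n \,\bigm|\,\bigr\rangle^{\bullet}.
\]
I would compute this in two ways. On the operator side, applying \eqref{eqn:midfm-noneqcap} with $l_0=1$, $n=0$ and no interior insertions, moving $\alpha_a$ through the $\mathcal{E}_{|\vec{b}_{P_j}|}[0]$ via \eqref{eq:commrel2}, and using that the terminal $\alpha_a$ annihilates $v_\emptyset$, one gets
\[
I(a,\vec{b}) \;=\; \left[\frac{r^n}{a\,d!}\sum_{P}\Bigl(\textstyle\prod_{j=1}^{l(P)} C_{P_j}(r)\Bigr)\,\bigl\langle \{\alpha_a,\mathcal{E}_{\vec{b}_P}[0]\}\,(\alpha_{-1})^d\bigr\rangle\right]_{r^0}.
\]
The only partition whose contribution involves an unknown is $P=\{\{1,\ldots,n\}\}$, since every other $P$ has all blocks of size $<n$ and so their $C_{P_j}(r)$ are known by the inductive hypothesis. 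On the other hand, Appendix \ref{sec:wdvvcal} evaluates $I(a,\vec{b})$ directly by WDVV recursion, yielding a closed form proportional to $d^{n-1}$.

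Matching the two expressions isolates the unknown $C_{\{1,\ldots,n\}}(r)$. The combinatorial identity \eqref{eqn:keycomfm} of Lemma \ref{lem:comb} is what guarantees consistency: substituting the conjectural values $C_{P_j}(r)=N_{\vec{b}_{P_j}}/r^{l(P_j)}$ converts the operator sum into $\frac{1}{a\,d!}\bigl\langle a\,d^{n-1}\alpha_d(\alpha_{-1})^d\bigr\rangle=d^{n-1}$, matching the WDVV output exactly. Combined with the a priori polynomiality statement from \cite{FWY2} — which constrains the $r$-growth of the full vacuum expectation, hence of $C_{\{1,\ldots,n\}}(r)$ — this forces $C_{\{1,\ldots,n\}}(r)=(n-1)!\,b_2\cdots b_n/r^n$, completing the induction once one verifies that the equation holds for arbitrary ordered $\vec{b}$. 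The main obstacle will be the WDVV computation of Appendix \ref{sec:wdvvcal}: one must carefully track the boundary strata produced by the recursion in the presence of negative contact orders and confirm that the resulting closed form indeed has the simple shape $\propto d^{n-1}$ predicted by \eqref{eqn:keycomfm}. Once that identity is in hand, the operator-side matching is essentially formal.
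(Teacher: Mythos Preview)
Your overall strategy---induction on block size, matching the operator expression against a directly computed relative invariant, and closing with Lemma~\ref{lem:comb}---is exactly the paper's, and your base case is correct. But there are two genuine gaps in the inductive step.

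First, the auxiliary invariant is the wrong one. You take the \emph{cap} invariant $\langle a,b_1,\ldots,b_n\mid\,\rangle^\bullet$ with no interior markings, apply~\eqref{eqn:midfm-noneqcap}, and then assert that Appendix~\ref{sec:wdvvcal} computes it. It does not: Appendix~\ref{sec:wdvvcal} computes the genus-zero \emph{tube} invariant $\langle(a,b_1,\ldots,b_m)\mid\emptyset\mid(d)\rangle_{g=0,d}$, using the WDVV equation of~\cite{FWY} for the pair $(\PP^1,0\cup\infty)$. The paper accordingly runs the operator side on the tube (both $r,s\to\infty$), with $\frac{\alpha_{-d}}{d}$ on the right in place of $\frac{(\alpha_{-1})^d}{d!}$. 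This is not cosmetic: with $\vec\mu_\infty=(d)$ the $u$-power in the operator expression works out to $u^{-2}$, i.e.\ genus zero, whereas your cap setup with no interior markings produces $u^{-1-d}$, which is not genus zero for $d>1$ (and is an odd power for even $d$, forcing the whole expression to vanish).

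Second---and this is why genus zero is essential---extracting only the $r^0$-coefficient and appealing to polynomiality cannot determine the rational function $C_{\{1,\ldots,n\}}(r)$. Polynomiality tells you $r^n C(r)$ is a polynomial; a single $r^0$ equation fixes only its constant term. The paper instead invokes Remark~\ref{rem:genus0rindep} (i.e.\ \cite[Theorem~3.2]{FWY}): in genus zero, $r^m$ times the orbifold invariant is \emph{independent of $r$}, so the operator expression~\eqref{eqn:1stway} equals the WDVV value~\eqref{eqn:2ndway} as an identity in $r$. Since the inductively known terms are already $r$-free constants, the remaining term $r^m C_{\{1,\ldots,m\}}(r)\cdot\langle\tfrac{\alpha_a}{a}\mathcal{E}_{\sum b_i}[0]\tfrac{\alpha_{-d}}{d}\rangle$ must itself be constant in $r$, and Lemma~\ref{lem:comb} then identifies that constant. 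Your ``polynomiality plus one $r^0$ match'' does not close this gap.
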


\begin{proof}
We will prove it via induction on the number of elements in $P_j$, i.e.,  $l(P_j)$. The case $l(P_j)=1$ is trivial. We assume that the lemma is true for $l(P_j)<m$. We will determine $C_{P_j}(r)$ when $l(P_j)=m$ by computing the following 
genus zero relative invariant in two different ways:
\begin{equation}\label{eqn:twowaysinv}
\left\langle\vec{\mu}_0|\emptyset|\vec{\mu}_{\infty}\right\rangle_{g=0,d}
\end{equation}
where $\vec{\mu}_0=(a,b_1,\cdots,b_m)$ such that $a>0$ and $b_i<0$ for $\forall i$, $\vec{\mu}_{\infty}=(d)$. 

The first way is using the operator formula \eqref{eqn:orbiP1}.
By \cite{FWY}, we know that genus zero relative invariant \eqref{eqn:twowaysinv} is equal to the corresponding orbifold invariants after multiplying $r^m$. The latter can be computed via taking the $x y_1\cdots y_m x't^0u^{-2}q^d$-coefficient of 
the following connected vacuum expectation when both $r$ and $s$ become sufficiently large:
\[r^m\left\langle \bA_{a/r}(x)\prod_{j=1}^m \bA_{(r+b_j)/r}(y_j) e^{\frac{t\alpha_{r}}{ru}}\left(\frac{q}{t^{1/r}(-t)^{1/s}}\right)^{H} e^{\frac{-t\alpha_{-s}}{su}}A^*_{d/r}(x')\right\rangle^{\circ}.\]
Using a similar large $r,s$ analysis as before, we know that the $x y_1\cdots y_m x't^0u^{-2}q^d$-coefficient can also be computed by
\[\frac{r^{m}}{u^{2+m}}\sum_{P'}\left\langle\frac{\alpha_a}{a}\left(\prod_{j=1}^{l(P')}C_{P'_j}(r)u^{l(P'_j)}\mathcal{E}_{|\vec{b}_{P'_j}|}[0]\right)\frac{\alpha_{-d}}{d}\right\rangle\]
where $P'$ runs over all the partitions of $\{1,\cdots,m\}$. Note that each vacuum expectation in above summation equals to its connected one.
By induction and using the notations in Definition \ref{def:extranota2}, we know that the above vacuum expectation can be further written as 
\begin{equation}\label{eqn:1stway}
\sum_{P':l(P')>1}\left\langle\frac{\alpha_a}{a}N_{\vec{b}_{P'}}\mathcal{E}_{\vec{b}_{P'}}[0]\frac{\alpha_{-d}}{d}\right\rangle+r^m\left\langle\frac{\alpha_a}{a}C_{P_j}(r)\mathcal{E}_{|\vec{b}_{P_j}|}[0]\frac{\alpha_{-d}}{d}\right\rangle
\end{equation}
where we use the notation $P_j$ appearing in the induction to denote the unique part $\{1,2,\cdots,m\}$ of the partition $P'$ s.t. $l(P')=1$. 

The second way to compute \eqref{eqn:twowaysinv} is using the WDVV equation. The result is that
\begin{equation}
\label{eqn:2ndway}
\left\langle\vec{\mu}_0|\emptyset|\vec{\mu}_{\infty}\right\rangle_{g=0,d}=(a+b_1+\cdots+b_m)^{m-1}
\end{equation}
whose computation details are given in Appendix \ref{sec:wdvvcal}. Now a direct comparison of \eqref{eqn:1stway} and \eqref{eqn:2ndway} plus Lemma \ref{lem:comb} imply that 
\[C_{P_j}(r)=\frac{(m-1)!\prod_{i=2}^m b_i}{r^m}=\frac{(l(P_j)-1)!\prod_{i=2}^{l(P_j)} (\vec{b}_{P_j})_i}{r^{l(P_j)}}.\]
So Lemma \ref{lem:detcoeff} also holds for $l(P_j)=m$. By induction, it holds for all $l(P_j)$.
\end{proof}

\begin{rmk}
At first glance, it may be strange that the formula for $C_{P_j}(r)$ is not symmetric among those $(\vec{b}_{P_j})_i$.
This asymmetry is due to the non-commutativity of the operators $\mathcal{E}_{P_j}[0]$. After summing over all different partitions, the total sum
\[\sum_P N_{\vec{b}_P}\mathcal{E}_{\vec{b}_P}[0]\]
will be symmetric. For example, when $\vec{b}$ has two elements, we have
\[b_2\mathcal{E}_{b_1+b_2}[0]+\mathcal{E}_{b_1}[0]\mathcal{E}_{b_2}[0]=b_1\mathcal{E}_{b_2+b_1}[0]+\mathcal{E}_{b_2}[0]\mathcal{E}_{b_1}[0].\]
The above equality follows from the commutation relation \eqref{eq:commrel}.
\end{rmk}

Combining \eqref{eqn:midfm-noneqcap}, Lemmas \ref{lem:formalneg} and \ref{lem:detcoeff}, it is easy to deduce the operator formula for relative invariants of the cap with negative contact:
\begin{thm}\label{thm:noneqcap}
Let $\vec{\mu}_0=(a_1, ..., a_{l_0}, b_1, ..., b_{m_0})$ be an ordered partition of a degree $d>0$ such that $a_i>0,b_j<0$, $\forall i,j$. Using the notations in Definition \ref{def:extranota2},  we have the following operator formula:
\begin{equation*}
   \left\langle\vec{\mu}_0\left|\prod_{i=1}^n \tau_{k_i}(\omega)\right.\right\rangle^{\bullet}= \frac{1}{(\prod_{i=1}^{l_0} a_i) d!} \left\langle \prod_{i=1}^{l_0} \alpha_{a_i} (\sum_{P} N_{\vec{b}_p} \mathcal{E}_{\vec{b}_P}[0] )\prod_{i=1}^n \mathcal{E}_0[k_i] (\alpha_{-1})^d\right\rangle,
\end{equation*}
where we sum over all partitions $P$ of $\{1, ..., m_0\}$ and $\vec{b}=( b_1, ..., b_{m_0})$.
\end{thm}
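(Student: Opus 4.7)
The plan is to follow the orbifold-limit definition of relative invariants with negative contact. By Definition \ref{def:relinvariant}, the relative invariant on the left-hand side is obtained from the orbifold invariants on $\mathcal{C}_{r,1}$ by multiplying by $r^{m_0}$ and extracting the $r^0$-coefficient for $r$ sufficiently large. Johnson's formula \eqref{eqn:orbiP1} (with $s=1$) expresses the full generating series of these orbifold invariants as a vacuum expectation. Tracking the variables keeping the relevant insertions (the $x_i$ of $\bA_{a_i/r}$, the $y_j$ of $\bA_{(r+b_j)/r}$, the $z_k$ of $\bA_{0/r}$ paired with $\bar\psi^{k_i}$, and the $q^d$ piece), the problem reduces to the asymptotic analysis of the vacuum expectation \eqref{eqn:op-noneqcap} carried out in Section \ref{sec:noneqopfm}.

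Next I would make the sequence of simplifications already set up in the excerpt. Since there are no psi insertions at positive-contact markings, only the $x_i$-coefficient of $\bA_{a_i/r}$ survives, and by the $r$-energy argument only the summand $\frac{t^{a_i/r}}{u a_i}\alpha_{a_i}$ contributes. After absorbing the $t$-prefactors from \eqref{eqn:op-noneqcap} we obtain \eqref{eqn:midterm-noneqcap}. Specializing to the non-equivariant limit $t=0$ kills all the summands in $\bA_{0/r}'(z_k,t=0)$ of strictly positive $r$-energy by the Pochhammer symbol \eqref{eqn:Psymb}, and any operator of strictly negative $r$-energy annihilates $(\alpha_{-1})^d v_\emptyset$ once $r$ is large. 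Thus each $\bA_{0/r}'(z_k,t=0)$ can be replaced by its zero $r$-energy piece $\mathcal{E}_0(uz_k)$, reducing the expectation to \eqref{eqn:4thterm-noneqcap}. The only pieces still carrying $r$-dependence are the negative-contact operators $\bA'_{(r+b_j)/r}(y_j,t=0)$.

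To handle these I would invoke Lemma \ref{lem:basic} applied to $\prod_j \bA'_{(r+b_j)/r}(y_j,t=0)$, which packages the interactions into a sum over partitions $P$ of $\{1,\dots,m_0\}$ of products $\prod_j f_{\vec{b}_{P_j}}\mathcal{E}_{|\vec{b}_{P_j}|}(u|y_{P_j}|)$. Because the original insertions are $\tau_0$ at the negative markings, only the $(y_{P_j})!$-coefficient matters, and Lemma \ref{lem:formalneg} guarantees that this coefficient has the structural form $C_{P_j}(r)u^{l(P_j)}\mathcal{E}_{|\vec{b}_{P_j}|}[0]$. The main obstacle is the explicit determination of $C_{P_j}(r)$: a direct combinatorial attack via the diagrammatic formula \eqref{eqn:comfm} is unpleasant. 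My plan, following Lemma \ref{lem:detcoeff}, is to proceed by induction on $l(P_j)$, computing the genus-zero relative invariant $\langle\vec{\mu}_0|\emptyset|\vec{\mu}_\infty\rangle_{0,d}$ with $\vec{\mu}_0=(a,b_1,\dots,b_m)$, $\vec{\mu}_\infty=(d)$ in two ways: on one side via the partial operator formula already established (using the inductive hypothesis for smaller parts), and on the other side via an independent WDVV computation giving $(a+\sum b_i)^{m-1}$. The algebraic commutator identity of Lemma \ref{lem:comb} then isolates $C_{P_j}(r)=(l(P_j)-1)!\prod_{i\ge 2}(\vec{b}_{P_j})_i / r^{l(P_j)}$, which is exactly cancelled by the prefactor $r^{m_0}$ when recombined over all parts of $P$.

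Finally, substituting this value of $C_{P_j}(r)$ back into \eqref{eqn:midfm-noneqcap} and extracting the $z_1^{k_1+1}\cdots z_n^{k_n+1}$-coefficient converts each $\mathcal{E}_0(uz_k)$ into $\mathcal{E}_0[k_k]$, the prefactor $\frac{r^{m_0}}{u^{l_0+d+m_0+n}}$ collapses against the $u$-powers after setting $u=1$ at the end, and the remaining constants assemble into the stated operator expectation with the factor $N_{\vec{b}_P}$ summed over partitions $P$. The $r^0$-extraction demanded by Definition \ref{def:relinvariant} is automatic because after these substitutions the expression is $r$-independent, which also recovers the $r$-independence claim mentioned in the remark after the statement.
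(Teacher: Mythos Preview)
Your proposal is correct and follows the paper's approach essentially line by line: the reductions from \eqref{eqn:op-noneqcap} through \eqref{eqn:midfm-noneqcap}, the application of Lemma~\ref{lem:basic}, the structural Lemma~\ref{lem:formalneg}, and the inductive determination of $C_{P_j}(r)$ via WDVV (Lemma~\ref{lem:detcoeff}) together with Lemma~\ref{lem:comb} are exactly the ingredients the paper assembles. The paper's own proof of the theorem is a one-line reference to these preparatory results, and you have reconstructed the logical chain accurately.
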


\begin{rmk}\label{rmk:indepofr}
Notice that by Lemmas \ref{lem:formalneg} and \ref{lem:detcoeff}, $r^{m_0}$ in 
\begin{equation*}
\frac{r^{m_0}}{u^{l_0+d+m_0+n}}\sum_P\left\langle \prod_{i=1}^{l_0}\frac{\alpha_{a_i}}{a_i}\prod_{j=1}^{l(P)}f_{\vec{b}_{P_j}}\mathcal{E}_{|\vec{b}_{P_j}|}(u|y_{P_j}|)\prod_{k=1}^{n}\mathcal{E}_0(uz_k)\frac{(\alpha_{-1})^d}{d!} \right\rangle
\end{equation*}
cancels with the corresponding powers of $r$ appearing in $f_{\vec{b}_{P_j}}$. So the calculation actually shows that 
the stationary relative invariants of $(\PP^1,0)$ are equal to (up to some powers of $r$) the corresponding stationary orbifold invariants in all genera. This recovers \cite[Theorem 1.9]{TY} in the special case when the target curve is $\PP^1$.
\end{rmk}

\subsubsection{Tube case}
By analysing both large $r$ and large $s$ behavior of the operator formula \eqref{thm:orbiP1} in the non-equivariant limit, we can similarly derive the following operator formula for the relative invariants of $(\PP^1,0\cup\infty)$ with negative contact orders:
\begin{thm}\label{thm:opform}
Let 
\[\vec{\mu}_0=(a_1, \cdots, a_{l_0}, b_1, \cdots, b_{m_0}),\quad \vec{\mu}_{\infty}=(a_1', \cdots, a'_{l_{\infty}}, b_1', \cdots, b'_{m_{\infty}})\]
be two ordered partitions of a degree $d>0$ such that $a_i,a'_i>0$, $b_j, b'_j<0$, $\forall i,j$. Using the notations in Definition \ref{def:extranota2},  we have the following operator formula:
\begin{gather*}
   \left\langle\vec{\mu}_0\left|\prod_{i=1}^n \tau_{k_i}(\omega)\right|\vec{\mu}_{\infty}\right\rangle^{\bullet}=\\ \frac{1}{\prod_{i=1}^{l_0} a_i \prod_{i=1}^{l_{\infty}} a_i'} \left\langle \prod_{i=1}^{l_0} \alpha_{a_i} (\sum_{P} N_{\vec{b}_p} \mathcal{E}_{\vec{b}_P}[0] ) \prod_{i=1}^n \mathcal{E}_0[k_i] (\sum_{P'} N_{\vec{b}'_{P'}} \mathcal{E}_{\vec{b}'_{P'}}^*[0]) \prod_{i=1}^{l_{\infty}} \alpha_{-a'_i}\right\rangle
\end{gather*}
where we sum over all partitions $P$ of $\{1, ..., m_0\}$ and all partitions $P'$ of $\{1, ..., m_{\infty}\}$, $\vec{b}=(b_1, ..., b_{m_0})$ and $\vec{b}'=(b_1', \cdots, b'_{m_{\infty}})$.
\end{thm}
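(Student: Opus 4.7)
The plan is to mimic the proof of Theorem \ref{thm:noneqcap} but now analyze both the large $r$ and large $s$ behavior of Johnson's formula \eqref{eqn:orbiP1} simultaneously in the non-equivariant specialization. By Definition \ref{def:relinvariant} applied to $(\PP^1, 0\cup \infty)$, the desired relative invariant is, up to a factor of $r^{m_0} s^{m_\infty}$, the $r^0 s^0$-coefficient of the corresponding orbifold GW-invariant of $\cC_{r,s}$, which we will read off as a coefficient in an appropriate vacuum expectation obtained from \eqref{eqn:orbiP1}.

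First I would substitute into \eqref{eqn:orbiP1} the insertions corresponding to $\vec{\mu}_0$, $\vec{\mu}_\infty$ and the stationary descendents, and reduce as in \eqref{eqn:op-noneqcap}--\eqref{eqn:midterm-noneqcap} to a vacuum expectation of the schematic form
\[
\frac{r^{m_0}s^{m_\infty}}{(\text{powers of }t, u)} \Bigl\langle \prod_i \bA_{a_i/r}(x_i)\prod_j \bA_{(r+b_j)/r}(y_j)\prod_k \bA_{0/r}(z_k)\,(\cdots)\prod_k \bA^*_{0/s}(z'_k)\prod_j \bA^*_{(s+b'_j)/s}(y'_j)\prod_i \bA^*_{a'_i/s}(x'_i)\Bigr\rangle,
\]
where $(\cdots)$ contains the middle factor $e^{t\alpha_r/(ur)}(q/t^{1/r}(-t)^{1/s})^H e^{-t\alpha_{-s}/(us)}$. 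Since there are no descendents along the relative markings, the analysis of Section \ref{section:equicap} applies to both the $x_i$-coefficient of $\bA_{a_i/r}(x_i)$ and the $x'_i$-coefficient of $\bA^*_{a'_i/s}(x'_i)$: positive-contact relative markings simply contribute $\frac{t^{a_i/r}}{u a_i}\alpha_{a_i}$ and $\frac{(-t)^{a'_i/s}}{u a'_i}\alpha_{-a'_i}$ respectively (all other summands are killed by energy reasons on $v_\emptyset$ for $r,s$ large). Similarly, the middle factor $e^{t\alpha_r/(ur)}e^{-t\alpha_{-s}/(us)}$ can be dropped for large $r,s$.

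Next I would take the non-equivariant limit $t\to 0$ exactly as in the cap case: in the expansions of $\bA_{0/r}(z_k)$ and $\bA^*_{0/s}(z'_k)$, only non-negative powers of $t$ survive, and then only the zero $r$-energy (resp.\ $s$-energy) pieces contribute on $v_\emptyset$ (resp.\ $v_\emptyset^*$) when $r,s$ are large. This replaces the $\bA_{0/r}(z_k)$ by $\mathcal{E}_0(uz_k)$ and $\bA^*_{0/s}(z'_k)$ by $\mathcal{E}_0(-uz'_k)$, from which the stationary descendents $\prod_i \mathcal{E}_0[k_i]$ are extracted by reading off the appropriate $z_k^{k_i+1}$-monomials. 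Lemma \ref{lem:basic} is then applied to $\prod_j \bA_{(r+b_j)/r}(y_j)\big|_{t=0}$ on the left and the analogous statement for $\prod_j \bA^*_{(s+b'_j)/s}(y'_j)\big|_{t=0}$ on the right; extracting the $(y_{P_j})!$ and $(y'_{P'_j})!$ coefficients and invoking Lemma \ref{lem:formalneg} together with Lemma \ref{lem:detcoeff} produces the operators $\sum_P N_{\vec b_P}\mathcal{E}_{\vec b_P}[0]$ and $\sum_{P'} N_{\vec b'_{P'}}\mathcal{E}^*_{\vec b'_{P'}}[0]$ with precisely the $r^{-m_0}$ and $s^{-m_\infty}$ powers needed to cancel the prefactors.

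The main technical obstacle is justifying that Lemmas \ref{lem:basic}, \ref{lem:formalneg}, and \ref{lem:detcoeff} apply symmetrically on the $\infty$ side: one must verify that the analogues for the starred operators $\bA^*_{(s+b'_j)/s}$ hold with the same rational coefficients in $s$ and the same combinatorial structure in $P'$. This is essentially a formal consequence of the involution $\bA^*_{a/r}(z) = \bA_{a/r}(z)^*|_{t\mapsto -t}$ together with the identity $\mathcal{E}_j(z)^* = \mathcal{E}_{-j}(z)$, which converts the left-side analysis into the right-side one after conjugation; the WDVV input from Appendix \ref{sec:wdvvcal} used to pin down the constants $C_{P_j}(r)$ in the cap proof already determines the analogous $s$-side constants via this duality, so no additional genus-zero computation is needed. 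Assembling all the ingredients and cancelling the explicit $t$, $u$, $r^{m_0}$, $s^{m_\infty}$ factors against those collected from the $\alpha_{a_i}$, $\alpha_{-a'_i}$, $\mathcal{E}_{\vec b_P}[0]$ and $\mathcal{E}^*_{\vec b'_{P'}}[0]$ yields the stated formula.
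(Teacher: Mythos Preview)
Your proposal is correct and follows exactly the approach the paper takes: the paper's own ``proof'' of Theorem~\ref{thm:opform} consists of a single sentence stating that one analyses both the large $r$ and large $s$ behavior of \eqref{eqn:orbiP1} in the non-equivariant limit, parallel to the cap case, and your outline spells this out in the expected way, invoking Lemmas~\ref{lem:basic}, \ref{lem:formalneg}, \ref{lem:detcoeff} on the $0$-side and their adjoint analogues on the $\infty$-side. One small remark: in the schematic vacuum expectation you need only $\prod_k \bA_{0/r}(z_k)$ for the stationary $\omega$-insertions (treated as $\boldsymbol{0}$-insertions, as in \eqref{eqn:eq-cap}); there is no separate $\bA^*_{0/s}(z'_k)$ factor, but this does not affect the argument.
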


\begin{rmk}
If we set $\vec{\mu}_{\infty}=(1,1,\cdots,1)$, then the operator formula for the tube will become the operator formula for the cap (up to the automorphism factor $d!$). This shows the compatibility of the two formulae.
\end{rmk}

\begin{rmk}
We notice that if there are no negative contact orders we recover (up to some automorphism factors) the operator formula in \cite{OP1}:
\begin{equation}
\label{eq:opold}
   \left\langle\vec{\mu}_0\left |\prod_{i=1}^n \tau_{k_i}(\omega)\right|\vec{\mu}_{\infty}\right\rangle^{\bullet}= \frac{1}{\prod_{i=1}^{l_0} a_i \prod_{i=1}^{l_{\infty}} a_i'} \left\langle \prod_{i=1}^{l_0} \alpha_{a_i}\prod_{i=1}^n \mathcal{E}_0[k_i]\prod_{i=1}^{l_{\infty}} \alpha_{-a'_i} \right\rangle.
\end{equation}
\end{rmk}

\section{Example}\label{sec:Ex}
In this section, we use the operator formula in Theorem~\ref{thm:opform} to derive an explicit formula for the generating function of stationary relative invariants of $\mathbb{P}^1$ with one inner marking. We write
\begin{equation}
    F_{\vec{\mu}_{0}, \vec{\mu}_{\infty}}^{\circ}(z_1, ..., z_n)= \sum_{k_1, ..., k_n\geq 0} \left\langle \vec{\mu}_{0} \left| \prod_{i=1}^n \tau_{k_i}(\omega)\right| \vec{\mu}_{\infty} \right \rangle^{\circ} \prod_{i=1}^n z_i^{k_i+1}
\end{equation}
for the generating function of connected relative invariants of $\mathbb{P}^1$. If there are no negative contact orders, then we have the following explicit formula for the generating function of stationary relative invariants of $\mathbb{P}^1$ with one inner marking by \cite{OP1}:
\begin{equation}
\label{eq:OPconnect}
    F_{\vec{\mu}_{0}, \vec{\mu}_{\infty}}^{\circ}(z)=\frac{\prod_{i=1}^{l_0} \varsigma(a_i z) \prod_{j=1}^{l_\infty} \varsigma(a'_j z) }{(\prod_{i=1}^{l_0} a_i \prod_{j=1}^{l_{\infty}} a'_j) \varsigma(z)}.
\end{equation}
Okounkov and Pandharipande prove this formula by computing the connected expectation of their operator formula Eq. \eqref{eq:opold}. We extend their formula to include negative contact.
\begin{thm}
\label{thm:explform}
We have the following formula:
\[
\begin{aligned}
    F_{\vec{\mu}_{0}, \vec{\mu}_{\infty}}^{\circ}(z)=
\frac{1}{\prod_{i=1}^{l_0} a_i \prod_{j=1}^{l_{\infty}} a'_j} \frac{1}{\varsigma(z)}\cdot\\
\left(\sum_{J_1, J_2\cdots, J_s\atop a_{l_i}+|b_{J_i}|>0}\prod_{i=1}^s a_{l_i}(a_{l_i}+|b_{J_i}|)^{l(J_i)-1}\prod_{t\neq l_1, ..., l_s} \varsigma(a_t z)\prod_{i=1}^s \varsigma((a_{l_i}+|b_{J_i}|)z)\right)\cdot\\
\left(\sum_{J'_1, J'_2\cdots, J'_{s'}\atop a'_{l'_i}+|b'_{J'_i}|>0}\prod_{i=1}^{s'} a'_{l'_i}(a'_{l'_i}+|b'_{J'_i}|)^{l(J'_i)-1}\prod_{t\neq l'_1, ..., l'_{s'}} \varsigma(a'_t z)\prod_{i=1}^{s'} \varsigma((a'_{l'_i}+|b'_{J'_i}|)z)\right)
\end{aligned}\]
where the first summation takes over all different partitions $J_1,\cdots,J_s$ of $\{1,2,\cdots,m_0\}$ and different $l_i\in \{1,2,\cdots,l_0\}$ such that $l_j<l_{j+1}$ (ordered) and $a_{l_i}+|b_{J_i}|>0$. The second summation takes over all the partitions $J'_1,\cdots,J'_{s'}$ of $\{1,2,\cdots, m_{\infty}\}$ and different $l'_i\in \{1,2,\cdots,l_\infty\}$ such that $l'_j<l'_{j+1}$ (ordered) and $a'_{l'_i}+|b_{J'_i}|>0$. Here  we recall that $|b_{J_i}|=\sum_{j\in J_i}b_{j}$ and $|b_{J'_i}|=\sum_{j\in J'_i}b'_{j}$.
\end{thm}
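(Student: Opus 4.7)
The plan is to start from the operator formula in Theorem~\ref{thm:opform}, multiply by $z^{k+1}$, sum over $k\geq 0$, and take the connected vacuum expectation. Writing $\widetilde{\mathcal{E}}_0(z):=\sum_{k\geq 0}\mathcal{E}_0[k]\,z^{k+1}$, I would obtain
\begin{equation*}
F^{\circ}_{\vec{\mu}_0,\vec{\mu}_\infty}(z)=\frac{1}{\prod_i a_i \prod_j a'_j}\left\langle \prod_{i=1}^{l_0}\alpha_{a_i}\Bigl(\sum_P N_{\vec{b}_P}\mathcal{E}_{\vec{b}_P}[0]\Bigr)\widetilde{\mathcal{E}}_0(z)\Bigl(\sum_{P'} N_{\vec{b}'_{P'}}\mathcal{E}^{*}_{\vec{b}'_{P'}}[0]\Bigr)\prod_{j=1}^{l_\infty}\alpha_{-a'_j}\right\rangle^{\circ}.
\end{equation*}
From here the aim is to eliminate the negative-contact blocks so that the remaining expectation is of the Okounkov--Pandharipande type covered by Eq.~\eqref{eq:OPconnect}.

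Next I would carry out the left-hand absorption. The key input is Lemma~\ref{lem:comb}: for a single positive $\alpha_a$ and a vector $\vec{b}$ of negative integers satisfying $a+|\vec{b}|>0$, the nested-commutator sum $\sum_{P}N_{\vec{b}_P}\{\alpha_a,\mathcal{E}_{\vec{b}_P}[0]\}$ collapses to $a(a+|\vec{b}|)^{n-1}\alpha_{a+|\vec{b}|}$. Iterating this identity along the product $\prod_i\alpha_{a_i}$ and commuting the resulting $\mathcal{E}_{\vec{b}_P}[0]$ factors to the right, modulo operators that vanish in the connected expectation, I would rewrite the left-hand block as a sum indexed by pairs $(\{l_1<\cdots<l_s\},\{J_1,\ldots,J_s\})$ where $\{J_i\}$ is a partition of $\{1,\ldots,m_0\}$, each $J_i$ is attached to $l_i\in\{1,\ldots,l_0\}$, and $a_{l_i}+|b_{J_i}|>0$; each such absorption choice produces
\begin{equation*}
\Bigl(\prod_{i=1}^s a_{l_i}\bigl(a_{l_i}+|b_{J_i}|\bigr)^{l(J_i)-1}\alpha_{a_{l_i}+|b_{J_i}|}\Bigr)\prod_{t\notin\{l_1,\ldots,l_s\}}\alpha_{a_t}.
\end{equation*}
The symmetric procedure applies on the right, using the adjoint form of Lemma~\ref{lem:comb} to absorb each $\mathcal{E}^{*}_{\vec{b}'_{P'_j}}[0]$ into some $\alpha_{-a'_{l'_i}}$.

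The main obstacle will be verifying that only fully-absorbed configurations contribute to the connected part. Any residual $\mathcal{E}_{|\vec{b}_{P_j}|}[0]$ with negative index that is not absorbed on its own side would have to link to $\widetilde{\mathcal{E}}_0(z)$ or cross through it to the opposite block via~\eqref{eq:commrel}. A careful bookkeeping using the vacuum-annihilation properties of $\alpha_j$ for $j>0$, the fact that unabsorbed commutators produce operators of non-zero energy that must recombine, and the factorisation of disconnected terms should show that all such ``cross-block'' contributions either vanish or fall into the disconnected part and thus cancel once we pass to $\langle\,\cdot\,\rangle^{\circ}$. This is where the analogue of the connectedness argument in \cite{OP1} needs to be adapted to the presence of the $N_{\vec{b}_P}\mathcal{E}_{\vec{b}_P}[0]$ operators.

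Once both blocks are absorbed, the expectation reduces to an OP-type connected expectation in the effective positive contacts $\tilde{a}_t\in\{a_t\}_{t\notin\{l_i\}}\cup\{a_{l_i}+|b_{J_i}|\}$ and $\tilde{a}'_t$, and~\eqref{eq:OPconnect} gives
\begin{equation*}
\Bigl\langle\prod\alpha_{\tilde{a}_i}\widetilde{\mathcal{E}}_0(z)\prod\alpha_{-\tilde{a}'_j}\Bigr\rangle^{\circ}=\frac{\prod\varsigma(\tilde{a}_i z)\prod\varsigma(\tilde{a}'_j z)}{\varsigma(z)}.
\end{equation*}
Multiplying this by the combinatorial weights $\prod_i a_{l_i}(a_{l_i}+|b_{J_i}|)^{l(J_i)-1}$ from the left absorption and the analogous ones on the right, restoring the overall prefactor $\tfrac{1}{\prod a_i\prod a'_j}$ from Theorem~\ref{thm:opform}, and summing independently over the left and right absorption choices reproduces exactly the product form claimed in Theorem~\ref{thm:explform}.
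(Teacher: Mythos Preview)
Your proposal is correct and follows essentially the same route as the paper: start from Theorem~\ref{thm:opform}, pass to the connected expectation, absorb the negative-contact operators into the $\alpha$'s via Lemma~\ref{lem:comb}, and finish with the Okounkov--Pandharipande one-point formula~\eqref{eq:OPconnect}.

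One small simplification: the ``main obstacle'' you flag is lighter than you suggest. In the operator formula the block $\sum_P N_{\vec{b}_P}\mathcal{E}_{\vec{b}_P}[0]$ already sits to the \emph{left} of $\mathcal{E}_0(z)$, and the paper commutes each $\mathcal{E}_{b}[0]$ (with $b<0$) further \emph{left} through the $\alpha_{a_i}$'s; symmetrically the starred block is commuted to the right. So no operator ever crosses $\mathcal{E}_0(z)$, and there are no ``cross-block'' interactions to rule out. What remains is only to discard, in the connected expectation, the cases $a+b=0$ (the commutator~\eqref{eq:commrel2} produces the charge operator $C$, which annihilates the zero-charge space) and $a+b<0$ (the resulting $\alpha_{a+b}$ can be pushed to the far left, where it either kills the vacuum or along the way produces a scalar via $[\alpha_{a+b},\alpha_{-(a+b)}]$, which is a disconnected contribution). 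With those two observations your bookkeeping collapses to exactly the sum over $(l_i,J_i)$ with $a_{l_i}+|b_{J_i}|>0$ that you wrote down.
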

\begin{proof}
We notice that Theorem~\ref{thm:opform} gives the operator formula for the generating function of disconnected relative invariants of $(\mathbb{P}^1,0\cup\infty)$:
\begin{gather*}
     F_{\vec{\mu}_{0}, \vec{\mu}_{\infty}}^{\bullet}(z)=\\ \frac{1}{\prod_{i=1}^k a_i \prod_{i=1}^l a_i'} \langle \prod_{i=1}^{k} \alpha_{a_i} (\sum_{P} N_{\vec{b}_p} \mathcal{E}_{\vec{b}_P}[0] ) \mathcal{E}_0(z) (\sum_{P'} N_{\vec{b'}_{P'}} \mathcal{E}_{\vec{b'}_{P'}}^*[0]) \prod_{i=1}^l \alpha_{-a'_i} \rangle.
\end{gather*}
To compute the explicit formula for the generating function of connected invariants, we compute the connected expectation of this operator formula. To do this we recall Eq. \eqref{eq:commrel2}: For $a$ and $b$ nonzero integers we have 
\begin{equation}\label{eq:commrelToo}
    [\alpha_{a}, \mathcal{E}_{b}[0]]=
    \begin{cases}
    a \alpha_{a+b}, \text{ if } a+b\neq0 \\
    a C, \text{ if } \ a+b=0\\
    \end{cases}.
\end{equation}
where $C$ is the charge operator. In our operator formula, we are going to commute the $\mathcal{E}_{b_i}[0]$ operators to the left for $i=1, ..., m_0$ resp. the $\mathcal{E}^*_{b'_j}[0]$ operators to the right for $j=1, ..., m'_0$. Equation \eqref{eq:commrelToo} determines the interactions of $\mathcal{E}_b[0]$ operators with $\alpha_a$ operators. We can ignore the cases where $a+b=0$ since the charge operator will kill the operator expectation. Similarly we can ignore the cases where $a+b<0$ since in this case we can move the $\alpha_{a+b}$ to the left. If we happen to commute with an operator $\alpha_{-(a+b)}$ we get a constant factor. These terms we can ignore because it corresponds to a disconnected commutation pattern. When $\alpha_{a+b}$ arrives at the left end it will kill the operator expectation. This means we only have to consider interactions where $a+b>0$. 
\begin{gather*}
   \langle \prod_{i=1}^{k} \alpha_{a_i} (\sum_{P} N_{\vec{b}_p} \mathcal{E}_{\vec{b}_P}[0] ) \mathcal{E}_0(z) ...\rangle^{\circ}=\\
   \sum_{J_1, J_2\cdots, J_s\atop a_{l_i}+|b_{J_i}|>0} \langle \prod_{i\neq l_1, ..., l_s}\alpha_{a_i}\prod_{k=1}^s\left (\sum_{P_k\preceq J_k} N_{\vec{b}_{P_k}}\{\alpha_{l_k}, \mathcal{E}_{\vec{b}_{P_k}}[0]\} \right )\mathcal{E}_0(z)...\rangle^{\circ}=\\
   \sum_{J_1, J_2\cdots, J_s\atop a_{l_i}+|b_{J_i}|>0} \prod_{i=1}^s a_{l_i}(a_{l_i}+|b_{J_i}|)^{|J_i|-1} \langle \prod_{i\neq l_1, ..., l_s}\alpha_{a_i}\prod_{k=1}^s \alpha_{a_{l_k}+|b_{J_k}|}\mathcal{E}_0(z)...\rangle^{\circ},
\end{gather*}
where we sum over all partitions $J_1, ..., J_s$ of $\{1, ..., m_0\}$ and $1\leq l_1 < l_2 < ... < l_s\leq l_0$ such that $a_{l_i}+|b_{J_i}|>0$ for $i=1, ..., s$. Note that we have used Lemma~\ref{lem:comb} in the second equality. We can do the same calculation on the RHS of $\mathcal{E}_0(z)$ in the operator expectation. The connected expectation of the remaining terms can be computed using Eq.\eqref{eq:OPconnect} and we get our claimed formula.
\end{proof}
\begin{rmk}
By \cite{FWY2}, we know that relative invariants with negative contact can also be computed using ordinary relative invariants and rubber invariants. More precisely we sum over certain bipartite graphs where the two sides correspond to the relative side and rubber side. For each bipartite graph, we get a fiber product of the corresponding virtual classes of relative and rubber moduli spaces. The resulting class for each of these bipartite graphs is then capped with an obstruction class. If we consider relative invariants appearing in Theorem \ref{thm:explform}, this obstruction class will force only certain types of bipartite graphs to appear.

\includegraphics[width=\textwidth]{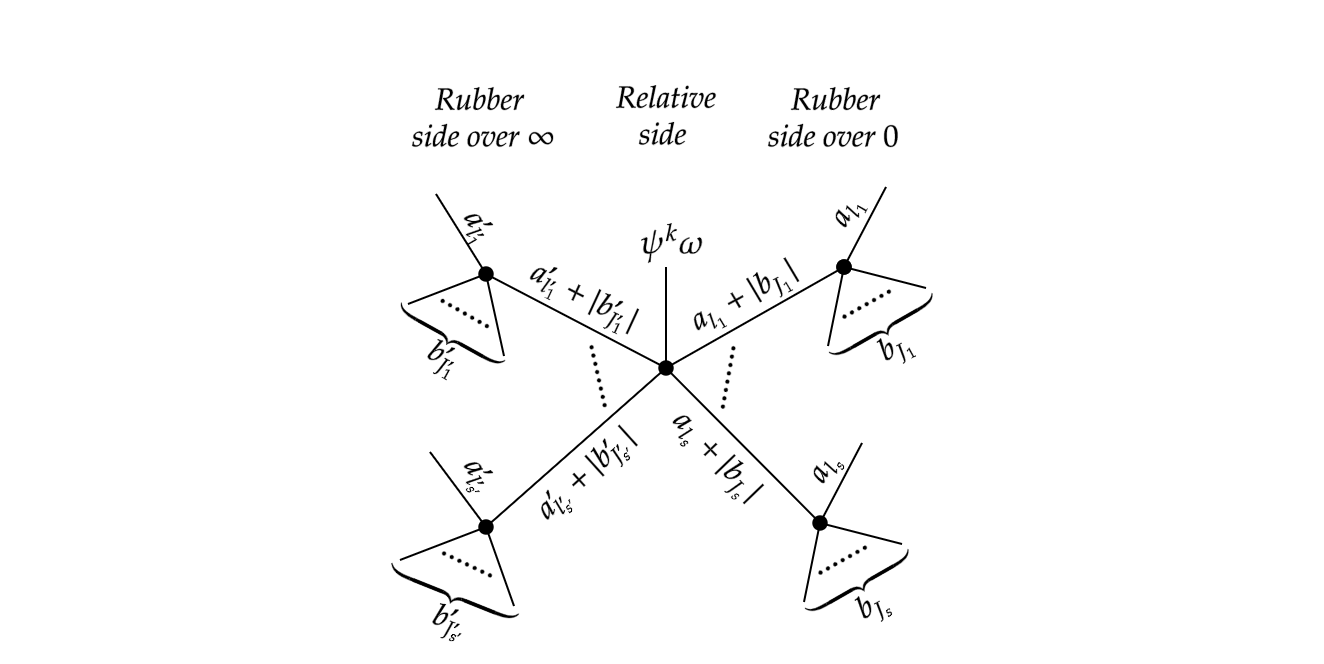}
Namely the bipartite graphs of the form shown in this picture. Note that all genus is concentrated in the unique vertex lying in the relative side. The $i$-th vertex on the rubber side over $0$ (resp. over $\infty$) will contribute a factor
$(a_{l_i}+|b_{J_i}|)^{l(J_i)}$ (resp. $(a'_{l'_i}+|b'_{J'_i}|)^{l(J'_i)}$). And the unique vertex on the relative side will contribute the factor
\[\frac{\prod_{t\neq l_1, ..., l_s} \varsigma(a_t z)\prod_{i=1}^s \varsigma((a_{l_i}+|b_{J_i}|)z)\prod_{t\neq l'_1, ..., l'_{s'}} \varsigma(a'_t z)\prod_{i=1}^{s'} \varsigma((a'_{l'_i}+|b'_{J'_i}|)z)}{\varsigma(z)\prod_{t\neq l_1, ..., l_s} a_t\prod_{i=1}^s(a_{l_i}+|b_{J_i}|)\prod_{t\neq l'_1, ..., l'_{s'}}a'_t\prod_{i=1}^{s'}(a'_{l'_i}+|b'_{J'_i}|)}\]
By summing over all different bipartite graphs, we derive the formula in Theorem \ref{thm:explform} again.
\end{rmk}

\appendix
\section{Lie brackets and interaction diagrams}\label{sec:interaction}
In this Appendix, we want to prove our key lemma in Section \ref{sec:opfm}, i.e., Lemma \ref{lem:basic}. The key lemma gives a uniform way to compute 
\begin{equation*}
\langle L_1 \prod_i \boldsymbol{A}_{a_i}(z_i,u,t)  L_2\rangle
\end{equation*}
when $r$ becomes very large. Recall that $L_1$ and $L_2$ are two operators with fixed energies. By Definition \ref{def:A-ops} and \eqref{eqn:A-new}, we know that each $\boldsymbol{A}_{a_i}$ can be uniquely decomposed as a summation of pure energy operators with energy in the form of $-m_i r-a_i$, $m_i\in \Z$. We use $R_{m_i,a_i}$ to denote the pure energy operator in the unique decomposition of $\boldsymbol{A}_{a_i}$ with energy $-m_i r-a_i$. So its $r$-energy is $-m_i$. For example, if $a_i>0$, then we have
\[R_{m_i,a_i}=\frac{t^{\frac{a_i}{r}}}{u}\frac{z_i}{tz_i+a_i} \mathcal{S}(ruz_i)^{\frac{tz_i+a_i}{r}} \frac{(tz_i\mathcal{S}(ruz_i))^{m_i}}{(1+\frac{tz_i+a_i}{r})_{m_i}} \mathcal{E}_{m_i r+a}(uz_i).\]
So we only need to compute the vacuum expectation of the operators in the following form:
\begin{equation}\label{eqn:vacexpapp}
\left\langle L_1\prod_{i} R_{m_i,a_i} L_2\right\rangle.
\end{equation}
Note that if the summation $\sum_{i} m_i\neq 0$, then the total $r$-energy of the operator $L_1\prod_{i} R_{m_i,a_i} L_2$ is nonzero because $L_1$ and $L_2$ have fixed energies (do not depend on $r$). Then the energy of $L_1\prod_{i} R_{m_i,a_i} L_2$ is nonzero when $r$ becomes sufficiently large. This implies that the expectation of $L_1\prod_{i} R_{m_i,a_i} L_2$ must be zero for large $r$. So we may assume that $\sum_i m_i=0$. We will give a recursive way to compute \eqref{eqn:vacexpapp}. To illustrate our general procedure, let us first see an example.

\begin{ex}\label{ex:app1}
We want to compute
\begin{equation}\label{eqn:exappex}
\left\langle L_1 R_{1,a_1}R_{0,a_2}R_{0,a_3}R_{-1,a_4} L_2\right\rangle
\end{equation}
when $r$ becomes sufficiently large.
Using the commutators, we can move the positive $r$-energy operator $R_{-1,a_4}$ to the left:
\begin{equation}\label{eqn:exmidterm1}
\begin{aligned}
\left\langle L_1 [R_{1,a_1},R_{-1,a_4}]R_{0,a_2}R_{0,a_3} L_2\right\rangle +&\left\langle L_1 R_{1,a_1}[R_{0,a_2},R_{-1,a_4}]R_{0,a_3} L_2\right\rangle+\\
\left\langle L_1 R_{1,a_1}R_{0,a_2}[R_{0,a_3},R_{-1,a_4}] L_2\right\rangle +&\left\langle L_1 R_{-1,a_4}R_{1,a_1}R_{0,a_2}R_{0,a_3} L_2\right\rangle.
\end{aligned}
\end{equation}
Let us first see the last term $\left\langle L_1 R_{-1,a_4}R_{1,a_1}R_{0,a_2}R_{0,a_3} L_2\right\rangle$. It equals to
\[(R_{1,a_1}R_{0,a_2}R_{0,a_3} L_2 v_{\emptyset}, R_{-1,a_4}^*L_1^*v_{\emptyset})\]
where $L_1^*$, $R_{-1,a_4}^*$ are the dual operators of $L_1$, $R_{-1,a_4}$ respectively. The $r$-energy of $R_{-1,a_4}^*$ is negative. So when $r$ becomes sufficiently large, the energy of $R_{-1,a_4}^*L_1^*$ is negative. Then $R_{-1,a_4}^*L_1^*v_{\emptyset}=0$ because $v_{\emptyset}$ has the lowest energy. 

The $r$-energy of each operator in the first term 
$$\left\langle L_1 [R_{1,a_1},R_{-1,a_4}]R_{0,a_2}R_{0,a_3} L_2\right\rangle$$
of \eqref{eqn:exmidterm1} is zero. We leave it unchanged. 

As for the second term of \eqref{eqn:exmidterm1}
\begin{equation}\label{eqn:exmidterm2}
\left\langle L_1 R_{1,a_1}[R_{0,a_2},R_{-1,a_4}]R_{0,a_3} L_2\right\rangle,
\end{equation}
the energy of the bracket $[R_{0,a_2},R_{-1,a_4}]$ is the summation of the energy of $R_{0,a_2}$ and $R_{-1,a_4}$. So the $r$-energy of the bracket is $1$. We can use the commutator to further move the bracket to the left. Then \eqref{eqn:exmidterm2} becomes
\[\left\langle L_1 \left[R_{1,a_1},[R_{0,a_2},R_{-1,a_4}]\right]R_{0,a_3} L_2\right\rangle+\left\langle L_1 [R_{0,a_2},R_{-1,a_4}]R_{1,a_1}R_{0,a_3} L_2\right\rangle.\]
Since the $r$-energy of the commutator $[R_{0,a_2},R_{-1,a_4}]$ is positive, using a similar analysis for the last term in \eqref{eqn:exmidterm1}, we know that the second term
\[\left\langle L_1 [R_{0,a_2},R_{-1,a_4}]R_{1,a_1}R_{0,a_3} L_2\right\rangle\]
vanishes when $r$ become sufficiently large.

As for the third term of \eqref{eqn:exmidterm1}
\begin{equation*}
\left\langle L_1 R_{1,a_1}R_{0,a_2}[R_{0,a_3},R_{-1,a_4}] L_2\right\rangle,
\end{equation*}
we always move the positive $r$-energy operator to the left using the commutators, which yields
\[\left\langle L_1 \left[R_{1,a_1},[R_{0,a_3},R_{-1,a_4}]\right]R_{0,a_2} L_2\right\rangle+\left\langle L_1 \left [R_{1,a_1},\left[R_{0,a_2},[R_{0,a_3},R_{-1,a_4}]\right]\right] L_2\right\rangle.\]
Finally, we may conclude that when $r$ becomes sufficiently large \eqref{eqn:exappex} equals to 
\[
\begin{aligned}
\left\langle L_1 [R_{1,a_1},R_{-1,a_4}]R_{0,a_2}R_{0,a_3} L_2\right\rangle &+\left\langle L_1 \left[R_{1,a_1},[R_{0,a_2},R_{-1,a_4}]\right]R_{0,a_3} L_2\right\rangle\\
\left\langle L_1 \left[R_{1,a_1},[R_{0,a_3},R_{-1,a_4}]\right]R_{0,a_2} L_2\right\rangle & +\left\langle L_1 \left [R_{1,a_1},\left[R_{0,a_2},[R_{0,a_3},R_{-1,a_4}]\right]\right] L_2\right\rangle.
\end{aligned}
\]
\end{ex}
Note that in the above example, we always move positive $r$-energy operators to the left until all the operators have zero $r$-energy. This leads us to the following definition:

\begin{defn}
Let $\mathcal{O}_1,\cdots,\mathcal{O}_n$ be operators on $\Lambda ^{\frac{\infty}{2}}_0 V$ 
with energies given by $k_1r+b_1,\cdots,k_nr+b_n$ respectively. Assume that $\sum_i k_i=0$. We define
$F_n(\mathcal{O}_1,\mathcal{O}_2,\cdots,\mathcal{O}_n)$ recursively: if $k_1>0$, then $F_n=0$; if $k_1=\cdots=k_n=0$, then $F_n=\prod_i \mathcal{O}_i$; Otherwise, let $i_0$ be the smallest integer with $k_{i_0}>0$, then
\[F_n(\mathcal{O}_1,\cdots,\mathcal{O}_n)=\sum_{j=1}^{i_0-1}F_{n-1}(\mathcal{O}_1,\cdots,\hat{\mathcal{O}}_j,[\mathcal{O}_j,\mathcal{O}_{i_0}],\cdots,\hat{\mathcal{O}}_{i_0},\cdots, \mathcal{O}_n)\]
where the hat indicates that we omit the corresponding term.
\end{defn}

Note that we could reinterpret the previous example as saying
\[
\left\langle L_1 R_{1,a_1}R_{0,a_2}R_{0,a_3}R_{-1,a_4} L_2\right\rangle=\left\langle L_1 F_4(R_{1,a_1},R_{0,a_2},R_{0,a_3},R_{-1,a_4}) L_2\right\rangle
\]
for sufficiently large $r$. And it is easy to show inductively that if there exists an integer $1\leq j_0\leq n$ such that $\sum_{i=1}^{j_0}k_i>0$, then $F_n(\mathcal{O}_1,\cdots,\mathcal{O}_n)=0$.

In order to keep track of the concatenated Lie bracket expressions appearing in
the recursive definition of $F_n$, we need to introduce the so-called interaction diagrams. The notation of interaction diagrams was first introduced in \cite{OP3} for similar reasons.

\begin{defn}
We call a directed graph $\mathcal{J}$ with vertex set $V\subset \mathbb{Z}_{>0}$ an \emph{interaction diagram} if it satisfies the following properties:\\
(i) $\lvert V \rvert< \infty$.\\
(ii) For every edge $(j,i)$\footnote{The pair $(j,i)$ stands for a directed edge which starts from the vertex $j$ and ends on $i$.} we have $j>i$ and for every vertex there is at most one outgoing edge.
\end{defn}
Note that from the condition (ii), we can easily deduce that $\mathcal{J}$ is a forest, i.e. each connected component is a tree.

Using the interaction diagrams, we can record the recursive process in the definition of $F_n$ as follows. Each summand in the final summation of $F_n$ corresponds to an interaction diagram. The vertex set is $\{1,2,\cdots,n\}$ which are in one-to-one correspondence with the $n$ inputs $\mathcal{O}_1,\cdots,\mathcal{O}_n$. The edges record how these operators interact with each other. More precisely, if the operator at vertex $a$ interacts with the operator at vertex $b$ with $a>b$, then we draw a directed edge from $a$ to $b$. 

For example, in the first step, we choose the smallest vertex $i_0$ such that the $r$-energy of $\mathcal{O}_{i_0}$ is positive, and then interact $\mathcal{O}_{i_0}$ with some operator $\mathcal{O}_j$ before it. Correspondingly, we draw a directed edge from $i_0$ to $j$. In the second step, we omit the vertex $i_0$ and replace the corresponding operator $\mathcal{O}_j$ by $[\mathcal{O}_j,\mathcal{O}_{i_0}]$ at vertex $j$. We then choose the smallest vertex $i_0'$ among the remaining $n-1$ vertices
whose corresponding operator has positive $r$-energy. The operator at vertex $i_0'$ will then interact with some operator at vertex $j'$ with $j'<i_0'$. We then draw a directed edge from $i_0'$ to $j'$ correspondingly. This algorithm gives us an interaction diagram for each summand in $F_n$.

\begin{ex}\label{ex:app2}
From the Example \ref{ex:app1}, we know that four summands are created in the algorithm of $F_4(R_{1,a_1},R_{0,a_2},R_{0,a_3},R_{-1,a_4})$:
\[
\begin{aligned}
(i)\qquad\;\; [R_{1,a_1},R_{-1,a_4}]R_{0,a_2}R_{0,a_3} , &\quad\quad  (ii)\quad\left[R_{1,a_1},[R_{0,a_2},R_{-1,a_4}]\right]R_{0,a_3}, \\
(iii)\quad \left[R_{1,a_1},[R_{0,a_3},R_{-1,a_4}]\right]R_{0,a_2}, &\quad\quad  (iv)\quad \left [R_{1,a_1},\left[R_{0,a_2},[R_{0,a_3},R_{-1,a_4}]\right]\right].
\end{aligned}
\]
Their corresponding interaction diagrams are given respectively by
\begin{equation*}
\xymatrix{
\\
\qquad \bullet_{1} & \bullet_{2} & \bullet_{3} & \bullet_{4}
\ar@/_2pc/[lll]\\
\bullet_{1} & \bullet_{2} \ar@/_2pc/[l] & \bullet_{3} & \bullet_{4} \ar@/_2pc/[ll] \\
\bullet_{1} & \bullet_{2} & \bullet_{3} \ar@/_2pc/[ll] & \bullet_{4} \ar@/_2pc/[l] \\
\bullet_{1} & \bullet_{2} \ar@/_2pc/[l] & \bullet_{3} \ar@/_2pc/[l] & \bullet_{4} \ar@/_2pc/[l] \\}
\end{equation*}
\end{ex}

Conversely, given an interaction diagram $\mathcal{J}$ with $n$ vertices, we could read off the corresponding summand in $F_n(\mathcal{O}_1,\cdots,\mathcal{O}_n)$ as follows.
\begin{enumerate}
    \item[(1)] We start with the product $\prod_{l=1}^n \mathcal{O}_{l}$. We search for the smallest vertex $a$ of $\mathcal{J}$ with exactly one outgoing edge and no incoming edges and look at its unique outgoing edge $(a,b)$. Then in the product we switch $\mathcal{O}_{a}$ to the right side of $\mathcal{O}_{b}$ and replace the $\mathcal{O}_{b} \mathcal{O}_{a}$ with $[\mathcal{O}_{b}, \mathcal{O}_{a}]$. So after the first step our product looks like 
\begin{equation*}
    \mathcal{O}_{1}\cdot ....\cdot \mathcal{O}_{b-1} [\mathcal{O}_{b},\mathcal{O}_{a}] \mathcal{O}_{{b+1}}\cdot ... \cdot \mathcal{O}_{{a-1}} \mathcal{O}_{{a+1}}\cdot ...\cdot \mathcal{O}_{n}.
\end{equation*}
    \item[(2)] We contract the edge $(a,b)$ along its direction. So the resulting graph does not have the vertex $a$ anymore. We think of the operator $[\mathcal{O}_{b}, \mathcal{O}_{a}]$ being assigned to the vertex $b$.  With this resulting graph and product we go back to step (1). The recursion stops when there are no edges anymore.
\end{enumerate}

\begin{defn}\label{def:nestedLie}
Given an interaction diagram $\mathcal{J}$, we use $L(\mathcal{J})$ to denote the operator created by the previously described algorithm.
\end{defn}
\begin{rmk}
The notation $L(\mathcal{J})$ does not include the important information of which operator corresponds to which vertex. We assume this is clear from the context.
\end{rmk}
In Example \ref{ex:app2}, we have seen that not every interaction diagram will appear in $F_n(\mathcal{O}_1,\cdots,\mathcal{O}_n)$.
\begin{defn}
We use $\mathcal{K}(\mathcal{O}_1,\cdots,\mathcal{O}_n)$ to denote all the interaction diagrams corresponding to the summands which are created by the algorithm of $F_n(\mathcal{O}_1,\cdots,\mathcal{O}_n)$.
\end{defn}
Clearly, we have 
\begin{equation}\label{eqn:graphsumapp}
F_n(\mathcal{O}_1,\cdots,\mathcal{O}_n)=\sum_{\mathcal{J}\in \mathcal{K}(\mathcal{O}_1,\cdots,\mathcal{O}_n)} L(\mathcal{J}).
\end{equation}

Next, we want to describe the set $\mathcal{K}(\mathcal{O}_1,\cdots,\mathcal{O}_n)$ more precisely.
\begin{defn}\label{def:valstepapp}
We call a step in the algorithm of $L(\mathcal{J})$ \emph{valid} if:
\begin{enumerate}
\item[(a)] the first factor in the product has non-positive $r$-energy;
\item[(b)] the factor that is chosen to be switched to the front is the first factor in the product with positive $r$-energy.
\end{enumerate}
\end{defn}
\begin{rmk}\label{rmk:valstepapp}
Actually, for each step in the algorithm of $L(\mathcal{J})$, the factor that is chosen to be switched to the front is never the first factor. So actually condition (b) implies condition (a). Here we still include the condition (a) so as to match the algorithm of $F_n(\mathcal{O}_1,\cdots,\mathcal{O}_n)$.
\end{rmk}

\begin{defn}
We call $\mathcal{J}$ \emph{valid} for $(\mathcal{O}_1, ..., \mathcal{O}_n)$ if each step in the algorithm of $L(\mathcal{J})$ is valid and at the end of the algorithm each vertex corresponds to an operator with $r$-energy $0$.
\end{defn}

It is easy to see from the the algorithm of $F_n(\mathcal{O}_1,\cdots,\mathcal{O}_n)$ that
\begin{lem}\label{lem:valapp}
\[\mathcal{J}\in \mathcal{K}(\mathcal{O}_1,\cdots,\mathcal{O}_n)\iff \mathcal{J}\text{ is valid for }(\mathcal{O}_1, ..., \mathcal{O}_n).\]
\end{lem}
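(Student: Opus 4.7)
The plan is to prove both implications by induction on $n$, the number of vertices (equivalently the depth of the recursion in $F_n$). The base case $n=1$ is immediate: the unique edgeless diagram on one vertex lies in $\mathcal{K}(\mathcal{O}_1)$ exactly when $\mathcal{O}_1$ has $r$-energy zero (so that the $F_n$ recursion falls into the terminating branch), which is exactly the validity condition.

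For the inductive step, I would compare the two procedures one layer at a time. The outermost recursive call of $F_n(\mathcal{O}_1,\cdots,\mathcal{O}_n)$ first checks that $k_1\leq 0$ (else the output is $0$), then locates the smallest index $i_0$ with $k_{i_0}>0$, and for each $j\in\{1,\dots,i_0-1\}$ it makes a recursive call on the tuple obtained by replacing $\mathcal{O}_j$ with $[\mathcal{O}_j,\mathcal{O}_{i_0}]$ and deleting $\mathcal{O}_{i_0}$. Each such call contributes an edge $(i_0,j)$ to the interaction diagrams it produces. On the $L(\mathcal{J})$ side, the very first edge to be contracted is $(a,b)$ with $a$ the smallest vertex having exactly one outgoing edge and no incoming edges; conditions (a) and (b) of Definition \ref{def:valstepapp} demand that the operator at vertex $a$ is the first factor in the current product with positive $r$-energy and that all earlier factors have non-positive $r$-energy. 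Thus validity forces $(a,b)=(i_0,j)$.

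For the forward direction, if $\mathcal{J}\in\mathcal{K}(\mathcal{O}_1,\cdots,\mathcal{O}_n)$ then, by the construction of $\mathcal{K}$, the first edge recorded is some $(i_0,j)$ with $i_0$ satisfying the above minimality property, so the first step of the $L(\mathcal{J})$ algorithm is valid; the rest of $\mathcal{J}$ lies in $\mathcal{K}(\cdots,[\mathcal{O}_j,\mathcal{O}_{i_0}],\cdots)$ by the definition of the recursion, and the inductive hypothesis finishes the argument (including the termination clause that all surviving vertices end with $r$-energy zero, which reflects the stopping condition $k_1=\cdots=k_n=0$ of $F_n$). Conversely, if $\mathcal{J}$ is valid, the first contraction transforms it into a diagram valid for the modified tuple appearing in the first-level sum of $F_n$; applying the inductive hypothesis places the reduced diagram in the corresponding $\mathcal{K}$-set, and reinserting the edge $(i_0,j)$ shows $\mathcal{J}\in\mathcal{K}(\mathcal{O}_1,\cdots,\mathcal{O}_n)$.

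The main obstacle is purely notational bookkeeping: one must track how $r$-energies combine under the commutator $[\mathcal{O}_j,\mathcal{O}_{i_0}]$ (it acquires $r$-energy $k_j+k_{i_0}$, possibly positive, negative, or zero) and how the vertex labels of $\mathcal{J}$ correspond to the updated operators after each contraction. Once this correspondence is set up carefully, and noting that Remark \ref{rmk:valstepapp} makes condition (a) of validity redundant once condition (b) is assumed, the equivalence reduces to the observation that both algorithms describe the same rooted labeled recursion tree.
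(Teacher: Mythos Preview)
Your induction on $n$ is correct and is precisely how one would make rigorous what the paper dismisses in a single line (``It is easy to see from the algorithm of $F_n(\mathcal{O}_1,\cdots,\mathcal{O}_n)$ that\ldots''); the paper offers no further argument. The one point in your forward direction that deserves an extra sentence rather than being filed under ``bookkeeping'' is why the vertex $a$ selected by the $L(\mathcal{J})$ algorithm (the smallest vertex with one outgoing edge and no incoming edges) coincides with the $i_0$ coming from $F_n$: any vertex $v<i_0$ starts with non-positive $r$-energy, so in the $F_n$ recursion it can only become the source of an outgoing edge after first receiving incoming edges to push its $r$-energy positive; hence no $v<i_0$ is incoming-edge-free and $a=i_0$ as needed.
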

\begin{defn}\label{def:G-fun}
 We define 
   \begin{equation*}
   G_n(\mathcal{O}_1, ..., \mathcal{O}_n)\coloneqq \sum_{\substack{
   \mathcal{J}\in \mathcal{K}(\mathcal{O}_1, ..., \mathcal{O}_n)\\
   \mathcal{J} \, \mathrm{ is\, connected}
   }} L(\mathcal{J}).
   \end{equation*}
\end{defn}

\begin{ex}
From the Example \ref{ex:app2}, we can easily see that
\[G_4(R_{1,a_1},R_{0,a_2},R_{0,a_3},R_{-1,a_4})=\left [R_{1,a_1},\left[R_{0,a_2},[R_{0,a_3},R_{-1,a_4}]\right]\right].\]
\end{ex}

\begin{rmk}\label{rmk:Gfunapp}
Each operator $R_{m_i,a_i}$ has the form $c_{m_i,a_i}\mathcal{E}_{m_ir+a_i}(uz_i)$, where $c_{m_i,a_i}$ is a certain function. Then by the commutation relation \eqref{eq:commrel}, we know that $G_s(R_{m_1,a_1},\cdots,R_{m_s,a_s})$ can always be written as 
\[c_s\mathcal{E}_{\sum_i a_i}(u\sum_{i}z_i)\]
where $c_s$ is a function determined by $c_{m_i,a_i}$ and the commutation relation \eqref{eq:commrel}. Recall that we always assume that the total $r$-energy is zero. That is why the energy of $G_s(R_{m_1,a_1},\cdots,R_{m_s,a_s})$ is $-\sum_i a_i$ (independent of $r$).
\end{rmk}
\begin{defn}
Let $\mathcal{J}$ be an interaction diagram on the vertices $\{1, ..., n\}$. Let $\mathcal{J}_1, ..., \mathcal{J}_t$ be the connected components of $\mathcal{J}$ ordered by the smallest vertex each component contains. This induces a partition of $\{1, ..., n\}$. We call this partition $P$ the combinatorial type of $\mathcal{J}$.
\end{defn}
Let $P=P_1\sqcup\cdots\sqcup P_t$ be such a partition. It naturally induces a partition of the operators $(\mathcal{O}_1,\cdots,\mathcal{O}_n)$. We use $\mathcal{O}_{P_i}$ to denote the part corresponding to $P_i$. We have the following important lemma about the set $\mathcal{K}(\mathcal{O}_1,\cdots,\mathcal{O}_n)$.

\begin{lem}\label{lem:keylmapp}
Let $\mathcal{J}_1, ..., \mathcal{J}_t$ be the connected components of an interaction diagram $\mathcal{J}$. Then
\begin{equation}\label{eqn:lemapp}
\mathcal{J}\in \mathcal{K}(\mathcal{O}_1,\cdots,\mathcal{O}_n)\iff \mathcal{J}_i\in \mathcal{K}(\mathcal{O}_{P_i})\quad \forall i.
\end{equation}
\end{lem}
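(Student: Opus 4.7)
The approach is to exploit the fact that connected components of an interaction diagram are topologically disjoint: any edge $(a,b)$ lies entirely within one component $\mathcal{J}_i$, so the topological eligibility condition (smallest vertex with exactly one outgoing and no incoming edges) evolves independently within each component. I would first establish a \emph{decoupling lemma}: running the algorithm of $L(\mathcal{J})$, the subsequence of vertices processed that lie in $\mathcal{J}_i$ coincides exactly with the sequence produced by the standalone algorithm for $L(\mathcal{J}_i)$. This is because at each global step the smallest globally eligible vertex is chosen, and when this vertex happens to lie in $\mathcal{J}_i$ it is simultaneously the smallest eligible vertex in $\mathcal{J}_i$'s current state (any smaller eligible vertex in $\mathcal{J}_i$ would be smaller and eligible globally, contradicting the global choice). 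As a consequence, the $r$-energy of any vertex $v \in \mathcal{J}_i$ at global step $k$ agrees with its $r$-energy in the standalone $\mathcal{J}_i$-algorithm after the appropriate number of steps, since the $r$-energy of $v$ is altered only by contractions of edges incident to $v$, all of which lie in $\mathcal{J}_i$.

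For the forward implication, assume $\mathcal{J}$ is valid. By decoupling, each step of the standalone algorithm on $\mathcal{J}_i$ corresponds to a global step that processes a vertex of $\mathcal{J}_i$. The global choice at that step, being the first factor with positive $r$-energy among all remaining factors (Definition \ref{def:valstepapp}(b)), is \emph{a fortiori} the first factor with positive $r$-energy among the remaining $\mathcal{J}_i$-vertices, yielding condition (b) for the standalone step. The terminal condition that all vertices have $r$-energy zero passes to each component trivially.

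The reverse implication is the main obstacle. Suppose each $\mathcal{J}_i$ is standalone valid; I must verify that every global step is valid. The delicate point is ruling out the possibility of a vertex $v < a$ in a component $\mathcal{J}_j \neq \mathcal{J}_i$ (where $a \in \mathcal{J}_i$ is the vertex chosen by the global algorithm) with positive $r$-energy. If such $v$ existed, standalone validity of $\mathcal{J}_j$ at its corresponding step would produce a smallest vertex $u \leq v$ of $\mathcal{J}_j$ with positive $r$-energy, and $u$ would be topologically eligible in $\mathcal{J}_j$'s current state. Since eligibility in $\mathcal{J}_j$ coincides with global eligibility for $\mathcal{J}_j$-vertices, $u < a$ would be globally eligible, contradicting the global algorithm's selection of $a$. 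A short separate check covers the subcase where $\mathcal{J}_j$'s standalone algorithm has already terminated, using that the standalone terminal condition forces the remaining roots of $\mathcal{J}_j$ to have $r$-energy zero. Condition (a) is automatic: since $a$ has an outgoing edge $(a,b)$ with $b < a$, the vertex $a$ is never the leftmost remaining factor, and any strictly smaller remaining vertex has non-positive $r$-energy by the argument just given. Finally, the global terminal condition follows from decoupling together with the per-component terminal conditions.
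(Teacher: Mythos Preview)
Your proposal is correct and follows essentially the same approach as the paper's proof. Your ``decoupling lemma'' is exactly the observation the paper invokes when it says that ignoring the non-$\mathcal{J}_i$ steps of $L(\mathcal{J})$ isolates the steps of $L(\mathcal{J}_i)$ in order; your contradiction argument for the backward direction is the same as the paper's use of $v=\min\{v_1,\dots,v_t\}$ together with per-component validity, and your explicit handling of already-terminated components fills in a case the paper leaves implicit.
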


\begin{proof}
Let us first show $"\Rightarrow"$: By Lemma \ref{lem:valapp}, we only need to show that each $\mathcal{J}_i$ is valid. Since $\mathcal{J}_i$ is connected, we know that at the end of the algorithm of $L(\mathcal{J}_i)$, there is only one vertex. And this vertex corresponds to an operator with $r$-energy 0 since $\mathcal{J}$ is valid. We now show each step in the algorithm of $L(\mathcal{J}_i)$ is valid. If we go through every step of the algorithm of $L(\mathcal{J})$ and we ignore every step which does not use a vertex from $\mathcal{J}_i$ we exactly isolate the steps of $L(\mathcal{J}_i)$ in the right order. So we assume that we are at an arbitrary step in the algorithm of $L(\mathcal{J}_i)$. As a valid step in $L(\mathcal{J})$, we know that it is a step in $L(\mathcal{J})$ that satisfies (b) of Definition \ref{def:valstepapp}. So it must also be a step in $L(\mathcal{J}_i)$ satisfying (b). By Remark \ref{rmk:valstepapp}, this arbitrary step is a valid step for $L(\mathcal{J}_i)$.

Next, we show $"\Leftarrow"$: Because each connected component $\mathcal{J}_i$ of $\mathcal{J}$ is valid, we know that at the end of the algorithm of $L(\mathcal{J})$, each vertex must correspond to an operator with zero $r$-energy. We are left to show each step in the algorithm of $L(\mathcal{J})$ is valid. Let us assume we are at an arbitrary step in the algorithm of $L(\mathcal{J})$. Let us call our interaction diagram at this step $\mathcal{J}'$. The connected components $\mathcal{J}'_i$ for $i=1, ..., t$ are the connected components of $\mathcal{J}'$ ordered by the smallest vertex each component contains. Let $v$ be the smallest vertex of $\mathcal{J}'$ which has exactly one outgoing edge and no incoming edges. Let $v_i$ be the smallest vertex with the same property for $\mathcal{J}'_i$ for $i=1, ..., t$. It is clear that we have:
\begin{equation}\label{eqn:midapp}
    v=\min \{v_1, ..., v_t\}.
\end{equation}
We know that each $\mathcal{J}'_i$ is valid for the appropriate operators for $i=1, ..., t$. This means the first step of the algorithm of $L(\mathcal{J}'_i)$ satisfies (b) of Definition \ref{def:valstepapp} for $i=1, ...,t$. This means that $v_i$ is the smallest vertex of $\mathcal{J}'_i$ whose corresponding operator has positive $r$-energy for $i=1, ..., t$. By Equation \eqref{eqn:midapp}, we know that the first step of $L(\mathcal{J}')$ also satisfies (b). By Remark \ref{rmk:valstepapp}, we know that each step of $\mathcal{J}$ is valid.
\end{proof}

Note that we could easily express $L(\mathcal{J})$ in terms of those connected ones $L(\mathcal{J}_i)$:
\begin{equation}\label{eqn:disconn2conn}
    L(\mathcal{J})=L(\mathcal{J}_1)\cdot ... \cdot L(\mathcal{J}_t).
\end{equation}
We are ready to prove Lemma \ref{lem:basic}.
\begin{proof}[Proof of Lemma \ref{lem:basic}]
By the analysis in the beginning of this Appendix, we know that for sufficiently large $r$,
\begin{equation}\label{eqn:1app}
\langle L_1 \prod_{i=1}^n \boldsymbol{A}_{a_i}(z_i,u,t)  L_2\rangle =\sum_{\sum m_i=0}\langle L_1 F_n(R_{m_1,a_1},\cdots,R_{m_n,a_n})  L_2\rangle.
\end{equation}
So we only need to analysis the operator
\begin{equation*}
\sum_{\sum m_i=0} F_n(R_{m_1,a_1},\cdots,R_{m_n,a_n})
\end{equation*}
By \eqref{eqn:graphsumapp}, it equals to
\begin{equation}\label{eqn:midtermapp}
\sum_{\sum m_i=0}\;\sum_{ \mathcal{J}\in \mathcal{K}(R_{m_1,a_1},\cdots,R_{m_n,a_n})}L(\mathcal{J}).
\end{equation}
Each interaction diagram $\mathcal{J}$ with a fixed combinatorial type $P=P_1\sqcup\cdots\sqcup P_{l(P)}$ can be decomposed into connected components $\mathcal{J}_1,\cdots,\mathcal{J}_{l(P)}$. It naturally induces two partitions:
\[\vec{m}_{P_1}\sqcup\cdots\sqcup\vec{m}_{P_{l(P)}}=\{m_1,\cdots,m_n\},\quad\vec{a}_{P_1}\sqcup\cdots\sqcup\vec{a}_{P_{l(P)}}=\{a_1,\cdots,a_n\}\]
and a partition of operators
\[R_{\vec{m}_{P_1},\vec{a}_{P_1}}\sqcup\cdots\sqcup R_{\vec{m}_{P_{l(P)}},\vec{a}_{P_{l(P)}}}=\{R_{m_1,a_1},\cdots,R_{m_n,a_n}\}.\]
Then we may further write \eqref{eqn:midtermapp} as
\[\sum_{\sum m_i=0}\;\sum_{P=P_1\sqcup\cdots\sqcup P_{l(P)}}\sum_{\mathcal{J}_i\in\mathcal{K}(R_{\vec{m}_{P_i},\vec{a}_{P_i}})\atop \mathcal{J}_i\,\mathrm{ connected}} L(\mathcal{J}_1)\cdots L(\mathcal{J}_{l(P)}).\]
Here we have used the equation \eqref{eqn:disconn2conn} and Lemma \ref{lem:keylmapp}. By exchanging the summation order, it can be further written as
\[\sum_{P=P_1\sqcup\cdots\sqcup P_{l(P)}}\prod_{i=1}^{l(P)}\left(\sum_{|\vec{m}_{P_i}|=0}\sum_{\mathcal{J}_i\in\mathcal{K}(R_{\vec{m}_{P_i},\vec{a}_{P_i}})\atop \mathcal{J}_i\,\mathrm{ connected}}L(\mathcal{J}_i)\right).\]
Each factor in the product equals to
\[\sum_{|\vec{m}_{P_i}|=0} G_{l(P_i)}(R_{\vec{m}_{P_i},\vec{a}_{P_i}})\]
by Definition \ref{def:G-fun}. By Remark \ref{rmk:Gfunapp}, it can be written in the following form:
\[f_{\vec{a}_{P_i}}(z_{P_i},u,t,r)\mathcal{E}_{|\vec{a}_{P_i}|}(u|z_{P_i}|)\]
where $f_{\vec{a}_{P_i}}(z_{P_i},u,t,r)$ is a function indexed by the ordered set $\vec{a}_{P_i}$.

From the above discussion, we may conclude that
\begin{equation*}
\langle L_1 \prod_{i=1}^n\boldsymbol{A}_{\vec{a}} L_2\rangle=\sum_{P}\left \langle L_1  \left(\prod_{i=1}^{l(P)}f_{\vec{a}_{P_i}}(z_{P_i},u,t,r)\mathcal{E}_{|\vec{a}_{P_i}|}(u|z_{P_i}|)\right)  L_2\right\rangle
\end{equation*}
for sufficiently large $r$ and these functions $f_{\vec{a}_{P_i}}$ can be determined by the equation
\begin{equation}\label{eqn:unifunapp}
f_{\vec{a}_{P_i}}(z_{P_i},u,t,r)\mathcal{E}_{|\vec{a}_{P_i}|}(u|z_{P_i}|)=\sum_{|\vec{m}_{P_i}|=0} G_{l(P_i)}(R_{\vec{m}_{P_i},\vec{a}_{P_i}}).
\end{equation}
Obviously, these functions $f_{\vec{a}_{P_i}}$ do not depend on the operators $L_1,L_2$ but only on the ordered set $\vec{a}_{P_i}$.

\end{proof}

\section{A computation using WDVV equation}\label{sec:wdvvcal}
In this appendix, we will use the WDVV equation in \cite[Proposition 7.5]{FWY} to calculate relative invariants of $(\PP^1,0\cup \infty)$ of a certain type. Such invariants will be used in the computation of Section \ref{sec:opfm}.

To state the WDVV equation in our case, we need some preparation. First, we choose the basis $\{1, H\}$ for $H^*(\mathbb{P}^1)$ where $H$ is the hyperplane class. As for $0 \cup \infty$, we use $\boldsymbol{0}$ (resp. $ \boldsymbol{\infty}$) to denote the identity class of $H^*(0)$ (resp. $H^*(\infty)$). Let $\mathfrak{H}_0=H^*(\mathbb{P}^1)$ and $\mathfrak{H}_i=H^*(0\cup \infty)=H^*(0)\oplus H^*(\infty)$
for $i\neq 0$. The space of primary insertions for relative invariants of $(\PP^1,0\cup \infty)$ is 
\[\mathfrak{H}\coloneqq \bigoplus_{i\in\Z}\mathfrak{H}_i.\]
For an element $\alpha\in \mathfrak{H}_i$, we denote by $[\alpha]_i$ the image in $\mathfrak{H}$ via the obvious embedding. There is a natural basis for $\mathfrak{H}$:
\begin{align*}
    \tilde{T}_{0,0}&\coloneqq [1]_0,\\
    \tilde{T}_{0,1}&\coloneqq [H]_0,\\
    \tilde{T}_{i, 0}&\coloneqq [\boldsymbol{0}]_i,\; i\neq 0,\\
    \tilde{T}_{i, \infty}&\coloneqq [\boldsymbol{\infty}]_i,\; i\neq 0
\end{align*}
and a natural pairing:
 \begin{equation*}
        ([\alpha]_i, [\beta]_j)=\begin{cases}
        0,\text{  if } i+j\neq 0,\\
        \int_{\PP^1} \alpha \cup \beta,\text{  if } i=j=0,\\
        \int_{0\cup\infty} \alpha \cup \beta,\text{  if } i+j=0,\; i, j \neq 0.
        \end{cases}
    \end{equation*}
Let $\{\tilde{T}_{i,k}^{\vee}\}$ be the dual basis of $\{\tilde{T}_{i, k}\}$ under the above pairing. More precisely, we have
\begin{align*}
    \tilde{T}_{0,0}^{\vee}&\coloneqq [H]_0,\\
    \tilde{T}_{0,1}^{\vee}&\coloneqq [1]_0,\\
    \tilde{T}_{i, 0}^{\vee}&\coloneqq [\boldsymbol{0}]_{-i},\; i\neq 0,\\
    \tilde{T}_{i, \infty}^{\vee}&\coloneqq [\boldsymbol{\infty}]_{-i},\; i\neq 0.
\end{align*}

Let $[\alpha_1]_{i_1},\cdots,[\alpha_n]_{i_n}\in \{\tilde{T}_{i, k}\}$. We define
\begin{equation}\label{eqn:newepappB}
I_d([\alpha_1]_{i_1},[\alpha_2]_{i_2},\cdots,[\alpha_n]_{i_n})
\end{equation}
to be the following genus zero relative invariants
\begin{equation}\label{eqn:g0appB}
\left\langle\vec{\mu}_0\left|\prod_{s\in S}\tau_0(\alpha_s)\right|\vec{\mu}_{\infty}\right\rangle_{g=0,d}
\end{equation}
where the contact orders and the set of inner markings can be naturally read from  \eqref{eqn:newepappB}:
\[\vec{\mu}_0=\{i_s|\alpha_s=\boldsymbol{0}\},\quad \vec{\mu}_{\infty}=\{i_s|\alpha_s=\boldsymbol{\infty}\},\quad S=\{s|i_s=0\}.\]
Note that
the express \eqref{eqn:g0appB} makes sense only if $\vec{\mu}_0$ and $\vec{\mu}_{\infty}$ are two partitions of $d$. So if this is not the case, we simply set $I_d([\alpha_1]_{i_1},[\alpha_2]_{i_2},\cdots,[\alpha_n]_{i_n})=0$. Another constraint comes from the dimension constraint of \eqref{eqn:g0appB}. So if the degree summation of all the insertions in \eqref{eqn:g0appB} does not match with the virtual dimension of the corresponding genus zero relative moduli space, then we also have $I_d([\alpha_1]_{i_1},[\alpha_2]_{i_2},\cdots,[\alpha_n]_{i_n})=0$.

We are ready to state the WDVV equation.
\begin{prop}[\cite{FWY}]\label{prop:wdvv}
\begin{align*}
    &\sum I_{d_1}([\alpha_{1}]_{i_1}, [\alpha_{2}]_{i_2}, \prod_{j\in S_1} [\alpha_{j}]_{i_j}, \tilde{T}_{i,k}) I_{d_2}(\tilde{T}_{i,k}^{\vee}, [\alpha_{3}]_{i_3}, [\alpha_{4}]_{i_4}, \prod_{j\in S_2} [\alpha_{j}]_{i_j})\\
    =&\sum I_{d_1}([\alpha_{1}]_{i_1}, [\alpha_{3}]_{i_3}, \prod_{j\in S_1}[\alpha_{j}]_{i_j}, \tilde{T}_{i,k}) I_{d_2}(\tilde{T}_{i,k}^{\vee}, [\alpha_{2}]_{i_2}, [\alpha_{4}]_{i_4}, \prod_{j\in S_2} [\alpha_{j}]_{i_j}),
\end{align*}
where each sum takes over all $d_1+d_2=d$, all indices $i,k$ of basis, and $S_1,S_2$ disjoint sets with $S_1 \cup S_2=\{5, ..., n\}$. Also, the $\prod$ symbol makes each factor as a separate insertion, instead of multiplying them up.
\end{prop}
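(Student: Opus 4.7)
The plan is to derive this WDVV equation by the standard strategy of pulling back the linear equivalence of boundary divisors on $\bM_{0,4}$, carried out in the orbifold GW-theory of $\cC_{r,s}$ for large $r,s$ and then passing to the relative limit. This mirrors the strategy used throughout the paper: lift a relative statement to an orbifold one, apply the standard tools of ordinary GW-theory, and extract the $r^0 s^0$-coefficient.

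I would first fix $r,s$ larger than all $|i_j|$ appearing and translate each insertion $[\alpha_j]_{i_j}$ into the corresponding orbifold insertion on $\cC_{r,s}$: $[1]_0, [H]_0$ remain as untwisted classes, $[\boldsymbol{0}]_i$ becomes the identity of the twisted sector over $0$ of age $i/r$ (if $i>0$) or $(r+i)/r$ (if $i<0$), and analogously for $[\boldsymbol{\infty}]_j$. Under Poincar\'e duality on $H^*_{CR}(\cC_{r,s})$, the sector over $0$ of age $a/r$ is dual to the sector over $0$ of age $(r-a)/r$; a direct inspection of the four cases shows $[\boldsymbol{0}]_i^{\vee} = [\boldsymbol{0}]_{-i}$ and $[\boldsymbol{\infty}]_i^{\vee} = [\boldsymbol{\infty}]_{-i}$, recovering precisely the dual basis $\tilde{T}_{i,k}^\vee$ stated before the proposition.

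Next I would apply the standard WDVV relation for orbifold GW-invariants of $\cC_{r,s}$: pulling back the class equality $[D_{12|34}] = [D_{13|24}]$ on $\bM_{0,4}$ along the stabilization map and applying the splitting axiom at each boundary divisor gives an identity of 4-point invariants in which the intermediate marking is summed over a basis of $H^*_{CR}(\cC_{r,s})$ paired using Poincar\'e duality. Distributing the factor $r^{\rho^0_-}s^{\rho^{\infty}_-}$ correctly between the two 3-point factors on each side (one power of $r$ for each negative relative marking over $0$, including at most one extra contributed by a negative-age intermediate insertion, and symmetrically for $s$) and invoking the polynomial-in-$r,s$ result of \cite{FWY2}, one finds that both sides are polynomials in $r,s$ whose $r^0 s^0$-coefficients I would then read off.

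The main obstacle is that the orbifold splitting sum is a priori unbounded as $r,s\to\infty$, since twisted sectors of age $a/r$ range over all $0 \leq a \leq r-1$. The key observation is that only those sectors whose labels stay bounded as $r$ grows can contribute to the $r^0 s^0$-coefficient: the remaining sectors either fail the virtual dimension constraint or appear with extra powers of $r$ that force their constant terms to vanish. Once this truncation is justified, the surviving contributions are exactly the pairs $(\tilde{T}_{i,k},\tilde{T}_{i,k}^\vee)$ of the claimed basis, and the identity of Proposition~\ref{prop:wdvv} follows.
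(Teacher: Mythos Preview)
The paper does not give its own proof of this proposition: it is quoted verbatim from \cite[Proposition 7.5]{FWY} and used as a black box in Appendix~\ref{sec:wdvvcal}. So there is no ``paper's proof'' to compare against; the relevant comparison is with the argument in \cite{FWY}.

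Your outline is the right strategy and is essentially what \cite{FWY} does: establish WDVV for the orbifold theory of $\cC_{r,s}$ and pass to the limit. But two of your steps are asserted rather than argued, and they are exactly where the work lies. First, the orbifold Poincar\'e pairing on a twisted sector over $0$ carries a factor of $1/r$ (the gerbe weight), so the diagonal class in the splitting axiom contributes an extra $r$ per twisted node; you need to check that this $r$ is precisely absorbed by the $r^{\rho^0_-}$ prefactor when the node corresponds to a negative contact on one side, and that no stray power survives. Your sentence ``distributing the factor \ldots\ correctly'' hides this bookkeeping. Second, your truncation claim---that only sectors with bounded age contribute to the $r^0 s^0$-coefficient---is stated without proof. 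The actual mechanism is not a dimension count but the monodromy/degree constraint: once $d_1+d_2=d$ and the ages of the external markings are fixed, the age at the node is forced, and it lies in a range bounded by $d$ independent of $r$. This is elementary but must be said; ``the remaining sectors either fail the virtual dimension constraint or appear with extra powers of $r$'' is not the correct reason.

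In short: the approach is sound, but as written it is a plan rather than a proof, and the two points above are where a referee (or \cite{FWY}) would demand detail.
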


Next, using the above WDVV equation, we can compute relative invariants in the following form:
\begin{equation}\label{eqn:toBdetappB}
\left\langle\vec{\mu}_0|\emptyset|\vec{\mu}_{\infty}\right\rangle
\end{equation}
where $\vec{\mu}_0=(a, b_1, ..., b_n)$ and $\vec{\mu}_\infty=(d)$ are two partitions of $d>0$ such that $b_1, ..., b_n$ are negative. We can rewrite \eqref{eqn:toBdetappB} as
\[I_d([\boldsymbol{0}]_a,[\boldsymbol{0}]_{b_1},\cdots,[\boldsymbol{0}]_{b_n},[\boldsymbol{\infty}]_{d}).\]

We have the following explicit formula.
\begin{prop}
\begin{equation}\label{eqn:fmappB}
I_d([\boldsymbol{0}]_a,[\boldsymbol{0}]_{b_1},\cdots,[\boldsymbol{0}]_{b_n},[\boldsymbol{\infty}]_{d})=d^{n-1}.
\end{equation}
\end{prop}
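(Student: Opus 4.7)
I proceed by induction on $n$, the number of negative entries in $\vec{\mu}_0$. The base case $n = 0$ gives $\vec{\mu}_0 = (d)$, $\vec{\mu}_\infty = (d)$, and the two-point invariant $I_d([\boldsymbol{0}]_d, [\boldsymbol{\infty}]_d) = 1/d = d^{-1}$, which is the classical ``tube with one positive contact on each side'' computation (the factor $1/d$ coming from the automorphism of the $d$-fold cover of $\mathbb{P}^1$), obtainable either by a direct moduli analysis or via the orbifold-limit definition.

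For the inductive step, I apply Proposition~\ref{prop:wdvv} with four distinguished insertions
\[\alpha_1 = [\boldsymbol{0}]_a,\quad \alpha_2 = [\boldsymbol{0}]_{b_n},\quad \alpha_3 = [\boldsymbol{\infty}]_d,\quad \alpha_4 = [\boldsymbol{0}]_{b_{n-1}},\]
and the remaining insertions $[\boldsymbol{0}]_{b_1}, \dots, [\boldsymbol{0}]_{b_{n-2}}$ distributed over $S_1 \sqcup S_2$. (For $n = 1$ and $n = 2$ the choice degenerates slightly and is handled directly.) Swapping $\alpha_2$ and $\alpha_3$ produces a sum over the partitions $S_1 \sqcup S_2$ and over intermediate classes $T \in \{\tilde{T}_{i,k}\}$. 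For each $T$ the contact-balance constraints on each factor (the sum of $0$-contacts equals $d_i$ which also equals the sum of $\infty$-contacts) pin down $d_1, d_2$ and the allowed subscript of $T$ uniquely; the interior classes $T = [1]_0, [H]_0$ are easily ruled out, since they would force a factor with no relative markings at $\infty$ but with $d_i > 0$, which is impossible. Hence only $T = [\boldsymbol{0}]_i$ and $T = [\boldsymbol{\infty}]_j$ can survive.

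The surviving factors fall into two classes: (i) invariants of the same form as $I_{d'}$ but with strictly fewer negative entries, which are known by induction; and (ii) degree-zero constant-map invariants supported entirely at $0$. Contributions of type (ii) are evaluated by passing to the orbifold limit, where they become integrals over $\bM_{0,k}$ of Hurwitz--Hodge type classes and reduce to explicit rational factors in the contact data. Substituting these into the WDVV identity, applying the induction hypothesis to the type-(i) factors, and using the relation $a + \sum_i b_i = d$, the WDVV identity becomes a polynomial equation in $a, b_1, \dots, b_n$ that, after cancellation, forces $I_d = d^{n-1}$.

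The main obstacle is the combinatorial bookkeeping of the surviving WDVV terms together with the explicit evaluation of the type-(ii) constant-map contributions. A cleaner variant would choose the distinguished insertions so that $[\boldsymbol{0}]_a$ and $[\boldsymbol{\infty}]_d$ land on opposite sides of every surviving splitting; then both factors in every term would carry a positive contact at $0$ and at $\infty$, every factor would be of inductive type, and no type-(ii) constant-map invariant would appear. If such a configuration is achievable, the argument reduces to a pure recursion that telescopes by induction to $d^{n-1}$, which is the cleanest route.
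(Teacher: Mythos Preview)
Your outline has the right skeleton (induction on $n$, base case $n=0$, WDVV for the step), but the inductive step as written has a real gap.

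With your choice of distinguished insertions $\alpha_1=[\boldsymbol{0}]_a$, $\alpha_2=[\boldsymbol{0}]_{b_n}$, $\alpha_3=[\boldsymbol{\infty}]_d$, $\alpha_4=[\boldsymbol{0}]_{b_{n-1}}$, the only $\infty$-insertion available is $[\boldsymbol{\infty}]_d$, and it sits on a fixed side of each splitting. Running through the contact-balance constraints, on both sides of the WDVV identity the surviving intermediate class is $T=[\boldsymbol{0}]_c$ with $c<0$, and one of the two factors is always a \emph{degree-zero} invariant of the form
\[
I_0\bigl([\boldsymbol{0}]_{c'},[\boldsymbol{0}]_{b_{j_1}},\dots,[\boldsymbol{0}]_{b_{j_k}}\bigr),\qquad c'>0,\ b_{j_l}<0,\ c'+\textstyle\sum_l b_{j_l}=0,
\]
with $k\geq 2$ markings. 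These are precisely your ``type (ii)'' terms. They are \emph{not} zero in general (already the three-point ones are the structure constants of the relative quantum product in \cite{FWY}), they are not covered by the induction hypothesis, and you do not compute them. Your claim that they ``reduce to explicit rational factors'' is exactly the content that is missing; evaluating them is essentially as hard as the proposition itself. Your final paragraph concedes this and only conjectures that a better configuration might exist. Separately, your setup requires at least four insertions from the original invariant, so $n=1$ (three insertions) cannot be fed into WDVV at all and must be computed by other means, which you also do not do.

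The paper avoids all of this by \emph{enlarging} the set of insertions before applying WDVV: it adjoins the divisor class $[H]_0$ and an auxiliary \emph{positive} $\infty$-contact $[\boldsymbol{\infty}]_{-b_{m+1}}$, then runs WDVV on
\[
[\boldsymbol{0}]_a,\ [\boldsymbol{\infty}]_d,\ [H]_0,\ [\boldsymbol{\infty}]_{-b_{m+1}},\ [\boldsymbol{0}]_{b_1},\dots,[\boldsymbol{0}]_{b_m}.
\]
With this choice every surviving factor either (i) carries a single positive $\infty$-contact and strictly fewer negative $0$-contacts, hence is of inductive type, or (ii) is a two- or three-point degree-zero invariant that is known explicitly (\cite[Example~5.5]{FWY}). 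The extra $[H]_0$ is removed by the divisor equation, and the resulting identity is
\[
I_d([\boldsymbol{0}]_a,[\boldsymbol{0}]_{b_1},\dots,[\boldsymbol{0}]_{b_{m+1}},[\boldsymbol{\infty}]_d)
\;+\;\sum_{k>0}\binom{m}{k}d^{\,m-k}(-b_{m+1})^{k}
\;=\;(d-b_{m+1})^{m},
\]
which by the binomial theorem gives $I_d=d^{m}$. The auxiliary insertions are exactly the ``cleaner variant'' you were hoping for; without them the recursion does not close.
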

\begin{proof}
We will prove it by induction on $n$. If $n=0$, then $a=d$. In this case, the corresponding genus zero relative moduli space has only one point with automorphism group $\mu_d$. So
\[I_d([\boldsymbol{0}]_d,[\boldsymbol{\infty}]_d)=d^{-1}.\]
Let us assume that \eqref{eqn:fmappB} holds for all $n\leq m$. We need to show that \eqref{eqn:fmappB} also holds for $n=m+1$. We will
use the WDVV equation of Proposition \ref{prop:wdvv} with the following $m+4$ insertions:
\[[\boldsymbol{0}]_a,[\boldsymbol{\infty}]_{d},[H]_0,[\boldsymbol{\infty}]_{-b_{k+1}},[\boldsymbol{0}]_{b_1},[\boldsymbol{0}]_{b_2},\cdots,[\boldsymbol{0}]_{b_m}.\]
In this case, the WDVV equation becomes
\begin{align*}
    &\sum I_{d_1}([\boldsymbol{0}]_{a}, [\boldsymbol{\infty}]_{d}, \prod_{j\in S_1} [\boldsymbol{0}]_{b_j}, \tilde{T}_{i,k}) I_{d_2}(\tilde{T}_{i,k}^{\vee}, [H]_0, [\boldsymbol{\infty}]_{-b_{m+1}}, \prod_{j\in S_2} [\boldsymbol{0}]_{b_j})\\
    =&\sum I_{d_1}([\boldsymbol{0}]_{a}, [H]_{0}, \prod_{j\in S_1}[\boldsymbol{0}]_{b_j}, \tilde{T}_{i,k}) I_{d_2}(\tilde{T}_{i,k}^{\vee}, [\boldsymbol{\infty}]_{d}, [\boldsymbol{\infty}]_{-b_{m+1}}, \prod_{j\in S_2} [\boldsymbol{0}]_{b_j})
\end{align*}
where each sum takes over all $d_1+d_2=d-b_{m+1}$, all indices $i,k$ of basis, and $S_1,S_2$ disjoint sets with $S_1 \cup S_2=\{1, ..., m\}$. Let us first analyse the LHS of the equation.

If $S_2$ is not empty, then in order to make the total sum of contact orders over $0$ in $I_{d_2}$ non-negative, $\tilde{T}_{i,k}^{\vee}$ must be of the form $[\boldsymbol{0}]_{c}$ with $c>0$. Then using the fact that the total sum of contact orders over $0$ and $\infty$ are the same, we may deduce that $c=-\sum_{j\in S_2}b_j-b_{m+1}$. So $\tilde{T}_{i,k}=[\boldsymbol{0}]_{-c}$ and $d_1=d$, $d_2=-b_{m+1}$. Then by induction and divisor equation, we have
\begin{align*}
I_{d_1}([\boldsymbol{0}]_{a}, [\boldsymbol{\infty}]_{d}, \prod_{j\in S_1} [\boldsymbol{0}]_{b_j}, [\boldsymbol{0}]_{-c})&=d^{|S_1|},\\
I_{d_2}([\boldsymbol{0}]_c, [H]_0, [\boldsymbol{\infty}]_{-b_{m+1}}, \prod_{j\in S_2} [\boldsymbol{0}]_{b_j})&=(-b_{m+1})^{|S_2|}
\end{align*}
where $|S_1|$ and $|S_2|$ are the numbers of elements in $S_1$ and $S_2$. The different ways of choosing the sets $S_1,S_2$ give the combinatorial number $m\choose |S_2|$. So when $S_2$ is not empty, the total contribution to the LHS of WDVV equation is
\[\sum_{|S_2|>0}{m\choose |S_2|}d^{m-|S_2|}(-b_{m+1})^{|S_2|}.\]
Here we have used the fact that $|S_1|+|S_2|=m$.

If $S_2$ is empty, we could similarly deduce that $\tilde{T}_{i,k}^{\vee}$ must be $[\boldsymbol{\infty}]_{b_{m+1}}$ or $[\boldsymbol{0}]_{-b_{m+1}}$. If $\tilde{T}_{i,k}^{\vee}=[\boldsymbol{\infty}]_{b_{m+1}}$, then $I_{d_2}=0$ by dimension reason. If $\tilde{T}_{i,k}^{\vee}=[\boldsymbol{0}]_{-b_{m+1}}$, then the corresponding term on the LHS of WDVV equation becomes
\[I_d([\boldsymbol{0}]_a,[\boldsymbol{0}]_{b_1},\cdots,[\boldsymbol{0}]_{b_{m+1}},[\boldsymbol{\infty}]_{d})I_{d_{m+1}}([\boldsymbol{0}]_{-b_{m+1}},[H]_0, [\boldsymbol{\infty}]_{-b_{m+1}}).\]
By divisor equation, the second term $I_{d_{m+1}}$ in the above product is $1$.
So we may conclude that the total sum on the LHS of WDVV equation is
\begin{equation}\label{eqn:LHSappB}
I_d([\boldsymbol{0}]_a,[\boldsymbol{0}]_{b_1},\cdots,[\boldsymbol{0}]_{b_{m+1}},[\boldsymbol{\infty}]_{d})+\sum_{|S_2|>0}{m\choose |S_2|}d^{m-|S_2|}(-b_{m+1})^{|S_2|}.
\end{equation}

A similar analysis to the RHS of WDVV equation also shows that 
only one term survives in the total summation:
\[I_{d-b_{m+1}}([\boldsymbol{0}]_{a}, [H]_{0},[\boldsymbol{0}]_{b_1},\cdots,[\boldsymbol{0}]_{b_{m}},[\boldsymbol{\infty}]_{d-b_{m+1}})I_0([\boldsymbol{\infty}]_{-d+b_{m+1}},[\boldsymbol{\infty}]_{d},[\boldsymbol{\infty}]_{-b_{m+1}}).\]
The second term in the above product equals to $1$ since there is only one negative insertion (see \cite[Example 5.5]{FWY}). As for the first term, it equals to 
\begin{equation}\label{eqn:RHSappB}
(d-b_{m+1})^m
\end{equation}
by induction and divisor equation. Now LHS=RHS becomes \eqref{eqn:LHSappB}=\eqref{eqn:RHSappB}. So
\[I_d([\boldsymbol{0}]_a,[\boldsymbol{0}]_{b_1},\cdots,[\boldsymbol{0}]_{b_{m+1}},[\boldsymbol{\infty}]_{d})=d^m.\]

\end{proof}

\bibliography{universal-BIB}
\bibliographystyle{amsxport}
\end{document}